\newcommand{\auteur}[4]{\large #1\footnote{#2 ({\tt #3}).#4}}
\newcommand{\michele}%
	{\auteur{Michele Conforti}%
		{Dipartimento di Matematica, Universit\`a degli Studi di Padova,
			Via Trieste 63, 35121 Padova, Italy}%
		{conforti@math.unipd.it}%
		{ Supported by ``Progetto di Eccellenza 2008--2009''
			of ``Fondazione Cassa di Risparmio di Padova e Rovigo''.}}
\newcommand{\bert}%
        {\auteur{Bert Gerards}%
		{Centrum Wiskunde \& Informatica, Amsterdam, The Netherlands}%
		{bert@gerardsbase.nl}%
		{}}
\newcommand{\kostja}%
        {\auteur{Kanstantsin Pashkovich}%
		{D\'epartement de Math\'ematique, Universit\'e Libre de Bruxelles,
			Boulevard du Triomphe, B-1050 Brussels, Belgium.}%
			{kanstantsin.pashkovich@gmail.com}%
			{ Research partly done at: Dipartimento di Matematica,
				Universit\`a degli Studi di Padova, Italy.}}
\newtheorem{theorem}{Theorem}[section]
\newtheorem{lemma}[theorem]{Lemma}
\newtheorem{corollary}[theorem]{Corollary}
\newtheorem{fact}[theorem]{Fact}
\newcommand{\initial}[1]{#1^\circ}
\newcommand{\coarse}[1]{#1_{\mbox{\scriptsize\sf coarse}}}
\newcommand{\bardd}[1]{\bar{d}(#1)}
\newcommand{\dd}[1]{\bar{d}(#1)}
	\newcommand{\sm}{\backslash}
	\newcommand{\pol}[1]{{\sf P}[#1]}
	\newcommand{\neighbor}[2]{\mbox{\sf N}_{#1}{(#2)}}
	\newcommand{\buur}[2]{\mbox{\sf B}_{#1}{(#2)}}
	\newcommand{\stable}[1]{{\sf S}[#1]}
        \newcommand{\alphabound}{\alpha_{\sf bound}}
	\newcommand{\lazy}[1]{\widehat{#1}}
	\newcommand{\lazynode}[1]{\scalebox{0.8}{$\widehat{#1}$}}
        \newcommand{\lazynodeA}[2]{\lazynode{#2}_{#1}}
	\newcommand{\record}[1]{#1^{\mbox{\tiny\sf record}}}
	\newcommand{\recordnode}[1]{\powernode{#1}}
	\newcommand{\lift}[1]{\record{#1}\sm \{\recordnode{\emptyset}\}}
	\newcommand{\liftnode}[1]{\powernode{#1}}
	\newcommand{\gaA}[1]{\mathcal A_{#1}}
	\newcommand{\power}[1]{#1^{\mbox{\tiny\sf power}}}
	\newcommand{\powernode}[1]{r_{#1}}
	\newcommand{\connect}[1]{#1^{\mbox{\tiny\sf connect}}}
	\newcommand{\Hom}[1]{\operatorname{\sf hom}_#1}
	\newcommand{\Correct}{\operatorname{\sf correct}}
	\newcommand{\correct}[1]{\Correct_{#1}}
	\newcommand{\Service}{\operatorname{\sf service}}
	\newcommand{\load}[1]{\operatorname{\sf load}#1}
	\newcommand{\Load}[1]{\operatorname{\sf load}(#1_{> 0})}
	\newcommand{\pos}[1]{|#1_{>0}|}
	\newcommand{\rootcount}[1]{\operatorname{\sf root-size}(#1_{> 0})}
        \newcommand{\rootservant}{(G_2,\lazy{\mathcal U})}%
        \newcommand{\rootindmaster}{(G_1, Z)}%
\newcommand{\R}{\mathbb R}  \newcommand{\Q}{\mathbb Q}
\newcommand{\cA}{\mathcal A} \newcommand{\cB}{\mathcal B} \newcommand{\cC}{\mathcal C}
\newcommand{\cG}{\mathcal G} \newcommand{\cH}{\mathcal H} 
  \newcommand{\cL}{\mathcal L}
\newcommand{\cP}{\mathcal P} \newcommand{\cQ}{\mathcal Q} \newcommand{\cR}{\mathcal R}
  \newcommand{\cU}{\mathcal U}
 \newcommand{\cW}{\mathcal W} \newcommand{\cX}{\mathcal X}
\begin{document}
\thispagestyle{empty}
\begin{center}\LARGE Stable Sets and Graphs with no Even Holes
\\[\baselineskip] \michele, \bert, \kostja \\[\baselineskip] \today
\end{center}
\begin{abstract}
We develop decomposition/composition tools
for efficiently solving maximum weight
stable sets problems as well as for describing them as polynomially sized linear
programs (using ``compact systems''). Some of these are well-known but need
some extra work to yield polynomial ``decomposition schemes''.

We apply the tools to graphs with no even hole and no cap. A hole is a
chordless cycle of length greater than three and a cap is a hole together with
an additional node that is adjacent to two
adjacent nodes of the hole and that has no other neighbors on the hole.
\end{abstract}
\section{Introduction}\label{intro}
A vast literature about efficiently solvable cases of the stable set problem
focuses on ``perfect graphs''. Based on the ellipsoid method, Gr\"otschel,
Lov\'asz, and Schrijver  \cite{GLS} have developed a polynomial-time algorithm that
computes a stable set of maximum weight in a perfect graph. Perfect graphs have no
odd holes. (A {\em hole} is a chordless cycle of length greater than three.)
It is conceivable that the stable set problem is polynomially solvable
for all graphs without odd holes, and this may even extend to graphs with all holes having
the same parity, so either all even or all odd.
To our knowledge the case that all holes are odd has not received much
attention and in this paper we take a first step in exploring this topic 
by considering ``cap-free'' graphs with no even holes.
A {\em cap} is a hole together with an additional node that is adjacent to two
adjacent nodes of the hole and that has no other neighbors on the hole.

\begin{theorem}\label{algtheorem}
The stable set problem for cap-free graphs with no even holes is polynomially solvable.
\end{theorem}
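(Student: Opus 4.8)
The plan is to combine a structural decomposition theorem for $(\text{even-hole},\text{cap})$-free graphs with the composition tools announced in the abstract, so that a maximum weight stable set in $G$ can be assembled from those of strictly smaller ``blocks.'' First I would establish (or invoke) a decomposition of the form: every connected $(\text{even-hole},\text{cap})$-free graph $G$ is either \emph{basic}, belonging to a class in which maximum weight stable sets are computable in polynomial time, or else $G$ admits one of a short list of cutsets and joins (a clique cutset, together with a join-type decomposition such as an amalgam or a $2$-join). The forbidden even holes and caps are exactly the obstructions one exploits here: they force the neighborhoods across a minimal cutset to be highly structured (cliques or near-cliques), which is what makes the subsequent composition controllable.

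Next I would treat the base case. For the basic class I would argue that the weighted stable set problem is solvable directly --- either because the basic graphs are perfect, so that the algorithm of Gr\"otschel, Lov\'asz and Schrijver~\cite{GLS} applies, or because they have a sufficiently simple structure to admit an explicit \emph{compact system} (a polynomial-size extended formulation of their stable set polytope). Producing compact systems for the basic blocks is the cleanest route, since it dovetails with the composition step: optimizing a linear weight function over a compact system is a polynomial-size linear program.

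The composition step is where the tools do the work. For a clique cutset $K$ separating $G$ into parts $G_1,\dots,G_t$ (each containing $K$), a maximum weight stable set is obtained by the standard argument that a stable set meets $K$ in at most one vertex, so one solves each $G_i$ with and without the vertices of $K$ and recombines while correcting for the shared clique. For the join-type decompositions I would instead glue compact systems: given extended formulations for the stable set polytopes of the blocks, one couples them across the join and projects out the auxiliary variables, obtaining a compact system for $G$ whose correctness reduces to a polytope identity of the form $\stable{G}=\pi(\,\cdots\,)$ verified directly from the definition of the decomposition.

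The main obstacle is \emph{polynomiality of the whole scheme}, not the correctness of any single step. Naively recursing on cutsets can create exponentially many blocks, and each join composition may introduce auxiliary ``gadget'' vertices or coupling variables; if these accumulate, both a combinatorial recursion and the size of the final compact system blow up. The crux is therefore to organize all decompositions into a single decomposition tree of polynomially bounded total size --- by always choosing extreme or minimal cutsets, by bounding the number of blocks and the gadget overhead per composition by a polynomial in $|V(G)|$, and by carefully bookkeeping which stable sets must be recorded at each node so that a block is never expanded more than polynomially often. This is precisely the ``extra work'' that turns the (partly well-known) decompositions into genuine polynomial \emph{decomposition schemes}, and I expect it to be the technical heart of the proof.
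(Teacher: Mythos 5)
Your high-level architecture (decompose along clique cutsets and amalgam-type joins, solve basic blocks, recombine) is indeed the paper's, but there are two genuine gaps, one of which is exactly the technical heart of the paper. First, the smaller one: your base case is wrong as stated. After applying Theorem~\ref{capdec} the basic objects are triangle-free even-hole-free graphs, and these are \emph{not} perfect (they contain odd holes, e.g.\ $C_5$), so the Gr\"otschel--Lov\'asz--Schrijver algorithm does not apply. The paper instead invokes a second decomposition theorem (Theorem~\ref{tf}, via ``good ear additions'') reducing these graphs to the cube and to fan-templates, whose record graphs fall apart along $2$-node clique cutsets into pieces of at most $8$ nodes (Lemmas~\ref{zipfan} and~\ref{lem2gen}); nothing in your sketch supplies a substitute for this second layer.

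Second, and more seriously: your proposed polynomiality mechanism --- ``always choosing extreme or minimal cutsets'' plus bookkeeping --- is precisely the standard idea the paper shows to \emph{fail} for amalgams. Cornu\'ejols and Cunningham can find an amalgam separation with inclusion-wise minimal servant, but (Figure~\ref{hiddenamalgam}) the resulting blocks may still have amalgams, so minimality does not stop the recursion from branching; since amalgam blocks are only guaranteed strictly smaller, the naive recursion tree can have exponentially many leaves, and the same blow-up hits your LP-gluing variant. Moreover the servant is a \emph{family} of subproblems, one per stable set of the cutset $U$, and in an amalgam $U=A_1\cup K$ has unboundedly many stable sets, so ``recording which stable sets must be remembered'' is itself exponential without a further idea. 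The paper's actual fix, absent from your proposal, has three ingredients: (i) groupings, which replace $U$ by a transversal $\lazy{\cU}$ that for amalgams is a \emph{clique} (so only $|K|+2$ subproblems per servant, even though $\alpha(U)$ is unbounded); (ii) the rooted-graph formulation with the conservation law of Fact~\ref{size} (the areas of master and servant partition the area of the parent, so decomposing the servant side is harmless); and (iii) the potential-function argument of Lemmas~\ref{graphcount} and~\ref{quadratic}, tracking load, the number of positive-load members, and total root size, which bounds every proper $1$-linearized decomposition list by $n^2$ members. The paper explicitly remarks that it knows no other way to use amalgams in a polynomial-time stable set algorithm, so this is a missing idea rather than a routine detail you could defer.
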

The {\em stable set polytope} of a graph is the convex hull
of the characteristic vectors of the stable sets of the graph. 
Linear descriptions of stable set polytopes require in the worst case exponentially
many inequalities and arbitrarily large coefficients (in minimum integer form),
even for cap-free graphs with no even hole.
However, for those graphs we can tame the descriptions by allowing some extra
variables.
An {\em extended formulation} for a polytope $P$ in $\R^n$ is a system of inequalities
$Ax+By\le d$ such that
$$P=\{x\in \R^n:\exists y\; [Ax+By\le d]\;\}.$$
An extended formulation for $P$ is {\em compact} if its encoding has polynomial size in $n$.

\begin{theorem}\label{compacttheorem}
Stable set polytopes
of cap-free graphs without even hole admit compact extended formulations.
\end{theorem}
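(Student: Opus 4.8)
The plan is to mirror the algorithmic result of Theorem~\ref{algtheorem} at the level of polytopes: rather than computing a maximum-weight stable set by recursively decomposing the graph, I would recursively \emph{build} a compact extended formulation of the stable set polytope $\operatorname{STAB}(G)$ out of compact extended formulations of the pieces produced by the very same decomposition. The engine throughout is Balas's extended formulation for a union of polytopes: if $P_1,\dots,P_k$ each admit a compact extended formulation, then so does $\operatorname{conv}(P_1\cup\cdots\cup P_k)$, with size roughly the sum of the individual sizes plus $O(k)$ new variables. Thus the whole argument reduces to two ingredients --- a decomposition theorem producing a polynomial-size family of pieces, and, for each decomposition used, a \emph{composition lemma} expressing $\operatorname{STAB}(G)$ as a projection of a construction built from the $\operatorname{STAB}$ of the pieces by products and unions of polytopes.

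First I would fix the structural decomposition for cap-free graphs with no even hole: every such graph is either \emph{basic} or admits a cutset of one of a bounded list of types (clique cutsets together with the more global join/amalgam cutsets that already drive the proof of Theorem~\ref{algtheorem}). For the basic class I expect the graphs to be perfect with only polynomially many maximal cliques (e.g.\ triangulated graphs, with at most $n$ maximal cliques); there the clique-inequality system is already a polynomial-size non-extended description of $\operatorname{STAB}(G)$, so the base case is immediate.

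Next, the composition lemmas. For a clique cutset $K$ separating $G$ into $G_1$ and $G_2$ sharing exactly $K$, a stable set meets $K$ in at most one vertex; conditioning on which vertex of $K$ (if any) is used gives at most $|K|+1$ cases, and in each case $\operatorname{STAB}(G)$ becomes a product of faces of $\operatorname{STAB}(G_1)$ and $\operatorname{STAB}(G_2)$ glued along the fixed $K$-coordinates. Hence $\operatorname{STAB}(G)$ is the convex hull of a union of at most $|K|+1$ such products, and Balas's construction yields a compact formulation from compact formulations of the pieces. For the join-type cutsets the same philosophy applies with more bookkeeping: one partitions by the bounded-size interface information across the cut, writes $\operatorname{STAB}(G)$ as a union over interface patterns of products of suitably restricted piece-polytopes, and again applies Balas. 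This is where I expect the real work to sit, since the interface of a $2$-join or amalgam is less trivial than a single shared clique, and one must verify both the polytope identity and that the number of patterns stays polynomial.

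Finally I would control the recursion --- the ``extra work'' the abstract alludes to. A decomposition theorem good enough for a \emph{finite} recursion need not yield a \emph{polynomial} one: a naive recursion can spawn exponentially many pieces, and even a polynomial-size decomposition tree is useless if each composition multiplies the formulation size. I would therefore insist on a \emph{polynomial decomposition scheme}: the total number of pieces over the whole recursion is polynomial in $n$, each cutset is found in polynomial time, and, crucially, each composition step increases the formulation size only additively (as in Balas's bound) rather than multiplicatively. Under these conditions the final size is bounded by the sum over all pieces of their base-case sizes plus a polynomial overhead per composition, hence polynomial in $n$, proving Theorem~\ref{compacttheorem}. The main obstacle, exactly as for Theorem~\ref{algtheorem}, is establishing the polynomiality of the decomposition scheme together with the additive (non-multiplicative) blowup of the join compositions.
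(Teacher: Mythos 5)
Your high-level architecture (decompose, compose, control the recursion) matches the paper's, but your concrete composition recipe has a genuine flaw: expressing $\pol{G}$ as a Balas convex hull of a union over interface patterns of \emph{products of faces} of $\pol{G_1}$ and $\pol{G_2}$ duplicates the formulations of both pieces once per pattern, so each composition step multiplies sizes by the number of patterns rather than adding. Nested over a decomposition list (which here can have up to $n^2$ members, cf.\ Lemma~\ref{quadratic}), this gives size on the order of $\prod_i(|K_i|+1)$, which is exponential already for nested clique cutsets -- so your scheme violates the very additivity condition you correctly identify as essential, and you never supply a mechanism that achieves it. The paper's compositions are duplication-free: for clique cutsets, Chv\'atal's identity (Corollary~\ref{cliquecut}) states $\pol{G}=\{x: x_{G_1}\in\pol{G_1},\ x_{G_2}\in\pol{G_2}\}$ with no case split and no extra variables; for general node cutsets $U$, one adds a \emph{shared} record clique with one new variable $\recordnode{T}$ per stable set $T$ of $G_U$ and glues the two sides through the tight clique constraints $\cL(U)$ (Theorem~\ref{composesystem}), again additively; and for amalgams this collapses to just two extra variables and the single inequality $x(K)+x_{\recordnode{\varnothing_1}}+x_{\recordnode{\varnothing_2}}\ge 1$ (Theorem~\ref{amalgam_extended_formulation}). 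Balas's formula is invoked only once per leaf piece (Lemma~\ref{hole-template-pol}, Theorem~\ref{th:templ-extform}), where the multiplicative factor $|\stable{G_U}|$ is harmless because the cutsets there are 3-node paths (five stable sets) or tiny cliques. Note also that your ``bounded-size interface'' premise fails for amalgams as stated: $A_1$ can be arbitrarily large, and what saves the day is the grouping idea ($A_1$ fully adjacent to $K\cup A_2$, so only ``$A_1$ hit or not'' crosses the cut), which is exactly what the record over the clique $\lazy{\cU}$ encodes.

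Two further deferred points are in fact the substance of the paper's proof. First, your base case is wrong: the triangle-free even-hole-free graphs reached after exhausting amalgam and clique cutsets are not chordal or otherwise ``perfect with few maximal cliques''; by Theorem~\ref{tf} they are either the cube or decompose along cutsets inducing 3-node paths into at most $|V|$ \emph{fan-templates}, whose record graphs (path plus center plus 5-node records) are then split along 2-node clique cutsets into graphs on at most 8 nodes (Lemmas~\ref{zipfan} and~\ref{hole-template-pol}). Second, the polynomial bound on the total number of pieces is not something one can simply ``insist on'': amalgam blocks overlap in $K$ and each adds a marker node, so a naive count does not obviously stay polynomial; the paper proves the $n^2$ bound by the load-counting argument of Lemma~\ref{quadratic} for proper $1$-linearized decompositions. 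So the two items you explicitly postpone (``where the real work sits'' and ``the main obstacle'') are precisely the paper's key lemmas, and the placeholder recipes you do give for the compositions would not yield a compact formulation.
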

\noindent
We develop decomposition/composition tools for solving maximum weight
stable sets problems.
The working of such tool is that
when a graph is decomposable into smaller parts according
to the tool's specifications,
then that can be used to efficiently construct a solution for the whole from
solutions for the parts. Some of these tools are well-known but need
some extra work to make them suitable as a component in polynomial-time algorithms.
We develop similar mechanisms for combining polynomially sized linear
programs for stable set problems on parts of a decomposition into
such linear program for the whole.
\\[.5\baselineskip]
We apply these results to cap-free graphs with no even holes.
Conforti, Cornu\'{e}jols, Kapoor, and Vu\v{s}kovi\'c
\cite{CAP,TFODD,OSdec}
give a decomposition theorem for graphs with no even hole and use that to find
even holes in polynomial time \cite{OSrec}.
The following theorem is a simplified variant of the main result in \cite{CAP}.
\begin{theorem}[{\cite[Theorem 4.1]{CAP}}]
\label{capdec}
Every cap-free graph with a triangle either admits an 
amalgam decomposition or a clique cutset decomposition {\em (both defined in Section 2)} or contains a node adjacent to all other nodes. 
\end{theorem}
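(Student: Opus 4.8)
My plan is to prove the statement in contrapositive form: I take a cap-free graph $G$ that contains a triangle, has no clique cutset, and has no node adjacent to all others, and I produce an amalgam. Since a cutvertex or a disconnection is a (possibly trivial) clique cutset, the no-clique-cutset assumption already buys me connectivity, so the real work is locating the split. I would first recall from Section~2 that exhibiting an amalgam amounts to partitioning $V(G)$ into a clique $K$ together with two sides $A_1\cup B_1$ and $A_2\cup B_2$, where $A_1$ is complete to $A_2$, the clique $K$ is complete to $A_1\cup A_2$, each side has at least two vertices, and the \emph{only} edges between the two sides are the $A_1$--$A_2$ edges. The entire proof is the search for such a partition.

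The workhorse is the cap-free hypothesis read as an attachment restriction: if $H$ is a hole and $v\notin V(H)$, then $N(v)\cap V(H)$ cannot be exactly one edge of $H$, since $H$ together with $v$ would then be a cap. I would first establish a handful of such local lemmas relative to a fixed triangle $T=\{a,b,c\}$: which subsets of $T$ a vertex outside $T$ may see, and, more delicately, how two vertices lying on different holes through a common edge of $T$ may be adjacent. These lemmas convert ``no cap'' into rigid neighborhood statements, and it is this rigidity that makes a clean two-sided split possible at all.

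For the construction itself I would fix the triangle $T$, delete a suitable clique $K$ (for instance a maximal clique containing an edge of $T$), and analyze the connected components of the remainder together with their attachments to $K$. Absence of a clique cutset forces these components to attach to $K$ substantially, and the attachment lemmas then force the vertices to organize into two classes that are completely joined across a sub-clique; I would take these as $A_1$ and $A_2$, with $K$ as the common clique and the unattached remainders as $B_1,B_2$. Verifying that the join $A_1$--$A_2$ is complete and that no stray cross-edge survives outside it is exactly where the no-cap attachment lemmas get invoked repeatedly.

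The hard part will be the case analysis that guarantees the amalgam is nondegenerate and that the escape clauses are reached when it is not. Concretely: (i) each side must retain a private vertex, i.e.\ $B_1,B_2\neq\emptyset$; when a candidate side collapses into $K$, the separation is really a clique cutset and that branch must be recognized and routed to the clique-cutset conclusion. And (ii) the degenerate configurations — every vertex seeing all of $T$, or holes too short for the attachment lemma to bite — must be isolated, and it is precisely there that a node adjacent to all others is forced. Keeping this trichotomy exhaustive while continually feeding the configurations back through the cap-free attachment lemmas is, I expect, the main bookkeeping obstacle of the proof.
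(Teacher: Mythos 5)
A preliminary point: the paper contains no proof of Theorem~\ref{capdec} at all --- it is imported, as an explicitly ``simplified variant'', from \cite[Theorem 4.1]{CAP}, so your attempt can only be measured against the proof in that reference, which is a long structural analysis occupying a substantial part of that paper. Your sketch gets the frame right (contrapositive setup, the paper's amalgam definition including the nondegeneracy conditions $|V_i\cup A_i|\ge 2$, and the correct reading of cap-freeness: no node outside a hole may see exactly the two endpoints of one of its edges), but the decisive construction fails at two concrete points, and the step that remains --- ``the attachment lemmas then force the vertices to organize into two classes that are completely joined across a sub-clique'' --- is not a bookkeeping obstacle, it \emph{is} the theorem.

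First, cap-freeness imposes no local restriction at the triangle itself: a cap requires a hole, a triangle is not a hole, and an edge of $T$ need not lie on any hole. Already $K_4$ minus an edge is cap-free and contains a node adjacent to exactly two adjacent nodes of the triangle, so the ``handful of local lemmas relative to a fixed triangle $T$'' (which subsets of $T$ an outside vertex may see) do not exist at that level of generality; in \cite{CAP} the rigidity only emerges for larger structures grown out of the triangle (triangles glued along shared edges, and the holes through them), and proving those attachment lemmas is the bulk of the work there. Second, and more fatally, the engine of your construction is empty: if $G$ has no clique cutset, then for \emph{every} clique $K$ the graph $G-K$ is connected and its (unique) component $C$ satisfies $\neighbor{G}{C}=K$ --- otherwise the clique $\neighbor{G}{C}\subseteq K$ would itself separate $C$ from the rest, a clique cutset. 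So after deleting a maximal clique $K$ containing an edge of $T$ there is exactly one component attached to all of $K$, and there is nothing to sort into two classes. Note that in an amalgam the two sides are separated by $K\cup A_1$ and $K\cup A_2$, which are in general \emph{not} cliques (the $A_i$ need not be cliques), so the split can never be exposed by deleting a clique; the mechanism that produces the fully adjacent pair $(A_1,A_2)$ --- in \cite{CAP}, the attachment analysis of a maximal triangle-based structure, with the universal node arising when that structure absorbs the whole graph --- is exactly what is missing from your plan, and neither branch (i) nor (ii) can be completed without it.
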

\noindent
So cap-free graphs with no even holes can be built from triangle-free graphs with no
even holes.
Conforti, Cornu\'{e}jols, Kapoor, and Vu\v{s}kovi\'c
\cite{TFODD} prove that triangle-free graphs with no
even holes can be further decomposed into as simple graphs as ``fans''
and the 1-skeleton of the three dimensional cube.
This is Theorem \ref{tf}; as that result is a bit technical, we explain its details later,
in Section~\ref{pfthm1}.
As we will see, all decompositions coming up in Theorems~\ref{capdec} and~\ref{tf} fall
in our framework and thus, taking all together, we get Theorems~\ref{algtheorem}
and~\ref{compacttheorem}.
\\[.5\baselineskip]
{\bf Notation.}
A {\em transversal} of a collection $\cA$ of disjoint nonempty sets
is a set $W\subseteq \cup\cA$ with $|W\cap X|=1$ for each $X\in \cA$.

Let $G=(V,E)$ be a graph.
If $X,Y\subseteq V$ are disjoint and some node in $X$ has a neighbor in $Y$,
then $X$ and $Y$ are {\em adjacent}.
If each node in $X$ is adjacent to all nodes in $Y$, then
$X$ is {\em fully adjacent} to $Y$.
The set of nodes outside $X$ that are adjacent to $X$ is denoted by 
$\neighbor{G}{X}$.
The subgraph of $G$ induced by $X\subseteq V$ is $G_X$.
Moreover, $G-X=G_{V\sm X}$ and $\buur{G}{X}=\neighbor{G}{V\sm X}$.
If $x\in V$, we write $G-x$ for $G-\{x\}$, $\neighbor{G}{x}$ for $\neighbor{G}{\{x\}}$,
``$x$ is fully adjacent to $Y$'' for ``$\{x\}$ is fully adjacent to $Y$'',
etc.

Let $\cA$ be a collection of disjoint nonempty sets in $G$.
Let $G_{\cA}$ denote the graph with as nodes the members of $\cA$
and as edges the adjacent pairs in $\cA$.
We call any graph isomorphic to $G_{\cA}$ a {\em  pattern} of $\cA$.
We call $\cA$ a {\em region} of $G$,
if each adjacent pair in $\cA$ is fully adjacent;
in that case, 
$G_W$ is a pattern of $\cA$ in $G$,
for every transversal $W$ of $\cA$.

If $X$ is fully adjacent to $\neighbor{G}{X}$ and $X\neq\emptyset$,
we call $X$ a {\em group of $G$}.
A partition of a set $A$ into groups
of $G$ is a {\em grouping} of $A$ in $G$.

The collection of stable sets in a graph $G=(V,E)$ is denoted by $\stable{G}$.
The {\em stability number} $\alpha(G)$ of $G$ is
the size of the largest stable set in $G$.
If $X\subseteq V$, then $\alpha(X)=\alpha(G_X)$.
If $w=(w_v:v\in V)\in\R^V$, then $w_X=(w_v:v\in X)$ and
$w(X)=\Sigma_{v\in X}w_v$.
\section{Solving the stable set problem by decomposition}\label{decomposition}
Given a graph $G=(V,E)$ and a weighting $(w_v:v \in V)$,
we consider the following problem:
\\[.5\baselineskip]
\indent
Find in $G$, a stable set $S$, that maximizes $w(S)$.
\hfill({\em Stable set problem})
\\[.5\baselineskip]
Suppose we are given a triple $(V_1,U,V_2)$ of disjoint sets with union $V$ 
and a grouping $\cU$ of $U$
in $G_{V_2\cup U}$ such that
$V_1$ and $V_2$ nonadjacent, $|V_1\cup U|>|\cU|$ and $|V_2|>0$.
We call such $(V_1,U,V_2)$ with $\cU$ a {\em node cutset separation} of $G$.
(Recall from Section~\ref{intro}, that $\cU$ is a grouping means that
each $X\in \cU$ is fully adjacent to $N_G(X)\cap V_2$ and
that each pair in $\cU$ is either nonadjacent or fully adjacent.)

Define for each stable set $S'$ in $V_1\cup U$ the
value:
$$\Correct(S')=\max\{w(S'')\,:\, S''\subseteq V_2, S'\cup S''\in\stable{G}\}.$$
Then the maximum $w(S)$ of a stable set $S$ in $G$ is
equal to the maximum in the following problem.
\\[.5\baselineskip]
\indent
Find in $G_{V_1\cup U}$, a stable set $S'$,
that maximizes $w(S')+ \Correct(S')$.
\hfill({\em Master})
\\[.5\baselineskip]
Fix a transversal \ $\lazy\cU$ of $\cU$.
It is immaterial which particular transversal is chosen;
our actual object of interest is 
the graph $G_{V_2\cup\, \lazy{\cU}}$ and
(up to graph isomorphism) that does not depend on the choice of
$\lazy\cU$---that is what ``$\cU$ is a grouping of $U$ in $G_{V_2\cup U}$'' means.
We define for each $\cB\subseteq \cU$,
the set $\lazy{\cB}=\{\lazynode{X}\,:\,X\in \cB\}$ and
we define the map $\Hom\cU\!\!: \stable{G_{V_1\cup U}} \rightarrow 
\stable{G_{\lazy\cU}}$ by:
$$\Hom{\cU}(S')=\{\lazynode X\,:\, X\in \cU, S'\cap X\neq \emptyset\}$$
for each stable set $S'$ in $V_1\cup U$.

Now take any stable set $S'$ in $V_1\cup U$.
Then, for any set $S''$ in $V_2$, it is straightforward to see that 
$S'\cup S''$ is a stable set in $G$ if and only if 
$\Hom{\cU}(S')\cup S''$ is a stable set in $G_{V_2\cup\, \lazy{\cU}}$.
This means that
$$\Correct(S')=\Service_{\,\lazy\cU}(\Hom{\cU}(S')),$$
where the function $T \mapsto \Service_{\lazy\cU}(T)$ on $\stable{\lazy\cU}$
is given by the values $w(V_2\cap S_T)$ of the stable sets $S_T$ determined by:
\\[.5\baselineskip]
\indent
Find in $G_{V_2\cup\, \lazy{\cU}}$,
for each stable set $T$ in $\lazy\cU$,\\
\indent
\indent
a stable set $S_T$ with $\lazy{\cU}\cap S_T=T$,
that maximizes $w(V_2\cap S_T)$.\hfill({\em Servant})
\\[.5\baselineskip]
The discussion above implies the following results.
\begin{gather}\label{maxformula}
\max_{S\in\stable{G}}w(S)=\max_{S'\in \stable{G_{V_1\cup U}}}w(S')+ \Correct(S'),\\
\intertext{
where\ $\Correct=\Service_{\,\lazy\cU}\circ \Hom{\cU}$ and
}
\label{servantformula}
\Service_{\,\lazy\cU}(T)=
\max_{S_T\in \stable{G_{V_2\cup\,\lazy\cU}},\, \lazy\cU\cap S_T=T}w(V_2\cap S_T).
\end{gather}
\begin{fact}\label{formula}
Suppose we are given:
\begin{enumerate}[(1)]
\itemsep 0mm
\item
a solution $(S_T\,:\,T\in \stable{\lazy{\cU}})$ of the servant,
\item
the function $\Service_{\,\lazy\cU}: T\mapsto w(V_2\cap S_T)$ on $\stable{\lazy{\cU}}$,
\item
a solution $S'$ of the master with 
$\Correct=\Service_{\,\lazy\cU}\circ \Hom{\cU}$,
\item
$S=S'\cup (V_2\cap S_{\Hom{\cU}(S')})$.
\end{enumerate}
Then $S$ is a solution of the stable set problem on $G$ with $w$.
\end{fact}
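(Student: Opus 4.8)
The plan is to verify directly that the set $S$ defined in item~(4) is both a stable set in $G$ and of maximum weight, by assembling the identities established just before the statement; no new construction is needed. Throughout, write $T=\Hom{\cU}(S')$. Since $S'$ is stable in $G_{V_1\cup U}$ and adjacent members of the grouping $\cU$ are fully adjacent (a pair of representatives $\lazynode{X},\lazynode{Y}$ in $T$ with $X,Y$ adjacent would force two adjacent nodes of $S'$), the map $\Hom{\cU}$ indeed lands in $\stable{G_{\lazy\cU}}$, so $T$ is a stable set in $\lazy\cU$ and the servant solution supplies a well-defined $S_T$; thus $S=S'\cup(V_2\cap S_T)$.

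First I would check that $S$ is a stable set. The key observation is that $S_T$ splits cleanly along the cutset: because the transversal $\lazy\cU\subseteq U$ is disjoint from $V_2$, we have $S_T=(\lazy\cU\cap S_T)\cup(V_2\cap S_T)=T\cup(V_2\cap S_T)=\Hom{\cU}(S')\cup(V_2\cap S_T)$. As $S_T$ is by construction a stable set in $G_{V_2\cup\lazy\cU}$, the equivalence established above---that $S'\cup S''$ is stable in $G$ if and only if $\Hom{\cU}(S')\cup S''$ is stable in $G_{V_2\cup\lazy\cU}$, for any $S''\subseteq V_2$---applied with $S''=V_2\cap S_T$ yields that $S=S'\cup(V_2\cap S_T)$ is stable in $G$.

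Next I would compute the weight. Since $S'\subseteq V_1\cup U$ and $V_2\cap S_T\subseteq V_2$ are disjoint, $w(S)=w(S')+w(V_2\cap S_T)$. By the definition of $\Service_{\,\lazy\cU}$ in item~(2) and the identity $\Correct=\Service_{\,\lazy\cU}\circ\Hom{\cU}$ from item~(3), we get $w(V_2\cap S_T)=\Service_{\,\lazy\cU}(T)=\Service_{\,\lazy\cU}(\Hom{\cU}(S'))=\Correct(S')$, hence $w(S)=w(S')+\Correct(S')$. Because $S'$ solves the master, this equals $\max_{S'\in\stable{G_{V_1\cup U}}}\bigl(w(S')+\Correct(S')\bigr)$, which by \eqref{maxformula} equals $\max_{S\in\stable{G}}w(S)$. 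Combined with stability, $S$ is an optimal solution.

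The reasoning is essentially bookkeeping, so the only real obstacle is making the pieces line up correctly: one must confirm that $T=\Hom{\cU}(S')$ is genuinely a stable set in $\lazy\cU$ (so that the indexed servant solution $S_T$ exists), that $\lazy\cU$ and $V_2$ are disjoint (so $S_T$ decomposes as $T\cup(V_2\cap S_T)$ and the two weight contributions add without overlap), and that the stated equivalence is invoked with exactly the set $S''=V_2\cap S_T$. No estimate or optimization beyond these identities is needed; the content was already distilled into \eqref{maxformula}, \eqref{servantformula}, and the preceding equivalence.
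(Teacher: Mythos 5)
Your proposal is correct and follows exactly the route the paper intends: the paper gives no separate proof of Fact~\ref{formula}, presenting it as an immediate consequence of the preceding discussion---the equivalence that $S'\cup S''$ is stable in $G$ iff $\Hom{\cU}(S')\cup S''$ is stable in $G_{V_2\cup\,\lazy\cU}$, together with (\ref{maxformula}) and (\ref{servantformula})---and your write-up simply makes those steps explicit. The details you verify (that $\Hom{\cU}(S')$ is indeed stable so $S_{\Hom{\cU}(S')}$ exists, that $\lazy\cU$ is disjoint from $V_2$, and the weight bookkeeping) are precisely the ``straightforward to see'' parts the paper leaves implicit.
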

\noindent
Fact~\ref{formula} and the underlying formula (\ref{maxformula}) are
seminal to the approach in this paper.
It says that
we can solve the stable set problem on $G$ as follows:
first, solve all the problems listed in the servant and
substitute the results in the objective function of the master;
after that, solve the master.
We call this {\em master/servant decomposition}, and use that term freely at all
levels: for instance, we will call
$G_{V_1\cup U}$, $G_{V_2\cup\,\lazy\cU}$ a master/servant decomposition
of $G$.
\\[.5\baselineskip]
Singletons are groups.
The advantage of using larger groups is that that takes out
replications in the list of problems making up the servant.
For us that saving is crucial: The decompositions in Theorem~\ref{capdec}
come from node cutset separations with unbounded $\alpha(U)$
but with $\alpha(\lazy{\cU})=1$; and that is a big difference:
the servant comprises as many problems as there are stable sets in $\lazy\cU$,
and that number lies
between $2^{\alpha(\lazy\cU)}$ and
${|\lazy\cU|\choose\alpha(\lazy\cU)}2^{\alpha(\lazy\cU)}$.
Our algorithms will all come with an a priori bound $\alphabound$
on $\alpha(\lazy{\cU})$, but allow $\alpha(U)$ to be arbitrary high.

Observe that for any node $v\in U$, 
the union of all groups in $U$ that contain $v$ is a group.
So the inclusion-wise maximal groups in $U$ form a grouping of $U$,
we denote this grouping by $\coarse{U}$.
It is clearly ``better than the rest'':
the pattern $G_{\coarse{\lazy U}}$ is a proper
induced subgraphs of the pattern $G_{\lazy{\cU}}$ for any other $\cU$.
Incidentally, note that it is not hard to find 
the maximal group in $U$ containing a particular node $v$:
starting with $X=U$,
keep removing nodes from $X$ that have not the same neighbors
outside the current $X$ as $v$
until this is no longer possible.
Then, $X$ has become a group. 
As all groups in $U$ containing $v$ will have stayed in $X$ during the procedure,
$X$ is the maximal group in $U$ that contains $v$.
\\[.5\baselineskip]
{\bf Further decomposing the servant---rooted graphs.}
Our approach will be to not only apply master/servant decomposition to
the stable set problem on $G$ but also to the problems that appear in the master and the
servant. This is not without issues, both for the master as the servant.
We first consider the servant.

The servant is a {\em stable set problem on a rooted graph}; which in
general is formulated as:
Given a graph $G=(V,E)$, a {\em root} $Z\subseteq V$, and a weighting $(w_v:v \in V)$:
\\[.5\baselineskip]
\indent
Find in $G$, for each stable set $R$ in $Z$,\\
\indent
\indent
a stable set $S_R$ with $Z\cap S_R=R$, that maximizes $w(S_R)$.\\
\mbox{}\hfill({\em Stable set problem with root})
\\[.5\baselineskip]
The servant is special in that it has weighting $0$ on the root
(because $w(S_R)=w(S_R\sm Z)$ if $w$ is identical to $0$ on $Z$).
We call a pair $(G,Z)$ with $Z\subseteq V$ a {\em rooted} graph.
We call $V\sm Z$ the {\em area} of $(G,Z)$ and of the stable
set problem on $(G,Z)$;
if the area is empty we call $(G,Z)$, and
the stable set problem, {\em trivial}.
The servant is never trivial.

The stable set problem on a rooted graph is a list of stable set problems.
That the servant has this ``multiple-problem'' feature becomes an
issue when we further decompose these subgraphs of the servant as
if they were totally unrelated.
We easily run into exponential explosion then;
even if the root has as few as two stable sets,
which is always the case when $U$ is nonempty.
That means that we can hardly iterate decomposing ``on the servant side''.

We ran into this issue of the ``multiple-problem'' aspect of the servant
when we wanted to use amalgam decompositions in Theorem~\ref{capdec} to design
a polynomial time algorithm for the stable set problem of cap-free graphs
with no even holes.
A standard way to address the issue is to avoid servant-graphs that can be
further decomposed, for instance by taking $V_2$ inclusion-wise minimal.
Clique cutsets with $V_2$ inclusion-wise minimal
have that property and can be found efficiently (Whitesides~\cite{WHI1}).
But for the amalgam decompositions used in Theorem~\ref{capdec},
this will not work.
Cornu\'ejols and Cunningham \cite{CC}
gave a polynomial-time algorithm to find an amalgam separation with
minimal servant, but as illustrated by Figure~\ref{hiddenamalgam},
that does not guarantee that the amalgam blocks will have no amalgams. 
So, what then?
Just forbidding to decompose ``on the servant side'' and ignore occasions
that arise limits the applicability of the approach too much---at
least for our purposes.

There is a way out:
master/servant decomposition extends easily to rooted graphs
$(G,Z)$ with $Z\subseteq V_1\cup U$:
in the master, just replace the graph $G_{V_1\cup U}$ by 
the rooted graph $(G_{V_1\cup U},Z)$, but keep the same objective function
$S\mapsto w(S)+\Correct(S)$ with
$\Correct=\Service_{\,\lazy\cU}\circ \Hom{\cU}$, where
$\lazy\cU$ is the same transversal of $\cU$ and
$\Service_{\,\lazy\cU}$ comes from the same servant, with the same graph,
the same root and the same weighting as before.
\\[.5\baselineskip]
\indent
Find in $G_{V_1\cup U}$, for each stable set $R$ in $Z$,\\
\indent
\indent
a stable set $S'_R$ with $Z\cap S'_R=R$, that maximizes $w(S'_R)+ \Correct(S'_R)$.\\
\mbox{}
\hfill({\em Master with root})
\\[.5\baselineskip]
In terms of rooted graphs, (\ref{maxformula}) reads as:
\begin{eqnarray}\label{rootmaxformula}
\max_{S_R\in\stable{G},\,Z\cap S_R=R}w(S_R)=
\max_{S'_R\in \stable{G_{V_1\cup U}},\,Z\cap S'_R=R}w(S'_R)+ \Correct(S'_R).
\end{eqnarray}
with the function $\Correct=\Service_{\,\lazy\cU}\circ \Hom{\cU}$ is as in (\ref{servantformula}).
Moreover:
\begin{fact}\label{size}
The area of $(G,Z)$ is the disjoint union of the area of the master and
the servant.
\end{fact}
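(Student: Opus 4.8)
The statement is a bookkeeping identity about node sets, so the plan is simply to unwind the three occurrences of ``area'' and compare them. Recall that the area of a rooted graph is its node set minus its root. Hence the area of $(G,Z)$ is $V\sm Z$; the master is the rooted graph $(G_{V_1\cup U},Z)$, so its area is $(V_1\cup U)\sm Z$; and the servant is the stable set problem on $(G_{V_2\cup\,\lazy{\cU}},\lazy{\cU})$, whose node set is $V_2\cup\lazy{\cU}$ and whose root is $\lazy{\cU}$, so its area is $(V_2\cup\lazy{\cU})\sm\lazy{\cU}=V_2$.

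With these three sets identified, the plan is to verify that $V\sm Z=((V_1\cup U)\sm Z)\cup V_2$ and that the two sets on the right are disjoint. The structural facts I would invoke are that $(V_1,U,V_2)$ are pairwise disjoint with union $V$, and that in the rooted setting we have $Z\subseteq V_1\cup U$. From $V=V_1\cup U\cup V_2$ and $Z\cap V_2=\emptyset$ (which holds because $Z\subseteq V_1\cup U$ while $V_2$ is disjoint from $V_1\cup U$) one gets $V\sm Z=((V_1\cup U)\sm Z)\cup(V_2\sm Z)=((V_1\cup U)\sm Z)\cup V_2$. Disjointness of $(V_1\cup U)\sm Z$ and $V_2$ is immediate, since $V_1\cup U$ and $V_2$ are already disjoint.

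There is no genuine obstacle here; the one point that deserves a moment's care is confirming that the servant's area is exactly $V_2$ rather than some larger set. This rests on the observation that $\lazy{\cU}\subseteq U$, as $\lazy{\cU}$ is a transversal of the grouping $\cU$ of $U$, so that $\lazy{\cU}$ is disjoint from $V_2$ and the subtraction $(V_2\cup\lazy{\cU})\sm\lazy{\cU}$ collapses to $V_2$. Once that is noted, the claimed disjoint union follows directly from the partition $V=V_1\cup U\cup V_2$ together with the hypothesis $Z\subseteq V_1\cup U$.
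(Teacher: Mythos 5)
Your proof is correct and takes essentially the same approach as the paper, which states Fact~\ref{size} without proof precisely because the definitional unwinding you perform---area of $(G,Z)$ is $V\sm Z$, area of the master is $(V_1\cup U)\sm Z$, area of the servant is $V_2$, combined with the partition $V=V_1\cup U\cup V_2$ and the hypothesis $Z\subseteq V_1\cup U$---is the intended argument. Your explicit check that $\lazy{\cU}\subseteq U$, so that $(V_2\cup\lazy{\cU})\sm\lazy{\cU}$ collapses to $V_2$, is the one detail the paper leaves tacit, and you handle it correctly.
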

\noindent
This is crucial to our approach.
The stable set problem on $G$ is the same as
the stable set problem on the rooted graph $(G,\emptyset)$.
Starting from the rooted graph perspective,
we reduce a stable set problem on a rooted graph into one stable set problem
on the rooted master-graph and one stable set problem
on the rooted servant-graph.
Regardless how often we repeat this master/servant decomposition for rooted graphs,
by (\ref{size}),
the collective area of the list of problems constructed does not grow.
We formalize this by ``decomposition lists''.
\\[.5\baselineskip]
Suppose $(G,Z)$ occurs
in an left/right ordered list $\mathcal L$ of rooted graphs.
Then a {\em master/servant decomposition of $\mathcal L$
along node cutset separation $(V_1,U,V_2)$ with $\cU$ of $(G,Z)$} is
a list obtained from $\cL$ by replacing $(G,Z)$ by 
the rooted master graph $(G_{V_1\cup U},Z)$ (in the same position) and then inserting
the rooted servant graph $(G_{V_2\cup\, \lazy{\mathcal U}},\lazy{\mathcal U})$
anywhere further down the list (so to the right of where $(G_{V_1\cup U},Z)$ is).

A {\em master/servant decomposition-list} of $(G,Z)$ is either
$((G_,Z))$, or (recursively) defined as a master/servant decomposition
of a master/servant decomposition list of $(G,Z)$.
\begin{lemma}\label{graphcount}
A master/servant decomposition list of a rooted graph with $n$ nodes
contains at most $n$ non-trivial rooted graphs and at most $2n^2$ trivial rooted graphs.
\end{lemma}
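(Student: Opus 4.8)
The plan is to track the process that builds the list: start from the one-element list $((G,Z))$ and repeatedly apply a master/servant decomposition step, each of which replaces one rooted graph by a master and a servant. Two invariants drive the argument. By Fact~\ref{size} each step preserves the total area summed over the list, so this total always equals the area $|V\sm Z|\le n$ of the starting graph; since every non-trivial rooted graph contributes area at least $1$, the number of non-trivial graphs never exceeds $n$, which is the first bound. For the second part, note that a node cutset separation requires $|V_2|>0$, so the servant $(G_{V_2\cup\,\lazy\cU},\lazy\cU)$ always has non-empty area $V_2$ and is non-trivial, while a trivial rooted graph can never be decomposed (its empty area would force $V_2=\emptyset$). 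Hence every trivial graph in the final list is the master of some step, and it suffices to bound the number of steps whose master is trivial.

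First I would record how a single step changes the list. Write $(G',Z')$ for the decomposed graph, with separation $(V_1,U,V_2)$, grouping $\cU$, and $Z'\subseteq V_1\cup U$. The master $(G_{V_1\cup U},Z')$ has area $(V_1\cup U)\sm Z'$, hence is trivial exactly when $V_1\cup U=Z'$. I call the step Type~A when its master is non-trivial and Type~B when its master is trivial; since a Type~B step produces exactly one trivial graph and no trivial graph ever arises otherwise, the trivial-graph count equals the number of Type~B steps.

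The crux is a potential argument. Every node of every list graph lies in $V$ (masters use $V_1\cup U\subseteq V$ and servants use $V_2\cup\lazy\cU\subseteq V$), so each graph has at most $n$ nodes. I would use the potential $\Psi=\sum(n-|V(H)|)$, the sum taken over the non-trivial graphs $H$ currently in the list; each summand is non-negative and there are at most $n$ non-trivial graphs, so $0\le\Psi\le n^2$ throughout, with $\Psi=0$ initially. A direct computation, using $|V(G')|=|V_1\cup U|+|V_2|$, shows that a Type~A step changes $\Psi$ by $n-|\cU|\ge 0$ (as $|\cU|\le n$), whereas a Type~B step, where $V_1\cup U=Z'$ so that the constraint $|V_1\cup U|>|\cU|$ reads $|Z'|>|\cU|$, changes $\Psi$ by $|Z'|-|\cU|\ge 1$. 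Thus $\Psi$ is non-decreasing and advances by at least $1$ at each Type~B step, so there are at most $n^2\le 2n^2$ Type~B steps, giving the bound on trivial graphs.

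The hard part is exactly this trivial-graph count: I need a quantity that is $O(n^2)$, monotone, and strictly increasing at every step that creates a trivial master. Area alone fails because trivial masters carry no area, and the raw total node count fails because it grows by $|\cU|$ at every step, owing to the duplicated root $\lazy\cU$. The potential $\Psi$ is tailored so that the forced root-size drop $|Z'|>|\cU|$ of a Type~B step becomes a strict gain, while the node growth of a Type~A step is absorbed by the new non-trivial graph that the step simultaneously creates. Some care is then needed to confirm that trivial graphs are never decomposed and to treat the degenerate case where the initial graph is already trivial (one trivial graph, no steps, well within the bounds).
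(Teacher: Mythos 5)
Your proof is correct, and its skeleton coincides with the paper's: both track the sequence of decompositions, both get the bound of $n$ non-trivial graphs from the area-conservation in Fact~\ref{size}, and both classify steps by whether the master is trivial, observing that servants are never trivial and trivial graphs are never decomposed, so trivial graphs arise exactly from trivial masters. Where you genuinely diverge is the mechanism behind the quadratic count. The paper argues locally: there are at most $n-1$ area-splits, hence at most $2n-1$ distinct areas ever appear, and between splits the root attached to a fixed area strictly shrinks (the new root $\lazy{\cU}$ has size $|\cU|<|V_1\cup U|=|Z'|$), so each area admits at most $n-1$ root-reductions, giving at most $(2n-1)(n-1)\le 2n^2$ trivial masters. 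You replace this per-area bookkeeping by one global monotone potential $\Psi=\sum_H\bigl(n-|V(H)|\bigr)$ over the non-trivial members $H$ of the current list, with the clean computations $\Delta\Psi=n-|\cU|\ge 0$ for a split and $\Delta\Psi=|Z'|-|\cU|\ge 1$ for a trivial-master step; the strict gain rests on exactly the same inequality $|V_1\cup U|>|\cU|$ from the definition of a node cutset separation that powers the paper's root-shrinkage observation. Each route buys something: the paper's version needs no node-count bound on list members, whereas yours requires (and correctly verifies) that every list graph has at most $n$ nodes, since $\lazy{\cU}$ is a transversal of $\cU$ inside $U$; in exchange your potential avoids the slightly delicate step of identifying ``the same area'' across time and yields the marginally sharper bound $n^2$. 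It is also worth noting that your argument is stylistically the one the paper itself uses later for $1$-linearized lists in Lemma~\ref{quadratic} (the analysis via total positive load, number of positive-load members, and total root size), so your proof in effect unifies the two counting lemmas under a single potential-function technique.
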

\begin{proof}
Imagine a sequence of master/servant
decompositions that leads from the rooted graph to the list.
Trivial rooted graphs can not be decomposed, so for this
analysis we ignore them.
We visualize each non-trivial rooted graph encountered
in the sequence as an area-root pair with nonempty area and root of
size at most $n-1$. These area-root pairs behave as follows.
Either we split an area into two disjoint nonempty areas and
assign each of them with a root, each with at most $n-1$ nodes,
or we reduce the root of an area-root pair to a proper subset without
changing the area of that pair.

As the initial area has size at most $n$,
we can apply at most $n-1$ ``area-splits''.
So overall, we will encounter at most $2n-1$ different areas.
Since roots have size at most $n-1$,
the ``root-reduction''
operation cannot be iterated more than $n-1$ times without changing the area.
\end{proof}
\noindent
The rooted graph approach fully overcomes 
the issues of the multiple-problem aspect of the servant that 
we ran into when we wanted to use Theorem~\ref{capdec} for
designing algorithms.
\\[.5\baselineskip]
{\bf Further decomposing the master---templates.}
Suppose the master-graph $G_{V_1\cup U}$ has a
node cutset separation $(V'_1,U',V'_2)$ with grouping $\cU'$
and $Z\subseteq V'_1\cup U'$ and we want to use that for a 
master/servant decomposition of the master.
Since the factor $\Service_{\,\lazy\cU}$ of the extra term
can be virtually anything (see Fact~\ref{servantrecord}),
we only do that if the separation {\em fits} $\cU$, which means that:
\begin{enumerate}[(i)]
\itemsep=0cm
\item
$U\subseteq V'_1\cup U'$ or  $U\subseteq V_2'\cup U'$.
\item
if $U$ meets $V'_2$ and $X\in \cU$ meets $U'$,  then
$X\cap U'$ is the union of members of $\cU'$.
\end{enumerate}
When the separation does fit $\cU$, we decompose the master as follows:
The master-of-the-master has graph $G_{V'_1\cup U'}$, root $Z$, and
objective function:
\\[.5\baselineskip]
\mbox{}\hfill
$(\Service_{\,\lazy\cU'}\circ {\Hom{{\cU'}}})(S'_R)\ +\ w(S'_R)\ +
\begin{cases}
\ (\Service_{\,\lazy\cU}\circ{\Hom{\cU}})(S'_R)&\ \text{if $U$ does not meet $V'_2$,}\\
\hfill 0\hfill &\ \text{if $U$ does meet $V'_2$,}
\end{cases}$\hspace*{7mm}\mbox{}
\\[.5\baselineskip]
and the servant-of-the-master has graph $G_{V'_2\cup\,\lazy{\cU'}}$, root $\lazy{\cU'}$,
and objective function:
\\[.5\baselineskip]
\mbox{}\hfill
$w(V'_2\cap S_T)\ +\ 
\begin{cases}
\hfill 0\hfill &\ \text{if $U$ does not meet $V'_2$,}\\
\ (\Service_{\,\lazy\cU}\circ{\Hom{\cU}})(S'_R)&\ \text{if $U$ does meet $V'_2$}.
\end{cases}$\hspace*{7mm}
\\[.5\baselineskip]
If we next also decompose the master-of-the-master and
servant-of-the-master and keep repeating that, we will accumulate more and more
extra terms of the form $\Service_{\,\lazy\cA}\circ{\Hom{\cA}}$.
This leads to {\em templates}: triples $(G,Z,\Omega)$ where $(G,Z)$ is a rooted
graph and $\Omega$ a collection of regions in $G$.
Also note that the presence of these extra terms has no effect on how rooted graphs
are decomposed; the extra terms only prevent some separations
to lead to decompositions.
This means that (\ref{size}) and Lemma~\ref{graphcount} will still apply.
\\[.5\baselineskip]
Suppose we are given a collection $\Omega$ of regions in $G$ and
for each of regions $\cA\in \Omega$: a pattern with node set $\lazy\cA$,
a graph isomorphism $X \rightarrow \lazynodeA{\cA}{X}$ between
$G_\cA$ and that pattern,
and a real-valued function $\sigma_{\lazy\cA}$ on $\stable{\lazy\cA}$.
Define, for $\cB\subseteq \cA$,
the set $\lazy{\cB}_{\cA}=\{\lazynodeA{\cA}{X}\,:\,X\in \cB\}$,
and for $S\in \stable{G}$, the set
${\Hom{\cA}}(S)=\{\lazynodeA{\cA}{X}\,:\, X\in \cA, S'\cap X\neq \emptyset\}$.
We consider the following problem.
\\[.5\baselineskip]
\indent
Find in $G$, for each stable set $R$ in $Z$,\\
\indent
\indent
a stable set $S_R$ with $Z\cap S_R=R$, that maximizes
$\displaystyle{w(S_R)+
\sum_{\cA\in\Omega} (\sigma_{\lazy\cA}\circ {\Hom{\cA}})(S_R)}.$
\\[.1\baselineskip]
\mbox{}\hfill
({\em Stable set problem on a template})
\\[.5\baselineskip]
Suppose also that our node cutset separation $(V_1,U,V_2)$ with $\cU$ and
$Z\subseteq V_1\cup U$ fits each region in $\Omega$.
Then the maxima in the stable set problem on a template are the same as the
maxima in the following problem.
\\[.5\baselineskip]
\indent
Find in $G_{V_1\cup U}$, for each stable set $R$ in $Z$,\\
\indent
\indent
a stable set $S'_R$ with $Z\cap S'_R=R$, that maximizes\\
\mbox{}\hfill
$\displaystyle w(S'_R)+ \sum_{\cA\in\Omega, V_2\cap (\cup\cA)=\emptyset}
(\sigma_{\lazy\cA}\circ {\Hom{\cA}})(S'_R)\ +\ \correct{}(S'_R)$,\hspace*{7mm}
\\[-.2\baselineskip]\mbox{}\hfill
({\em Master for a template})
\\[.5\baselineskip]
where $\correct{}=\Service_{\lazy\cU}\circ{\Hom{\cU}}$ and
$T \mapsto \Service_{\lazy\cU}(T)$ on $\stable{\lazy\cU}$
is given by the maxima in:
\\[.5\baselineskip]
\indent
Find in $G_{V_2\cup\lazy\cU}$, for each stable set $T$ in $\lazy\cU$,\\
\indent
\indent
a stable set $S_T$ with $S_T\cap Z=T$, that maximizes\\
\mbox{}\hfill
$\displaystyle
w(V_2\cap S_T)+
\sum_{\cA\in\Omega, V_2\cap (\cup\cA)\neq\emptyset} (\sigma_{\lazy\cA}\circ {\Hom{\cA}})(S_T).$\hspace*{7mm}
\\[.1\baselineskip]\mbox{}\hfill
({\em Servant for a template})
\\[.5\baselineskip]
Using templates, formula (\ref{maxformula}) extends to:
\begin{gather}
\max_{S_R \in \stable{G}, Z\cap S_R=R}
{w(S_R)\ +\sum_{\cA\in\Omega} (\sigma_{\lazy\cA}\circ {\Hom{\cA}})(S_R)}=\hspace*{6cm}\nonumber \\
=
\ \ \max_{S'_R\in \stable{G_{V_1\cup U}}, Z\cap S'_R=R}
w(S'_R)\ + \sum_{\cA\in\Omega, V_2\cap (\cup\cA)=\emptyset}
(\sigma_{\lazy\cA}\circ {\Hom{\cA}})(S'_R)+ \Correct{}(S'_R),\\
\intertext{where\ $\Correct{}=\Service_{\lazy\cU}\circ {\Hom{\cU}}$ and}
\label{tempservant}
\max_{S_R \in \stable{G},\, Z\cap S_R=R}
\Service_{\,\lazy\cU}(S_T)=
w(V_2\cap S_T)\ + \sum_{\cA\in\Omega, V_2\cap (\cup\cA)\neq\emptyset}
(\sigma_{\lazy\cA}\circ {\Hom{\cA}})(S_T).
\end{gather}
Also Fact~\ref{formula} generalizes to templates.
\begin{fact}\label{rootformula}
Suppose we are given:
\begin{enumerate}[(1)]
\itemsep 0mm
\item
a solution $(S_T\,:\,T\in \stable{\lazy\cU})$ of the servant for a template,
\item
the function
$\Service_{\lazy\cU}: T\mapsto
w(V_2\cap S_T)+
\sum_{\cA\in\Omega,\ V_2\cap (\cup\cA)\neq\emptyset}
(\sigma_{\lazy\cA}\circ {\Hom{\cA}})(S_T)$
on $\stable{\lazy{\cU}}$,
\item
a solution $(S'_R\,:\,R\in \stable{Z})$ of the master for a template with
$\Correct=\Service_{\lazy\cU}\circ {\Hom{\cU}}$,
\item
$S_R=S'_R\cup (V_2\cap S_{{\Hom{\cU}}(S'_R})$ for each $R\in \stable{Z}$.
\end{enumerate}
Then
$(S_R \,:\,R\in \stable{Z})$ is a solution of the stable set problem on a template.
\end{fact}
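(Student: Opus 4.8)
The plan is to lift the proof of Fact~\ref{formula} to templates: I would split the argument into a feasibility check for the constructed sets $S_R$ and a value-matching computation that collapses the template objective of $S_R$ onto the master objective of $S'_R$, after which optimality follows from the already-established template version of (\ref{maxformula}).

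First I would check feasibility. Fix $R\in\stable{Z}$ and abbreviate $T=\Hom{\cU}(S'_R)$, so that $S_R=S'_R\cup(V_2\cap S_T)$. Since $(S_T)$ solves the servant, $\lazy\cU\cap S_T=T$, whence $\Hom{\cU}(S'_R)\cup(V_2\cap S_T)=T\cup(V_2\cap S_T)=S_T$ is stable in $G_{V_2\cup\,\lazy\cU}$. The equivalence ``$S'\cup S''$ is stable in $G$ iff $\Hom{\cU}(S')\cup S''$ is stable in $G_{V_2\cup\,\lazy\cU}$'', which underlies the servant, then gives $S_R\in\stable{G}$; and since $Z\subseteq V_1\cup U$ is disjoint from $V_2$, we get $Z\cap S_R=Z\cap S'_R=R$. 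So each $S_R$ is a feasible stable set for its root.

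The heart of the matter is the value matching, and here I would prove two identities about the region indicators $\Hom{\cA}$. For $\cA\in\Omega$ with $V_2\cap(\cup\cA)=\emptyset$ we have $\cup\cA\subseteq V_1\cup U$, so $S_R$ and $S'_R$ meet exactly the same members of $\cA$ (they differ only inside $V_2$), giving $\Hom{\cA}(S_R)=\Hom{\cA}(S'_R)$. For $\cA\in\Omega$ with $V_2\cap(\cup\cA)\neq\emptyset$ I would show $\Hom{\cA}(S_R)=\Hom{\cA}(S_T)$: fits-condition~(i) forces $\cup\cA\subseteq V_2\cup U$, so writing each $X\in\cA$ as $(X\cap U)\cup(X\cap V_2)$, the $V_2$-parts of $S_R$ and $S_T$ on $X$ coincide because $S_R\cap V_2=V_2\cap S_T$, while fits-condition~(ii) makes $X\cap U$ a union of members of $\cU$, so that $S'_R$ meets $X\cap U$ iff $T=\Hom{\cU}(S'_R)$ meets the lazy copies of those members, iff $S_T$ meets $X\cap U$ (as $S_T\cap U=T$). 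Hence $X$ is hit by $S_R$ exactly when it is hit by $S_T$. Combining the two identities with the additivity of $w$ on the disjoint union $S_R=S'_R\cup(V_2\cap S_T)$, the template objective of $S_R$ separates into a master part and a servant part; the servant part is precisely $\Service_{\,\lazy\cU}(T)$ by item~(2), and $\Correct(S'_R)=\Service_{\,\lazy\cU}(\Hom{\cU}(S'_R))=\Service_{\,\lazy\cU}(T)$, so the template objective of $S_R$ equals the master objective of $S'_R$.

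To finish, I would invoke optimality. Because $S'_R$ maximizes the master objective for root $R$ and, by the template extension of (\ref{maxformula}), the maximum master value equals the maximum template value over stable sets $S$ of $G$ with $Z\cap S=R$, the feasible set $S_R$ attains that template maximum; thus $(S_R:R\in\stable{Z})$ solves the stable set problem on the template. The hard part will be the identity $\Hom{\cA}(S_R)=\Hom{\cA}(S_T)$ for regions meeting $V_2$: this is exactly where both fits conditions are needed---(i) to rule out a $V_1$-part of $\cA$ and (ii) to guarantee that hitting a member $X$ of $\cA$ is faithfully detected by the lazy transversal $\lazy\cU$---and verifying that no member of $\cA$ straddles the separation in a way that breaks this correspondence is the delicate point.
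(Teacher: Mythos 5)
Your proof is correct and takes essentially the same route as the paper: the paper states Fact~\ref{rootformula} without an explicit proof, as the template analogue of Fact~\ref{formula} following from the preceding discussion, and your feasibility check together with the two identities $\Hom{\cA}(S_R)=\Hom{\cA}(S'_R)$ for regions avoiding $V_2$ and $\Hom{\cA}(S_R)=\Hom{\cA}(S_T)$ for regions meeting $V_2$ (using fitting condition (i) to confine $\cup\cA$ to $V_2\cup U$ and condition (ii) so that hitting $X\cap U$ is faithfully detected through the transversal $\lazy\cU$) supplies exactly the details that discussion leaves implicit. Your final appeal to the template version of (\ref{maxformula}) for optimality is the same step the paper relies on, so nothing is missing.
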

\noindent
And we have the following.
\begin{fact}
The master for a template and the servant for a template are both
stable set problems on a template.
\end{fact}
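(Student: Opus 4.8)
The plan is purely structural: for each of the two problems I will exhibit an explicit template---a rooted graph, a weighting, and for every region a pattern, a pattern-isomorphism and a function $\sigma$---and then check that its objective is \emph{literally} the one written in the Master, resp.\ the Servant, for a template. No maximization identity has to be re-proved here: the equality of the optima is already recorded in the extension of formula~(\ref{maxformula}) displayed above and in Fact~\ref{rootformula}. Thus the whole content is bookkeeping, plus the verification that the collections I introduce are genuine regions carrying the right patterns.

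For the master I take the rooted graph $(G_{V_1\cup U},Z)$ (legitimate, since $Z\subseteq V_1\cup U$), the weighting $w$ restricted to $V_1\cup U$, and the region collection $\Omega'=\{\cA\in\Omega: V_2\cap(\cup\cA)=\emptyset\}\cup\{\cU\}$. Each $\cA$ of the first kind has $\cup\cA\subseteq V_1\cup U$, so it is still a region of $G_{V_1\cup U}$; I keep its pattern, isomorphism and $\sigma_{\lazy\cA}$, and since $\cup\cA\subseteq V_1\cup U$ the value $\Hom{\cA}(S'_R)$ is unchanged. For the extra region I take $\cU$ with pattern $G_{\lazy\cU}$, isomorphism $X\mapsto\lazynode X$ and function $\Service_{\,\lazy\cU}$. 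The only thing to check is that $\cU$ really is a region of $G_{V_1\cup U}$, and this is immediate: ``$\cU$ is a grouping'' already says that any two members of $\cU$ are nonadjacent or fully adjacent, which is exactly the defining property of a region. With these data the template objective is $w(S'_R)+\sum_{\cA\in\Omega,\,V_2\cap(\cup\cA)=\emptyset}(\sigma_{\lazy\cA}\circ\Hom{\cA})(S'_R)+(\Service_{\,\lazy\cU}\circ\Hom{\cU})(S'_R)$, i.e.\ exactly the Master for a template. (Should $\cU$ coincide as a set with a retained $\cA$, I simply keep it as a separate member of $\Omega'$; a region in a template is tagged with its own $\sigma$ and the objective is a sum over members, so this is harmless.)

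For the servant I take the rooted graph $(G_{V_2\cup\lazy\cU},\lazy\cU)$, the weighting equal to $w$ on $V_2$ and $0$ on $\lazy\cU$ (so that $w(V_2\cap S_T)$ is the weight of $S_T$), and as regions the translates of those $\cA\in\Omega$ with $V_2\cap(\cup\cA)\neq\emptyset$. For such an $\cA$, fitting condition (i) forces $\cup\cA\subseteq V_2\cup U$, and fitting condition (ii) forces $X\cap U$ to be a union of members of $\cU$ for every $X\in\cA$. I may therefore translate each $X\in\cA$ to $\widetilde X=(X\cap V_2)\cup\{\lazynode Y: Y\in\cU,\ Y\subseteq X\cap U\}\subseteq V_2\cup\lazy\cU$, and set $\widetilde\cA=\{\widetilde X:X\in\cA\}$. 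The sets $\widetilde X$ are nonempty (if $X\subseteq U$ then $X$ is a nonempty union of $\cU$-members) and pairwise disjoint, and $X\mapsto\widetilde X$ is a bijection; I reuse the pattern node set $\lazy\cA$, the function $\sigma_{\lazy\cA}$, and the isomorphism $\widetilde X\mapsto\lazynodeA{\cA}{X}$.

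The main work---and the only real obstacle---is to verify that $\widetilde\cA$ is a region of $G_{V_2\cup\lazy\cU}$ and that $X\mapsto\widetilde X$ is a pattern isomorphism $G_\cA\to G_{\widetilde\cA}$. This rests on the grouping property of $\cU$: a representative $\lazynode Y$ has the same $V_2$-neighbours as the whole group $Y$ (since $Y$ is fully adjacent to $N_G(Y)\cap V_2$), and two representatives are adjacent exactly when their groups are. Using this, for adjacent $X,X'\in\cA$ (hence fully adjacent, as $\cA$ is a region) one checks case by case---$V_2$--$V_2$, $\lazy\cU$--$V_2$, $\lazy\cU$--$\lazy\cU$---that every pair of nodes of $\widetilde X,\widetilde X'$ is an edge, so $\widetilde X,\widetilde X'$ are fully adjacent; symmetrically, nonadjacent members translate to nonadjacent members. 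Hence $\widetilde\cA$ is a region and the translation preserves the pattern, so $\sigma_{\lazy\cA}$ transports verbatim. Finally, for a stable set $S_T$ of $G_{V_2\cup\lazy\cU}$ one has $S_T\cap\widetilde X\neq\emptyset$ iff $S_T$ meets $X\cap V_2$ or contains a representative of a group inside $X$---precisely the condition under which $\Hom{\cA}$ records $X$---so $\Hom{\widetilde\cA}(S_T)$ equals the intended $\Hom{\cA}(S_T)$ under the identification $\lazynodeA{\cA}{X}\leftrightarrow\widetilde X$. Consequently $(\sigma_{\lazy\cA}\circ\Hom{\cA})(S_T)=(\sigma_{\lazy\cA}\circ\Hom{\widetilde\cA})(S_T)$, the template objective becomes exactly the Servant for a template, and both problems are stable set problems on a template.
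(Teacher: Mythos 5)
Your proposal is correct and takes essentially the same route as the paper: for the master you retain the regions disjoint from $V_2$ and adjoin $\cU$ itself (with $\sigma_{\lazy{\cU}}=\Service_{\lazy{\cU}}$), and for the servant your translate $\widetilde X$ is precisely the paper's replacement of each member $X$ by its intersection $X\cap(V_2\cup\lazy{\cU})$, which the paper observes leaves the formula in (\ref{tempservant}) unaffected. The only difference is that you spell out verifications the paper leaves implicit, and your ``main work'' is in fact easier than you suggest: since $\widetilde X\subseteq X$ and $G_{V_2\cup\,\lazy{\cU}}$ is an induced subgraph of $G$, full adjacency (resp.\ nonadjacency) of translated members is automatic, with the grouping property only needed for nonemptiness of the $\widetilde X$ and the fact that $\Hom{\cA}$ is unchanged on stable sets contained in $V_2\cup\lazy{\cU}$.
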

\begin{proof}
For the master this is obvious:
each region that does not meet $V_2$---and that includes also $\cU$---is
a region in the master-graph.
For the servant the situation is slightly subtle.
The regions in $\Omega$ that meet $V_2$ are regions
of $G_{V_2\cup U}$, but not of $G_{V_2\cup\, \lazy\cU}$.
However, if we replace the members of any such region
by their intersection with $V_2\cup\, \lazy\cU$,
we do get a region of $G_{V_2\cup\, \lazy\cU}$;
for the formula in (\ref{tempservant}) that replacement has no effect.
\end{proof}
\noindent
This extension of the master/servant decomposition
to stable set problems on templates ``closes'' our model.
\begin{theorem}\label{th:templ-stablesets}
Let $\cR$ be a family of templates closed under master/servant decomposition and
$\cP$ be a subfamily of $\cR$.
If there is a polynomial time that finds a fitting node cutset separation
for any template from $\cR\sm\cP$
and there is a polynomial time algorithm
for the stable set problem on templates from $\cP$,
then there exists a polynomial time algorithm for the stable set problem on templates from $\cR$.
\end{theorem}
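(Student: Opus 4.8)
The plan is to turn the closure structure into a recursive divide-and-conquer algorithm that uses Fact~\ref{rootformula} to reassemble solutions and Lemma~\ref{graphcount} to bound the recursion. On input a template $T=(G,Z,\Omega)\in\cR$ with $G$ on $n$ nodes I proceed as follows. If $T$ is trivial (empty area) I return the forced solution $S_R=R$ for every stable set $R$ in $Z$; if $T\in\cP$ I invoke the assumed polynomial-time algorithm. Otherwise $T\in\cR\sm\cP$ is non-trivial, so by hypothesis I find in polynomial time a fitting node cutset separation $(V_1,U,V_2)$ with $\cU$, which produces the master template on $G_{V_1\cup U}$ with root $Z$ and the servant template on $G_{V_2\cup\lazy\cU}$ with root $\lazy\cU$; both are again in $\cR$ because $\cR$ is closed under master/servant decomposition. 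I first recurse on the servant, read off from its solution the function $\Service_{\lazy\cU}$ and hence $\Correct=\Service_{\lazy\cU}\circ\Hom{\cU}$, then recurse on the master (whose objective uses this $\Correct$), and finally combine the two results through Fact~\ref{rootformula}, returning $S_R=S'_R\cup(V_2\cap S_{\Hom{\cU}(S'_R)})$ for every stable set $R$ in $Z$.

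Correctness is an induction on the recursion whose inductive step is exactly Fact~\ref{rootformula}: from a solution of the servant-for-a-template, the induced $\Service_{\lazy\cU}$, and a solution of the master-for-a-template assembled with $\Correct=\Service_{\lazy\cU}\circ\Hom{\cU}$, the sets $S_R$ above solve the stable set problem on $T$. Both base cases -- trivial templates, where $S_R=R$ is forced, and templates in $\cP$, handled by the assumed algorithm -- are correct, so the induction closes.

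For the running time I read the tree of recursive calls as a sequence of master/servant decompositions of $(G,Z)$. Since the extra regions in $\Omega$ only restrict which separations fit and never change how the underlying rooted graph splits, Fact~\ref{size} and Lemma~\ref{graphcount} apply unchanged. The leaves of this tree constitute a master/servant decomposition-list, so by Lemma~\ref{graphcount} there are at most $n$ non-trivial and at most $2n^2$ trivial ones; as the tree is binary, the total number of recursive calls is $O(n^2)$. Each call performs only polynomial work -- a base-case solve, or the computation of a fitting separation together with the bookkeeping of Fact~\ref{rootformula} -- provided the templates stay polynomially encoded. For this, each decomposition adjoins at most one new region to the master's collection (the current $\cU$, carrying the table $\Service_{\lazy\cU}$), so $|\Omega|$ grows by at most one per step; and every root's list of stable sets -- hence every solution and every table $\sigma_{\lazy\cA}$ -- stays polynomially sized because the a priori bound $\alphabound$ on the stability number of roots keeps the number of stable sets in each root polynomial. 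Multiplying $O(n^2)$ calls by polynomial per-call cost yields a polynomial-time algorithm.

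The crux is the bound on the recursion, and this is exactly where Lemma~\ref{graphcount} -- through the area-invariance of Fact~\ref{size} -- does the real work: without it, iterating the decomposition ``on the servant side'' risks exponential blow-up, which was the original motivation for the rooted-graph and template formalism. The point demanding care is that every template passed to the decomposition step be non-trivial, so that trivial templates are intercepted by the base case rather than handed to the fitting-separation subroutine (which has no separation to return for them); this guarantees that each recursive call is a single legal step of a decomposition-list and that Lemma~\ref{graphcount} applies verbatim.
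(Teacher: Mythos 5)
Your proposal is correct and is essentially the paper's own argument: solving the servant first to make $\Service_{\lazy\cU}$ explicit, then the master, and combining via Fact~\ref{rootformula}, with the iteration count bounded through Fact~\ref{size} and Lemma~\ref{graphcount}, is exactly what the paper does---your servant-first recursion is just the paper's right-to-left processing of its ordered list of template problems, written as a call tree instead of a stack. Your explicit interception of trivial templates and the bookkeeping that $|\Omega|$ grows by at most one region per decomposition (with tables kept polynomial via the bound on the roots' stability numbers) match the paper's implicit assumptions, so there is no gap.
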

\begin{proof}
Suppose we are given a stable set problem on a 
template $(\initial G,\initial Z,\initial \Omega)\in\cR$.
To solve it, we keep a right/left ordered list of stable set problems on
templates $(G,Z,\Omega)$ such that the underlying list of rooted graphs
$(G,Z)$ is a master/servant decomposition list
of rooted graphs. Initially this list consists of just the single 
stable set problem on $(\initial G,\initial Z,\initial \Omega)$.

For any template $(G,Z,\Omega)$ on the list,
the regions $\cA\in \Omega$ all come with a real valued function
$\sigma_{\lazy\cA}$ on $\stable{\lazy\cA}$, which is either given by
an explicit listing of all function values $\sigma_{\lazy\cA}(T)$
with $T\in \stable{\lazy\cA}$ or by 
$\sigma_{\lazy\cA}=\Service_{\lazy\cA}$, where
the function values of $\Service_{\lazy\cA}$ are the
maxima of a stable set problem with
template $(G',\lazy\cA,\Omega')$ further down the list
(so to the right of where $(G,Z,\Omega)$ is). As soon
as the solution of that stable set problem on $(G',\lazy\cA,\Omega')$
comes available, we store the solution by an explicit listing of the values
$\Service_{\lazy\cA}(T)$ and remove
the stable set problem on $(G',\lazy\cA,\Omega')$ from the list.
As of that moment $\sigma_{\lazy\cA}$ is represented by the
stored explicit listing.
To solve that stable set problem on $(G',\lazy\cA,\Omega')$,
we need an explicit listing for each $\sigma_{\lazy\cA'}$ with
$\cA'\in\Omega'$. The right most problem on the list has that property.
Therefore, we always try find a solution to that right most problem.

So the algorithm is to iterate the following procedure:
Remove the right most stable set problem from the list
and either decompose that problem and place the master and servant
in that order at the end of the list, or solve the removed problem and store its
solution.
By the given decomposition algorithm for $\cR\sm\cP$ and the given optimization
algorithm for $\cP$ we can carry out each iteration in 
polynomial time.
By Lemma~\ref{graphcount} we can iterate the procedure at most $2n^2+n$
times if $\initial G$ has $n$ nodes.
So after at most $2n^2+n$ iterations the list is empty.
That means that we stored an explicit listing of the solution of the first
problem in the list. Since throughout
the entire algorithm the first problem keeps the
initial root $\initial Z$, that explicit listing solves the original
stable set problem on 
$(\initial G,\initial Z,\initial \Omega)$.
\end{proof}
\noindent
{\bf Linearized decomposition.}
Instead of carrying around the nonlinear terms $\sigma_{\lazy\cA}\circ\Hom{\cA}$, we can 
``linearize'' them by adding suitably weighted nodes that are adjacent to nodes in $\cup \cA$.
We are free in choosing which terms  $\sigma_{\lazy\cA}\circ\Hom{\cA}$ we eliminate in this way and when we do that.
Our algorithms in this paper either not use the option at all or do it at each decomposition at once, as part of the decomposition procedure.
\\[.5\baselineskip]
We say that a triple $[H,\gamma,\sigma]$ {\em linearizes}
a real-valued function $d$ on $\stable{A}$,
if $H=(W,F)$ is a graph with $A\subseteq W$,
$\gamma\in\R^W$, $\sigma\in\R$ such that
for each $T\in \stable{A}$,
the maximum value $\gamma(S_T)$ of stable set in $H$ with $A\cap S_T=T$
is equal to $d(T)-\sigma$.
\\[.5\baselineskip]
To see the relevance of this definition,
suppose that $W\cap V=U$ and that
$[H,\gamma,\sigma]$ linearizes $\sigma_{\lazy\cU}\circ {\Hom{\cU}}$.
Consider the graph $G_{V_1\cup U}\cup H=(V_1 \cup W,E\cup F)$.
Define $\gamma_v=0$ if $v\in V\sm W$.
\begin{fact}
$S^+\in\stable{G_{V_1\cup U}\cup H}$ with $Z\cap S^+=R$ maximizes
$w(V\cap S^+)+\gamma(W\cap S^+)$, then 
$S_R=V \cap S^+$ is a stable set in $G$ with $Z\cap S_R=R$ that maximizes
$w(S_R)+ (\sigma_{\lazy\cU}\circ \Hom{\cU})(S_R).$
\end{fact}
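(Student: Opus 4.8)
The plan is to set up a correspondence, value-preserving up to the additive constant $\sigma$, between stable sets of the enlarged graph $G_{V_1\cup U}\cup H$ and stable sets of the master-graph $G_{V_1\cup U}$, and then read off the claim. I read the asserted maximum for $S_R$ as being over the stable sets $S$ of $G_{V_1\cup U}$ with $Z\cap S=R$ (these are exactly the stable sets of $G$ contained in $V_1\cup U$, which is where $w(S)+(\sigma_{\lazy\cU}\circ\Hom{\cU})(S)$ naturally lives). For a candidate set $S^+$ I write $S'=V\cap S^+=(V_1\cup U)\cap S^+$ and $S_H=W\cap S^+$ for its two traces.

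First I would prove that $S^+\mapsto(S',S_H)$ is a bijection between $\stable{G_{V_1\cup U}\cup H}$ and the pairs with $S'\in\stable{G_{V_1\cup U}}$, $S_H\in\stable{H}$ and $S'\cap U=S_H\cap U$. Because $W\cap V=U$, the union has node set $V_1\cup W$, the two edge sets meet only inside $U$, and no edge joins $V_1$ to $W\sm U$ (such an edge lies in neither $G_{V_1\cup U}$ nor $H$); so the only possible conflict when gluing a compatible pair is inside $U$, where both pieces induce the same stable set. Along the way I record the termwise identities $V\cap S^+=S'$ and $W\cap S^+=S_H$, whence $w(V\cap S^+)+\gamma(W\cap S^+)=w(S')+\gamma(S_H)$.

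Next I would maximize fibrewise over the choice of $S'$. Fixing a stable $S'$ with $Z\cap S'=R$ and putting $T=S'\cap U$, the decomposition says the $S^+$ over $S'$ are exactly the $S'\cup S_H$ with $S_H\in\stable{H}$, $S_H\cap U=T$, and the objective on them is $w(S')+\gamma(S_H)$; so the fibre maximum is $w(S')+(d(T)-\sigma)$ by the linearization hypothesis (with $A=U$ and $d=\sigma_{\lazy\cU}\circ\Hom{\cU}$). Since every $X\in\cU$ lies in $U$, we have $\Hom{\cU}(S')=\Hom{\cU}(T)$, hence $d(T)=(\sigma_{\lazy\cU}\circ\Hom{\cU})(S')$ and the fibre maximum equals $\bigl(w(S')+(\sigma_{\lazy\cU}\circ\Hom{\cU})(S')\bigr)-\sigma$. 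Because $\sigma$ is the same constant for every fibre, maximizing over $S'$ shows that $M_+:=\max_{Z\cap S^+=R}\bigl(w(V\cap S^+)+\gamma(W\cap S^+)\bigr)$ equals $M-\sigma$, where $M:=\max_{Z\cap S'=R}\bigl(w(S')+(\sigma_{\lazy\cU}\circ\Hom{\cU})(S')\bigr)$. For a maximizing $S^+$ the set $S_R=V\cap S^+=S'$ is stable in $G$, has $Z\cap S_R=Z\cap S^+=R$, and satisfies $M-\sigma=M_+=w(V\cap S^+)+\gamma(W\cap S^+)\le w(S_R)+(\sigma_{\lazy\cU}\circ\Hom{\cU})(S_R)-\sigma\le M-\sigma$ (the first inequality since $S^+$ lies in the fibre over $S_R$, the second since $M$ is the master maximum); equality throughout gives $w(S_R)+(\sigma_{\lazy\cU}\circ\Hom{\cU})(S_R)=M$, i.e. $S_R$ attains the master maximum.

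The one delicate point, which I expect to be the main obstacle, is the matching of domains in the fibrewise step: the achievable traces $T=S'\cap U$ are precisely the stable sets of $G_U$, whereas $d$ is defined on $\stable{A}=\stable{H_U}$, so for the two maxima above to range over the same $T$ I need $H$ and $G$ to induce the same graph on $U$. With $W\cap V=U$ this is the natural reading of ``$[H,\gamma,\sigma]$ linearizes $\sigma_{\lazy\cU}\circ\Hom{\cU}$'', and I would state it explicitly at the outset. Everything else is bookkeeping; the only genuinely substantive feature is that the linearization constant $\sigma$ is subtracted uniformly across fibres, which is exactly what lets the $\arg\max$ pass through unchanged.
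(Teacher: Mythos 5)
Your proof is correct and matches the paper's intent: the paper states this Fact without any proof, treating it as immediate from the definition of linearization, and your argument---gluing stable sets of $G_{V_1\cup U}\cup H$ from compatible pairs $(S',S_H)$ agreeing on $U$, maximizing fibrewise over $S_H$ via the linearization identity, and observing that the constant $\sigma$ is subtracted uniformly so the $\arg\max$ passes through---is exactly the straightforward verification being left implicit. The ``delicate point'' you flag is also well taken: one does need $H_U=G_U$ (so that the traces $T=S'\cap U$ range over the same family $\stable{G_U}$ on which $d=\sigma_{\lazy\cU}\circ\Hom{\cU}$ is defined and each fibre is nonempty), and the paper indeed builds this requirement in explicitly a few lines later in its definition of ``$H$ linearizes $G_U$ with $\cU$''.
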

\noindent
So we can use a triple $[H,\gamma,\sigma]$ linearizing $\sigma_{\lazy\cU}\circ {\Hom{\cU}}$
to reformulate the master problem so that the objective function is linear.
This linearization comes with the expense of adding nodes (unless $W=U$).
\\[.5\baselineskip]
A canonical way to linearize functions on $\stable{A}$
with $A\subseteq V$ is to ``add a record of $\stable{A}$ to $G_A$''.
{\em Adding a record of $\stable{A}$} to a graph $G$, means
to add a clique $\record{A}$ (the {\em record\,})
consisting of new nodes $\recordnode{T}$, one for each $T\in \stable{A}$,
such that each $\recordnode{T}$ is fully adjacent to $A\sm T$ and nonadjacent
to $T$ and to $V\sm A$.
We call the new graph the {\em record graph} of $G$ and $A$,
and we  denote it by $G(A)$. Records play a major role in this paper. The following fact only uses
the record graph $G_A(A)$ of $G_A$ and $A$.
\begin{fact}\label{record}
Let $d$ be a real-valued function on $\stable{G_A}$ and
let $\gamma_v=0$ if $v\in U$ and $\gamma_{\recordnode{T}}=d(T)$ if $T\in \stable{U}$.
Then $[G_A(A),\gamma,0]$ linearizes $d$ if and only of $d$ is nonnegative and
inclusion-wise non-increasing on $\stable{G_A}$.
\end{fact}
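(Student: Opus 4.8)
The plan is to turn the defining property of ``linearizes'' into an explicit identity and read off the two conditions. By definition, $[G_A(A),\gamma,0]$ linearizes $d$ exactly when, for every $T\in\stable{G_A}$,
$$\max\{\gamma(S_T)\,:\,S_T\in\stable{G_A(A)},\ A\cap S_T=T\}=d(T).$$
So the whole proof reduces to computing the left-hand maximum and comparing it with $d(T)$.

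First I would describe the stable sets $S_T$ of $G_A(A)$ with $A\cap S_T=T$. Since the record nodes $\record{A}$ form a clique, $S_T$ contains at most one of them. Within $A$ a record node $\recordnode{T'}$ is nonadjacent to exactly $T'$ (it is fully adjacent to $A\sm T'$), so $\recordnode{T'}$ can be adjoined to $T$ while keeping a stable set precisely when $T\cap(A\sm T')=\emptyset$, i.e. when $T\subseteq T'$; and of course $T'$ must itself lie in $\stable{G_A}$. As every node of $A$ has $\gamma$-weight $0$, the weight $\gamma(S_T)$ is $0$ if $S_T$ uses no record node and is $d(T')$ if it uses $\recordnode{T'}$. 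Hence
$$\max\{\gamma(S_T)\,:\,A\cap S_T=T\}=\max\Big(0,\ \max_{T'\in\stable{G_A},\,T'\supseteq T}d(T')\Big).$$
I would then record the small but decisive observation that $T'=T$ is always admissible here, so $d(T)$ itself always occurs inside the inner maximum.

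It then remains to check that this quantity equals $d(T)$ for every $T\in\stable{G_A}$ if and only if $d$ is nonnegative and inclusion-wise non-increasing. For the ``if'' direction: nonnegativity gives $0\le d(T)$, and being non-increasing gives $d(T')\le d(T)$ for all $T'\supseteq T$; thus every competitor in the maximum (the constant $0$ and each $d(T')$) is at most $d(T)$, while $d(T)$ is attained at $T'=T$, so the maximum equals $d(T)$. For the ``only if'' direction: because the constant $0$ is one of the competitors, the identity forces $d(T)\ge 0$, i.e. nonnegativity; and because every $d(T')$ with $T'\supseteq T$ is a competitor, the identity forces $d(T)\ge d(T')$, i.e. that $d$ is non-increasing.

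The only real subtlety is pinning down the compatibility condition $T\subseteq T'$ for adjoining $\recordnode{T'}$, which follows directly from the adjacency rule of the record construction, together with the fact that the choice $T'=T$ is always available. That last point is exactly what lets the ``only if'' direction extract \emph{both} properties at once: nonnegativity from the competitor $0$, and monotonicity from the competitors $T'\supsetneq T$.
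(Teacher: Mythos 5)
Your proof is correct and follows essentially the same route as the paper's: enumerate the stable sets of $G_A(A)$ meeting $A$ in a given $T$ (namely $T$ itself with weight $0$, and $T\cup\{\recordnode{T'}\}$ with weight $d(T')$ for each $T'\in\stable{G_A}$ containing $T$), then observe that the maximum equals $d(T)$ for all $T$ exactly when $d$ is nonnegative and inclusion-wise non-increasing. Your write-up is in fact slightly more careful than the paper's, making explicit both the adjacency argument for the condition $T\subseteq T'$ and the role of the competitor $T'=T$, and it silently corrects the paper's typo (the paper writes $d(T)\ge d(T')$ for $T'\subseteq T$ where $T'\supseteq T$ is meant).
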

\begin{proof}
For each $T\in \stable{U}$, the stable sets in $G(A)$ that meet $A$ in
$T$ are: the set $T$ with $\gamma(T)=0$,
and for each $T'\in \stable{A}$ with $T'\supseteq T$,
the set $T\cup\{\recordnode{T'}\}$ with
$\gamma(T\cup\{\recordnode{T'}\})=d(T')$.
The maximum of these weights is $d(T)$ if and only if
$d(T)\ge 0$ and $d(T)\ge d(T')$ for $T'\subseteq T$, as claimed.
\end{proof}
\noindent
Incidentally, Fact~\ref{record} characterizes the functions that can turn up as
solution $\Service_{\lazy\cU}$ of the servant: each
nonnegative and inclusion-wise non-increasing function on $\stable{\lazy\cU}$.
\begin{fact}\label{servantrecord}
The solution $\Service_{\lazy\cU}$ of the servant is nonnegative and inclusion-wise non-increasing
on $\stable{\lazy\cU}$. Moreover, every nonnegative, inclusion-wise non-increasing
function on the stable sets of a graph arises in this way.
\end{fact}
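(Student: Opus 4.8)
The plan is to treat the two assertions separately. The first---that $\Service_{\lazy\cU}$ is nonnegative and inclusion-wise non-increasing---should fall out directly from the defining maximization (\ref{servantformula}), by exhibiting feasible competitors. The second---that every such function is realized---I would obtain by building an explicit servant out of the record construction, so that the result becomes essentially a restatement of Fact~\ref{record}.

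For nonnegativity I would note that, for any $T\in\stable{\lazy\cU}$, the root set $T$ is itself a stable set in $G_{V_2\cup\lazy\cU}$ with $\lazy\cU\cap T=T$ and $V_2\cap T=\emptyset$; since it is one of the competitors in (\ref{servantformula}) and contributes objective value $w(V_2\cap T)=0$, we get $\Service_{\lazy\cU}(T)\ge 0$ regardless of the signs appearing in $w$. For monotonicity, take $T_1\subseteq T_2$ in $\stable{\lazy\cU}$ and let $S_{T_2}$ be optimal for $T_2$. Deleting from $S_{T_2}$ the root nodes in $T_2\sm T_1$ gives $S'=(V_2\cap S_{T_2})\cup T_1$, a subset of $S_{T_2}$ and hence stable, with $\lazy\cU\cap S'=T_1$ and $V_2\cap S'=V_2\cap S_{T_2}$. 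Thus $S'$ is feasible for $T_1$ with unchanged objective value, so $\Service_{\lazy\cU}(T_1)\ge w(V_2\cap S_{T_2})=\Service_{\lazy\cU}(T_2)$.

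For the converse, let $d$ be a nonnegative, inclusion-wise non-increasing function on $\stable{H}$, where $H$ has node set $A$. I would realize $d$ by taking the servant graph to be the record graph $H(A)$: set $\lazy\cU=A$ with the singleton grouping $\cU=\{\{a\}:a\in A\}$ (so the pattern is $H$ itself), let $V_2=\record{A}$, and put $w$ equal to $0$ on $A$ and $w_{\recordnode{T}}=d(T)$ on each record node. Because $w$ vanishes on the root, the servant objective $w(V_2\cap S_T)$ equals the total weight $\gamma(S_T)$ in Fact~\ref{record}, so that fact yields $\Service_{\lazy\cU}(T)=d(T)$ for every $T$---and this is precisely where the hypotheses on $d$ enter, since they make the self-extension $T\cup\{\recordnode{T}\}$ optimal among the stable sets meeting $A$ in $T$. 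To turn this into a genuine servant I would complete it to a node cutset separation: adjoin one isolated node as $V_1$, so that $|V_1\cup U|=|A|+1>|A|=|\cU|$, while $|V_2|>0$ and $V_1,V_2$ are nonadjacent; since the servant graph $G_{V_2\cup\lazy\cU}$ ignores $V_1$, the servant solution is unaffected.

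The two feasibility checks in the first assertion are routine; the only real content lies in the second, namely recognizing that the servant maximization with zero weight on the root is literally the linearization computed in Fact~\ref{record}, so that the record graph realizes any admissible $d$. The one point that needs care is supplying a bona fide node cutset separation---meeting the cardinality and nonadjacency conditions---which the isolated $V_1$-node resolves cleanly without touching the servant.
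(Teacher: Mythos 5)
Your proof is correct and follows essentially the same route as the paper: the monotonicity claims via exhibiting feasible competitors in the servant maximization (which the paper dismisses as obvious), and the converse by taking the servant graph so that $G_{V_2}$ is a record of $G_{\lazy\cU}$ and invoking Fact~\ref{record}. Your extra care in completing the construction to a bona fide node cutset separation via an isolated node in $V_1$ is a detail the paper leaves implicit, but it does not change the argument.
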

\begin{proof}
That $\Service_{\lazy\cU}$ is nonnegative and non-increasing is obvious.
For the second statement just take $G$ such that $G_{V_2}$ is a record of $G_{\lazy\cU}$ and
apply Fact~\ref{record}.
\end{proof}
\noindent
Actually, any function on $\stable{G_A}$ is the sum of a constant function, a
linear function and a nonnegative non-increasing function. So by
by toggling in $[G_A(A),\delta,0]$ the values $\delta_v$ with $v\in A$ (which are $0$ in
Fact~\ref{record}) and the third
entry (which is also $0$ in Fact~\ref{record}),
we can find linearizations for any function on $\stable{A}$.
\\[.5\baselineskip]
We say that $H=(W,F)$ {\em linearizes $G_U$ with $\cU$},
if $W\cap V=U$ and $H_U=G_U$, and 
for every nonnegative and inclusion-wise non-increasing function
$\sigma_{\lazy\cU}$ on $\stable{\lazy\cU}$,
there exist $\gamma\in\R^W$, $\sigma\in\R$, such that 
$[H,\gamma,\sigma]$ linearizes $\sigma_{\lazy\cU}\circ\Hom{\cU}$.
\\[.5\baselineskip]
So, instead of taking the term $(\Service_{\lazy\cU}\circ \Hom{\cU})(S_R)$
into the objective function of the master,
we can replace the master-graph by the
{\em linearized master-graph} $G_{V_1\cup U}\cup H$ and
the term $(\Service_{\lazy\cU}\circ \Hom{\cU})(S_R)$ by $\gamma(W\cap S^+)$.
This gives the following alternative for the master.
\\[.5\baselineskip]
\indent
Find in $G_{V_1\cup U}\cup H$, for each stable set $R$ in $Z$,\\
\indent
\indent
a stable set $S^+_R$ with $Z\cap S^+_R=R$, that maximizes $w((V_1\cup U)\cap S^+_R)+ \gamma(W\cap S^+_R)$.\\
\mbox{}
\hfill({\em Linearized master with root})
\\[.5\baselineskip]
The {\em linearized master/servant decomposition} consists of this ``linearized master with root''
together with the original servant with servant-graph $G_{V_2\cup\,\lazy{\cU}}$, root $\lazy\cU$ and
objective function $S_T\mapsto w(V_2\cap S_T)$.
\\[.5\baselineskip]
Note that as defined the linearized master may well be larger then $G$.
If $|W\sm U|\le \tau<|V_2|$, we speak of a
{\em $\tau$-linearized cutset decomposition} and we call
$(V_1,U,V_2)$ with $\cU$ a {\em $\tau$-linearizable cutset separation} and
$U$ a  {\em $\tau$-linearizable cutset}.
We will use $\tau$-linearized decomposition with $\tau\le 1$ and only when the
corresponding linearized master-graph is a proper induced subgraph of $G$.
Mind that the $\tau$-linearized master need not be an induced subgraph of $G$, not even for $\tau=1$.
The servant-graph, as always, is a proper induced subgraph of $G$.
\\[.5\baselineskip]
We use $\tau$-linearized decompositions for algorithms in the same way as decribed for
templates above. When running an algorithm using $\tau$-linearized decomposition,
we will generate a left/right ordered list of rooted graphs as before,
except that now we use linearized masters.
Consequently also the analysis of the running time is almost the same.
Almost! Lemma~\ref{graphcount} does not refer to $\tau$-linearized decompositions
with  $\tau\ge 1$ and we have to account for that.
We only use $0/1$-linearized decompositions.
By Lemma~\ref{quadratic}, their lists have only quadratic length.
\\[.5\baselineskip]
{\bf Templates with singleton-regions.}
Each algorithm in this paper either only use linearized decompositions or
only template decomposition, so without adding extra nodes.
Our template decompositions only come from separations where the grouping consists of singletons.
In such a setting, we denote a region $\cA$ just by its union $A=\cA$;
in line of that, we then denote templates as triples
$(G,Z,\Omega)$ where $\Omega$ consists of subsets of $V$.

If $\Omega$ is a collection of sets in $V$, then $G(\Omega)$ denotes the graph obtained by adding a record $\record{W}$ for each 
$W\in \Omega$. If $\cC$ is a class of triples $(G,Z,\Omega)$ where
$(G,Z)$ s a rooted graph and $\Omega$ a collection of subsets in $V$,
then $\record{\cC}$ denotes all rooted graphs $(G(\Omega),Z)$ with
$(G,Z,\Omega)\in \cC$.
\\[.5\baselineskip]
{\bf Outline.}
In Section~\ref{pfthm1}, we consider node cutsets that 
induce a 3-node path. A 3-node path has 5 stable sets, 
so we get records on 5 nodes then.
Five-node records are already quite big, 
but for the graphs considered in Section~\ref{pfthm1}
they work out fine, see Section~\ref{pfthm1}.
Besides these ``3-node path inducing'' cutsets,
all node cutsets we use are 1-linearizable.
We analyze those in Section~\ref{cliqueroot} and
our usage of node cutsets inducing a 3-node path in Section~\ref{pfthm1}.
\subsection{$\mathbf 0/1$-linearized decompositions}\label{cliqueroot}
Lemma~\ref{graphcount} implies that a decomposition along $0$-linearizable cutsets
$U$, will generate a quadratic of number of graphs.
To understand the structure of these cutsets, observe that, for $[G_U,\gamma,\sigma]$ to linearize a non-increasing function $d$ on $\lazy{\cU}$ forces the values $\sigma=\dd{\lazy{\emptyset}}$ and $\gamma_u=\dd{\lazy{\{u\}}}-\dd{\lazy{\emptyset}}$ for all $u\in U$.
So $[G_U,\gamma,\sigma]$ linearizes $d$
when $U$ is a clique, but not otherwise: if $a$ and $b$ are
nonadjacent nodes in $U$, then the function
$S\mapsto \dd S$ that takes value $1$ if $S=\lazynode\emptyset$ and $0$ otherwise,
has $-1=\dd{\lazy{\{a,b\}}}-\sigma =\gamma_a+\gamma_b=-2$; which is absurd.
If $U$ is a clique and $V_1$ and $V_2$ are both nonempty,
then $U$ is a {\em clique cutset}.
If $U$ is a clique and $V_1=\emptyset$ and $|\cU|<|U|$,
then $U$ contains pair of adjacent twins $u,v$
($u,v$ are {\em twins} if they have the same neighbors in $V\sm\{u,v\})$.
Actually if $u,v$ are adjacent twins, then
$(\emptyset,\{u,v\},V\sm \{u,v\})$ is a $0$-linearizable separation.
All-in-all, clique cutsets and adjacent twins are $0$-linearizable cutsets
and, conversely, each $0$-linearizable cutset is a clique cutset or has an
adjacent twin.
Since clique cutsets can be found in polynomial time (Whitesides~\cite{WHI1})
we can get the following result of Whitesides~\cite{WHI1} from
Lemma~\ref{graphcount}.
We skip the proof as it is a simpler version of what is written in
the proofs of Theorems~\ref{semiamalgamrecursion} and~\ref{amalgamrecursion}.
\begin{corollary}[Whitesides~\cite{WHI1}]\label{cliquerecursion}
Let $\cG$ be a class of graphs closed under clique cutset decomposition.
If $\cP\subseteq \cG$ contains all members of $\cG$
without clique cutsets, 
then the stable set problem on graphs in $\cG$
is solvable in polynomial time
if and only if
the stable set problem on graphs in $\cP$
is solvable in polynomial time.
\end{corollary}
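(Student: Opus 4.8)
One implication is immediate, since $\cP\subseteq\cG$: any polynomial-time algorithm for the stable set problem on $\cG$ restricts at once to one on $\cP$. For the converse, the plan is to turn a polynomial-time algorithm for $\cP$ into one for $\cG$ by running the $0$-linearized master/servant decomposition along clique cutsets, organised exactly as in the proof of Theorem~\ref{th:templ-stablesets}. Given $G\in\cG$ with weighting $w$, I would start a right/left ordered list with the single rooted graph $(G,\emptyset)$. As observed just before the corollary, a clique $U$ is a $0$-linearizable cutset: the weight modification $\gamma_u=\Service_{\lazy\cU}(\lazy{\{u\}})-\Service_{\lazy\cU}(\lazy{\emptyset})$ together with the constant $\Service_{\lazy\cU}(\lazy{\emptyset})$ linearizes $\Service_{\lazy\cU}\circ\Hom{\cU}$ without adding any node ($W=U$). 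So a decomposition along a clique cutset $(V_1,U,V_2)$ with singleton grouping $\cU$ replaces $(G',Z)$ by the master $(G'_{V_1\cup U},Z)$, with $w$ adjusted on $U$ by $\gamma$, and appends the servant $(G'_{V_2\cup U},U)$, weighted $0$ on its root.

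Three structural facts would be recorded at the outset. First, both $G'_{V_1\cup U}$ and $G'_{V_2\cup U}$ are blocks of the clique-cutset decomposition of $G'$, so closure of $\cG$ under that operation keeps every graph on the list inside $\cG$. Second, every root that occurs is a clique: the initial $\emptyset$, and every servant-root $U$ (a clique cutset), a property preserved when the master passes to an induced subgraph. Third, since no node is ever added, the underlying rooted graphs genuinely form a master/servant decomposition list, so Lemma~\ref{graphcount} bounds the list to $O(n^2)$ graphs, each on at most $n$ nodes. For the separations themselves I would invoke Whitesides' polynomial clique-cutset algorithm \cite{WHI1}, noting that for a clique root $Z$ a \emph{fitting} separation (one with $Z\subseteq V_1\cup U$) exists exactly when $G'$ has a clique cutset at all: given any clique cutset $U$, the set $Z\sm U$ is a clique and hence meets at most one component of $G'-U$, so the components split into a nonempty $V_1$ containing it and a nonempty $V_2$.

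The algorithm then iterates as in Theorem~\ref{th:templ-stablesets}: repeatedly take the rightmost problem; if it admits a fitting clique cutset, decompose as above and place master and servant at the end; otherwise it has no clique cutset at all, so by hypothesis its graph lies in $\cP$, where I solve it and store the solution to be read off later by its master through $\gamma$ and the constant. Each iteration is polynomial, there are $O(n^2)$ of them by Lemma~\ref{graphcount}, and Fact~\ref{formula} reassembles an optimal stable set of $G$; this yields the converse.

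The one delicate point, and where I would be most careful, is solving the rightmost base-case problems: these are rooted problems on a clique root $Z$, whereas the hypothesis supplies only an unrooted algorithm for $\cP$. The resolution is that a clique root has only $|Z|+1$ stable sets, each of size at most one, so the rooted problem on $(G',Z)$ with $G'\in\cP$ splits into $|Z|+1$ ordinary weighted stable-set problems on the very same graph $G'$: giving a chosen $z\in Z$ a sufficiently large weight forces $S\cap Z=\{z\}$ (the other clique vertices then being excluded automatically), while making all of $Z$ sufficiently negative forces $S\cap Z=\emptyset$. Thus every call to the $\cP$-algorithm is on a genuine member of $\cG$ with no vertex deleted, which is precisely what lets closure under clique-cutset decomposition, rather than under arbitrary induced subgraphs, suffice.
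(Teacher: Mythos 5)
Your proposal is correct and follows essentially the route the paper intends: the paper skips this proof as ``a simpler version'' of Theorems~\ref{semiamalgamrecursion} and~\ref{amalgamrecursion}, i.e.\ precisely your $0$-linearized master/servant decomposition along clique cutsets with clique roots, Whitesides' cutset-finding, and the quadratic list bound from Lemma~\ref{graphcount}. Your final paragraph even spells out a detail the paper leaves implicit in Theorem~\ref{amalgamrecursion} (reducing the rooted problem on a clique root $Z$ to $|Z|+1$ unrooted calls on the same graph via weight-forcing, so that closure under clique cutset decomposition alone suffices), which is a welcome addition rather than a deviation.
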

\noindent
So $0$-linearized decompositions of rooted graphs are well-understood:
the node cutsets are cliques,
they can be found in polynomial time,
and the $0$-linearized decomposition-lists have only quadratically
many members
and use only proper induced subgraphs.
The same is true for $1$-linearized decompositions, 
except that it is
not the node cutset but only the servant-root that is guaranteed to be a clique.
\begin{lemma}
\label{1linearizedset}
Let $H$ be a graph on $U\cup\{r\}$ with $r\not\in U$ and let $\cU$ be a
a partition of $U$ in $H$.
Let $P$ be the union of the two element stable sets in $U$.
Then $H$ linearizes $H_U$ with $\cU$ if and only if
$P$ is fully adjacent to $r$ and lies in a set
$A\in \cU\cup\{\emptyset\}$.
\end{lemma}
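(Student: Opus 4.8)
The plan is to reduce the statement to a direct analysis of the quantity $M(T):=\max\{\gamma(S):S\in\stable{H},\ S\cap U=T\}$ over $T\in\stable{H_U}$. Since $H$ has only the single extra node $r$, every such $S$ is either $T$ or $T\cup\{r\}$, the latter being stable exactly when $T$ avoids $\neighbor{H}{r}$; hence $M(T)=\gamma(T)+\gamma_r^+\cdot\mathbf{1}[T\cap \neighbor{H}{r}=\emptyset]$, where $\gamma_r^+:=\max\{0,\gamma_r\}$. By the definitions, $H$ linearizes $H_U$ with $\cU$ iff for every nonnegative inclusion-wise non-increasing $d$ on $\stable{\lazy{\cU}}$ there exist $\gamma,\sigma$ with $M(T)=(d\circ\Hom{\cU})(T)-\sigma$ for all $T\in\stable{H_U}$. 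I would first record the shape of $\stable{H_U}$: a node lies outside $P$ iff it is adjacent to all of $U$, so $U\sm P$ is a clique whose nodes are universal in $U$; therefore the stable sets of $U$ are exactly $\emptyset$, the singletons $\{w\}$ with $w\in U\sm P$, and the nonempty stable subsets of $P$. Writing $N:=\neighbor{H}{r}$, the two conditions to be proved read $P\subseteq N$ and $P\subseteq A$ for some $A\in\cU\cup\{\emptyset\}$.

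For sufficiency, assume $P\subseteq N$ and $P\subseteq A$. The point is that both hypotheses ``flatten'' $d\circ\Hom{\cU}$ on $P$: for nonempty stable $T\subseteq P$ we have $\Hom{\cU}(T)=\{\lazynode{A}\}$, so $(d\circ\Hom{\cU})(T)=d(\{\lazynode{A}\})$ is constant, while $P\subseteq N$ makes the $r$-bonus vanish on all such $T$ but survive at $\emptyset$. I would then exhibit the linearization explicitly: set $\gamma_u=0$ for $u\in P$, put $\gamma_r:=d(\emptyset)-d(\{\lazynode{A}\})$ (which is $\ge 0$ by monotonicity) and $\sigma:=d(\{\lazynode{A}\})$, and choose each $\gamma_w$ with $w\in U\sm P$ so that the singleton equation $M(\{w\})=d(\{\lazynode{X_w}\})-\sigma$ holds, where $X_w\in\cU$ is the group of $w$; this is always solvable since $\{w\}$ is the only stable set meeting $w$. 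A short check on the three families of stable sets then verifies $M(T)=(d\circ\Hom{\cU})(T)-\sigma$ throughout. The degenerate case $P=\emptyset$ is the clique case, handled with $\gamma_r=0$ and $\sigma=d(\emptyset)$.

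For necessity I would derive each condition by feeding a single bad $d$. To force $P\subseteq N$, take $d$ the indicator of $\emptyset$; if some $u\in P\sm N$ existed, the equations for $\emptyset$ and $\{u\}$ would pin down $\gamma_u=-1$, whereas the two-element stable set $\{u,v\}$ witnessing $u\in P$ (in either case $v\in N$ or $v\notin N$) makes the system for $\emptyset,\{u\},\{v\},\{u,v\}$ collapse to the absurd identity $-1=0$. To force $P\subseteq A$, suppose $P$ meets two distinct groups. I would split into two mechanisms. If there is a cross-group two-element stable set $\{a,a''\}$ with $a,a''\in P\subseteq N$ lying in distinct (hence nonadjacent) groups $X,Z$, then feeding the indicator of ``$\{\lazynode{X},\lazynode{Z}\}\not\subseteq\,\cdot\,$'' forces $\gamma_r^+=d(\emptyset)-\sigma<0$, which is impossible. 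Otherwise every witness of membership in $P$ stays inside its own group, so at least two groups, say $X$ and $Y$, are internally non-cliques; each such group $Z$ forces $d(\{\lazynode{Z}\})=\sigma$ (compare a singleton inside $Z$ with an internal non-edge of $Z$, both meeting only $Z$), and choosing $d$ with $d(\{\lazynode{X}\})\neq d(\{\lazynode{Y}\})$ yields the contradiction.

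The main obstacle is exactly the necessity of the ``single group'' condition when the two groups that $P$ meets happen to be fully adjacent to each other: there is then no cross-group stable pair, so the first mechanism is unavailable and one must instead exploit the internal non-edges of the groups. Securing the clean dichotomy---every node of $P$ is certified either by a cross-group non-neighbor (mechanism one) or by an in-group non-neighbor (mechanism two)---is what makes the argument go through. The remaining verifications are routine linear bookkeeping in the values $\gamma_u,\gamma_r,\sigma$, relying throughout on the fact that $d$ depends on $T$ only through $\Hom{\cU}(T)$ and on the sign constraint $\gamma_r^+\ge 0$.
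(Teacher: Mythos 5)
Your proposal is correct and takes essentially the same route as the paper: the identical reduction of the linearization property to the equations $\gamma(T)+\max\{0,\gamma_r\}\cdot\mathbf{1}[T\cap \neighbor{H}{r}=\emptyset]=(d\circ\Hom{\cU})(T)-\sigma$, the identical explicit choice of $(\gamma,\sigma)$ for sufficiency, and for necessity the same device of pinning down $\sigma$ and the $\gamma_u$ via the equations at $\emptyset$ and singletons and then feeding specially crafted nonnegative non-increasing test functions into the equations at nonadjacent pairs. The only difference is organizational: the paper uses one test function (values $3,2,0$ as in its equations (\ref{force2}) and (\ref{force3})) plus a uniformity-over-all-$d$ argument, where you use three $0/1$ indicators with an explicit cross-group/in-group dichotomy --- the linear constraints being exploited are the same, and your dichotomy is exhaustive since any two-element stable set lies in $P$ and, by the grouping property, a cross-group stable pair forces its two groups nonadjacent.
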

\begin{proof}
First assume that $H$ linearizes $H_U$ with $\cU$.
Recall,
that $[H,\gamma,\sigma]$ linearizing a nonnegative non-increasing function $d$ on
$\stable{\lazy{\cU}}$ means that
\begin{eqnarray}
\label{1decadj}
\gamma(S)&= &\dd{\lazynode{S}}-\sigma\ \mbox{ if $S\in \stable{U}$ is adjacent to $r$},\\
\max\{0,\gamma_r\}+
\label{1decnonadj}
\gamma(S)&= &\dd{\lazynode{S}}-\sigma\ \mbox{ if $S\in \stable{U}$ is not adjacent to $r$}.
\end{eqnarray}
Applying this to stable sets with at most one element, this forces the values:
\begin{equation}\label{force}
\sigma= \dd{\lazynode{\emptyset}} - \max\{0,\gamma_r\}
\mbox{ and } \gamma_u=
\begin{cases}
\dd{\lazynode{\{u\}}}-\sigma&\mbox{if $u\in U$ is adjacent to $r$},\\
\dd{\lazynode{\{u\}}}-\dd{\lazynode{\emptyset}}&\mbox{if $u\in U$ is not adjacent to $r$}.\\
\end{cases}
\end{equation}
Substituting these back in (\ref{1decadj}) and (\ref{1decnonadj}) gives
for any nonadjacent pair $u, v$ in $U$ the identity:
\begin{equation}\label{force2}
\dd{\lazynode{\{u,v\}}}+
\dd{{\lazynode{\emptyset}}}
-\dd{\lazynode{\{u\}}}-\dd{\lazynode{\{v\}}}
=
\begin{cases}
\max\{0,\gamma_r\}
&\mbox{if $u$ and $v$ are adjacent to $r$},\\
0
&\mbox{if $u$ or $v$ is not adjacent to $r$}.
\end{cases}
\end{equation}%
Consider the nonnegative and non-increasing function $\bar d$ on
$\stable{\lazynode{\cU}}$ given by
$\bardd{\lazynode{\emptyset}}=3, \bardd{\lazynode{\{s\}}}=2$ for all $s\in P$,
and $\bardd{\lazynode{S}}=0$ for all other stable sets in $U$.
Consider any nonadjacent pair $u,v$ in $U$.
The definition of $\bar d$ implies that
$$\bardd{\lazynode{\{u,v\}}}\in\{0,2\}
\mbox{ and }
\bardd{\lazynode{\{u,v\}}}+\bardd{{\lazynode{\emptyset}}}
-\bardd{\lazynode{\{u\}}}-\bardd{\lazynode{\{v\}}}=
\bardd{\lazynode{\{u,v\}}}-1.$$
So, we see from (\ref{force2}) that
$u$ and $v$ are both adjacent to $r$,
that $\bardd{\lazynode{\{u,v\}}}=2$, and that
$\lazynode{\{u,v\}}=\lazynode{\{s\}}$ for some $s\in P$.
However, $\lazynode{\{u,v\}}=\lazynode{\{s\}}$ is
 only possible when
$\lazynode{\{u\}}=\lazynode{\{s\}}=\lazynode{\{v\}}$.
That means that the expression
$\dd{\lazynode{\{u,v\}}}+d_{{\lazynode{\emptyset}}}- \dd{\lazynode{\{u\}}}- \dd{\lazynode{\{v\}}})$
is identical to
$\dd{{\lazynode{\emptyset}}} -\dd{\lazynode{\{u\}}}$---for all $d$, not just for $\bar d$.
Hence, for every nonadjacent $u,v\in U$, condition (\ref{force2}) reads:
\begin{equation}\label{force3}
\max\{0,\gamma_r\}=\dd{\lazynode{\emptyset}}-\dd{\lazynode{\{u\}}}.
\end{equation}%
As this does not depend on $v$, this condition on $\gamma_r$
applies to each $u\in P$.
So the function $u\mapsto \dd{\lazynode{\{u\}}}$ is constant on $P$.
This must hold for each nonnegative non-increasing function $d$
on $\stable{\lazy{\cU}}$. This can only be the case if
$\lazynode{\{u\}}$ is the same set for all $u\in P$. Denote that set by $A$;
it is as claimed in the lemma.
\\[.5\baselineskip]
Now assume $P$ is fully adjacent to $r$ and that
there is a set $A\in \cU\cup\{\emptyset\}$ containing $P$.
Let $d$ be a nonnegative and non-increasing function on $\lazynode{\cU}$.
Define $\sigma$ and $\gamma_u$ with $u\in U$ such that:
$$ \sigma=\dd{\lazynode{A}},\ \ 
\gamma_u=\begin{cases}
	\dd{\lazynode{\emptyset}}- \dd{\lazynode{A}} &\text{if}\ u=r,\\
 	\dd{\lazynode{\{u\}}}-\dd{\lazynode{A}}      &\text{if $u\in U$ is adjacent to}\ $r$,\\
 	\dd{\lazynode{\{u\}}}-\dd{\lazynode{\emptyset}} &\text{if $u\in U$ is not adjacent to}\ $r$.\\
    \end{cases}
$$
Note that $\lazynode{\{u\}}=\lazynode{A}$ if $u\in A$.
So $\gamma_u=0$ if $u\in P$.
Since $\dd{\lazynode{\emptyset}}\ge  \dd{\lazynode{A}}$, it is straightforward
to see that $[H,\gamma,\sigma]$ linearizes $d$.
\end{proof}
\begin{lemma}
\label{1sepdec}
Let $(V_1,U,V_2)$ be a node cutset separation with grouping $\cU$
of rooted graph $(G,Z)$.

If $(V_1,U,V_2)$ is a $1$-linearizable cutset separation with grouping $\cU$,
then there exist three disjoint (possibly empty) sets $A_1,K,A_2$ in $V$ with the following properties:
\begin{enumerate}[(a)]
\itemsep=0cm
\item\label{1sepdec1}
$K$ is a clique, $A_1=U\sm K$ and $A_2\subseteq V_2$.
\item\label{1sepdec2}
$A_1$ is fully adjacent to $K\cup A_2$ and not adjacent to $V_2\sm A_2$.
\end{enumerate}
Conversely, if $A_1,K,A_2$ are disjoint sets in $V$
satisfying (\ref{1sepdec1}) and (\ref{1sepdec2}),
then the following hold.
\begin{enumerate}[$\bullet$]
\item For each $r\in A_2$, the pair 
$(G_{V_1\cup U\cup \{r\}}, Z), (G_{V_2\cup\, \lazy{\cU}}, \lazy{\cU})$ is a
$1$-linearized node cutset decomposition such that 
$G_{V_1\cup U\cup \{r\}}$ and $G_{V_2\cup\,\lazy{\cU}}$ are
proper induced subgraphs of $\cG$ and the servant-root $\lazy{\cU}$
is a clique.
\item If $A_2=\emptyset$, then
 $(G_{V_1\cup U}, Z), (G_{V_2\cup K}, K)$ is a
$0$-linearized node cutset decomposition such that
$G_{V_1\cup U}$ and $G_{V_2\cup K}$ are proper induced subgraphs of $G$ and
the servant-root $K$ is a clique.
\end{enumerate}
\end{lemma}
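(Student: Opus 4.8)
The plan is to read both implications through Lemma~\ref{1linearizedset}, translating its abstract linearization criterion into the concrete adjacency data $(A_1,K,A_2)$ and back. Write $P$ for the union of the two-element stable sets of $U$, so that $U\sm P$ is exactly the set of nodes of $U$ with no non-neighbour in $U$. Two elementary facts drive everything: first, $U\sm P$ is a clique that is fully adjacent to $P$, since each of its nodes is adjacent to every other node of $U$; second, if a single member $A$ of $\cU$ contains $P$, then, because a group is fully adjacent to its neighbours in $V_2$, every node of $P$ has the same $V_2$-neighbourhood $\neighbor{G}{A}\cap V_2$.

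For the forward direction I would unpack ``$1$-linearizable'': there is a graph $H=(W,F)$ with $H_U=G_U$ and $|W\sm U|\le 1$ that linearizes $G_U$ with $\cU$. If $W=U$, the clique discussion preceding the lemma forces $U$ to be a clique, and I take $A_1=\emptyset$, $K=U$, $A_2=\emptyset$. If $W=U\cup\{r\}$, Lemma~\ref{1linearizedset} gives that $P$ is fully adjacent to $r$ and lies in a single $A\in\cU\cup\{\emptyset\}$; then I set $A_1=P$, $K=U\sm P$, and $A_2=\neighbor{G}{A_1}\cap V_2$. Property~(\ref{1sepdec1}) is immediate from the first driving fact, and (\ref{1sepdec2}) splits into three checks: full adjacency of $A_1$ to $K$ is again the first fact; full adjacency to $A_2$ follows from the second fact together with $A_2\subseteq\neighbor{G}{A}\cap V_2$; non-adjacency to $V_2\sm A_2$ is the definition of $A_2$.

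For the converse I would start from $(A_1,K,A_2)$ obeying (\ref{1sepdec1})--(\ref{1sepdec2}) and first note $P\subseteq A_1$: since $K$ is a clique fully adjacent to $A_1=U\sm K$, every node of $K$ is adjacent to all of $U$ and so lies outside $P$. For the first bullet, fix $r\in A_2$ and verify through Lemma~\ref{1linearizedset} (applied to $H=G_{U\cup\{r\}}$) that $r$ is fully adjacent to $P$---immediate from $P\subseteq A_1$, $r\in A_2$, and the full adjacency of $A_1$ to $A_2$---and that $P$ lies in a single group; this makes $(G_{V_1\cup U\cup\{r\}},Z),(G_{V_2\cup\,\lazy\cU},\lazy\cU)$ a $1$-linearized decomposition. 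Properness of $G_{V_1\cup U\cup\{r\}}$ uses $|V_2|>1$ (built into $\tau$-linearizability with $\tau=1$), so that $V_2\sm\{r\}\ne\emptyset$; properness of the servant-graph uses $|V_1\cup U|>|\cU|$; and the servant-root $\lazy\cU$ is a clique exactly because $P$ sits in one group, as two non-adjacent groups would contribute a cross pair to $P$ and split it. For the second bullet, when $A_2=\emptyset$ condition~(\ref{1sepdec2}) says $A_1$ has no neighbour in $V_2$, so every neighbour of $U$ in $V_2$ lies in the clique $K$; then $K$ is a clique cutset and $(G_{V_1\cup U},Z),(G_{V_2\cup K},K)$ is the corresponding $0$-linearized decomposition, with clique servant-root $K$ and both sides proper induced subgraphs.

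The step I expect to be the real obstacle is the clause ``$P$ lies in a single group of $\cU$'' in the converse: it is the one place where the combinatorial datum $(A_1,K,A_2)$ must be reconciled with the grouping. The enabling structure is that $A_1$ has the uniform $V_2$-neighbourhood $A_2$ and is fully adjacent to the clique $K$, so $A_1$ is itself a group of $G_{V_2\cup U}$; the clean route is to arrange $\cU$ so that $A_1$ is one class and the nodes of $K$ are singletons, check that this is a legitimate grouping (each class fully adjacent to its $V_2$-neighbours, and any two classes fully adjacent because $K$ is a clique fully adjacent to $A_1$), and observe that this simultaneously yields ``$P$ in one group'' and a clique servant-root. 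Keeping this grouping consistent with the statement's $\lazy\cU$, and ensuring the extra node $r$ never spoils properness, is where the care lies; the algebraic substitutions themselves are exactly those already discharged inside Lemma~\ref{1linearizedset}.
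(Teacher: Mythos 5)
Your proposal is correct and follows essentially the same route as the paper: both directions are read through Lemma~\ref{1linearizedset}, taking $A_2=\neighbor{G}{A_1}\cap V_2$ in the forward direction and using $H=G_{U\cup\{r\}}$ (resp.\ the clique cutset $K$ when $A_2=\emptyset$) in the converse. Your only deviations are cosmetic or gap-filling: you choose $A_1=P$ where the paper chooses the member of $\cU\cup\{\emptyset\}$ containing $P$ (both satisfy (a)--(b)), and you spell out the grouping $\{A_1\}\cup\{\{u\}:u\in K\}$, the clique servant-root, and the properness checks that the paper's proof compresses into ``the rest is straightforward.''
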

\begin{proof}
If $(V_1,U,V_2)$ is a $1$-linearizable cutset separation with $\cU$, it
follows from Lemma~\ref{1linearizedset}, that there
exists $A_1\in \cU\cup\{\emptyset\}$, so that
$K=U\sm A_1$ is a clique that is fully adjacent to $A_1$.
Let $A_2=\neighbor{G}{A_1}\cap V_2$.
Since $A_1$ is a group in $G_{V_2\cup U}$, the sets $A_1,K,A_2$ satisfy (a) and (b).

Now, suppose $A_1,K,A_2$ satisfy (a) and (b). If $r\in A_2$, then by Lemma~\ref{1linearizedset}
$H=G_{U\cup \{r\}}$ linearizes $G_U$ with $\cU$. If $A_2=\emptyset$, then $K$ is a clique cutset.
The rest is straightforward.
\end{proof}
\noindent
A graph on $n$ nodes with a clique of size $n-2$ is a {\em near-clique}.
We call a decomposition of a rooted graph $(G,Z)$ {\em proper} if $G$ is not a near-clique.
A decomposition list is {\em proper} if it is obtained by proper decompositions only.
For $G=(V,E)$ and $Z\subseteq V$, we define $\load(G,Z)=|V\sm Z|-1$.
\begin{lemma}\label{quadratic}
A proper $1$-linearized decomposition-list of a rooted graph with $n$ nodes
and a clique as root has most $n^2$ members.
\end{lemma}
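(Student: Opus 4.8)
The plan is to bound the number of decompositions performed in producing the list: since each decomposition replaces one member by two, a list obtained by $k$ decompositions has exactly $k+1$ members, so it suffices to show that fewer than $n^2$ decompositions occur. I would first record the structural invariants supplied by Lemma~\ref{1sepdec}: a decomposition of $(G,Z)$ yields a master $(G_{V_1\cup U\cup\{r\}},Z)$ with $r\in A_2$ (or no added node when $A_2=\emptyset$) and a servant with clique root, both \emph{proper} induced subgraphs of $G$. Since the initial root is a clique and every servant-root is again a clique while the master retains the root $Z$, every member on the list has a clique root, so $\load$ is always taken relative to a clique root. A direct count then gives that $\load$ is additive across a decomposition: when the node $r$ is added one has $\load(G_{V_1\cup U\cup\{r\}},Z)+\load(G_{V_2\cup\,\lazy{\cU}},\lazy{\cU})=\load(G,Z)$, and when no node is added one gets $\load(G,Z)-1$ instead. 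Finally, because the root is a clique and the decomposition is proper (so $G$ is not a near-clique), a \emph{decomposable} member has $|V\sm Z|\ge 3$, i.e.\ $\load(G,Z)\ge 2$.

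Next I would classify each decomposition by its effect on the area $V\sm Z$. The master- and servant-areas cover the parent area and overlap in exactly the single node $r$ (when $r$ is added) or not at all; moreover the servant-area is $V_2$, with $|V_2|\ge 2$ whenever a node is added. Call a decomposition \emph{area-preserving} if one child---necessarily the servant---inherits the whole area $V\sm Z$; this occurs precisely for the node-free decompositions with trivial master and for the node-adding ones whose master-area is just $\{r\}$. In each such step the surviving servant has the same area as its parent but strictly fewer nodes (here properness is used, via $V_1\neq\emptyset$ or $|\cU|<|U|$), hence a strictly smaller clique root. Therefore, along any maximal chain of area-preserving steps, all of which share one fixed area, the root shrinks by at least one node each time, so such a chain has length at most $n$.

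It then remains to bound the number of \emph{area-reducing} decompositions, those in which both children have strictly smaller area; contracting the area-preserving chains turns the process into a binary tree whose internal nodes are exactly these, and the number of chains is at most $2$ times the number of area-reducing steps plus one, so the area-preserving steps total at most $n$ times $O(\text{area-reducing steps})$. The main obstacle is that a node-adding decomposition \emph{conserves} total $\load$, so area-reducing steps cannot simply be charged against a decreasing quantity, exactly as the node $r$ defeats the monovariants of Lemma~\ref{graphcount}. The resolution is that in an area-reducing step that adds a node, both children have area at least $2$ and hence $\load\ge 1$, so the parent's $\load$ is genuinely \emph{partitioned} into two positive parts; the node-free area-reducing steps strictly drop the total $\load$. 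Tracking the multiset of loads over the current members, whose sum starts at $\load(G,Z)=|V\sm Z|-1\le n-1$ and never increases, shows that the node-free steps are bounded by the initial load while the load-conserving ones branch in a tree whose leaves each carry a positive unit of load; this makes the area-reducing steps $O(n)$. Combining $O(n)$ area-reducing steps with at most $n$ area-preserving steps per chain yields $O(n^2)$ decompositions, and a tight accounting of the clique-root and properness bounds at each step sharpens the total to at most $n^2$ members. The crux of the whole argument is this control of the load-conserving area-reducing steps, the only place where the duplicated node $r$ obstructs a naive count.
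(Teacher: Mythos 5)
Your argument is correct in substance and is, at its core, the same as the paper's: your area-preserving/area-reducing dichotomy coincides (up to one vacuous case, see below) with the paper's split into \emph{special} decompositions, where the master is trivial, and \emph{normal} ones; your load-additivity claim is the paper's identity (\ref{sumup}); and your key observation that a node-adding area-reducing step partitions a positive load into two positive loads is exactly how the paper proves (\ref{strict}), bounding the normal (= your area-reducing) steps by $n$ via the monovariants $\Load{\cG}$ and $\pos{\cG}$. Two caveats on the details. The node-adding area-preserving case you allow for cannot occur: area-preservation means $Z=V_1\cup U$, and since every root on the list is a clique, $U\subseteq Z$ is then a clique, so by the definition of $1$-linearized decompositions no node $r$ is added. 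Relatedly, the root shrinkage in the surviving (node-free) case is due not to properness but to the condition $|V_1\cup U|>|\cU|$ in the definition of a node cutset separation; properness is used on the other side, to guarantee $\load{(G,Z)}\ge 2$ for every decomposable member.

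More significantly, your final step is asserted rather than carried out. Charging at most $n$ area-preserving steps to each maximal chain, with at most $2\cdot(\#\text{area-reducing steps})+1$ chains, gives roughly $2n^2$ decompositions; this proves the lemma with a weaker constant---harmless for every application in the paper, but short of the stated bound $n^2$, and the promised ``tight accounting'' is precisely the one non-obvious piece of bookkeeping. The paper replaces your per-chain count by a single global potential, the total root size $\rootcount{\cG}$ over positive-load members: each normal decomposition increases it by at most $n-1$ (only the servant root $\lazy{\cU}$ is new, the master inherits $Z$), while each special decomposition strictly decreases it, since its servant root satisfies $|\lazy{\cU}|=|\cU|<|V_1\cup U|=|Z|$. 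With at most $n$ normal steps this caps the special steps at $n(n-1)$, whence $|\cG|\le n+n(n-1)=n^2$. Adopting this global potential in place of your chain decomposition would close the gap; everything else in your proposal matches the paper's proof.
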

\begin{proof}
Consider a proper $1$-linearized decomposition-list $\cG$.
Let $\cG_{> 0}$ consist of the members of $\cG$ with positive load.
We analyze the impact of a single proper $1$-linearized decomposition in $\cG$ on
the following parameters:
\begin{itemize}
\item
The total 
$\Load{\cG}=\sum_{(G',Z')\in \cG_{> 0}}\load(G',Z')$
of the positive loads in $\cG$.
\item
The number $\pos{\cG}$ of members of $\cG$ with positive load.
\item
The total 
$\rootcount{\cG}=\sum_{(G',Z')\in \cG_{> 0}}|Z'|$
of the members of $\cG$ with positive load.
\end{itemize}
Clearly, these numbers satisfy:
\begin{eqnarray}       \label{lowhigh}
\pos{\cG}\le \Load{\cG}.
\end{eqnarray}
Consider a proper
$1$-linearized decomposition $\rootindmaster$, $\rootservant$ of 
a member $(G,Z)\in \cG$ coming from a separation $(V_1,U,V_2)$
of $(G,Z)$ with a grouping $\cU$.
Then $Z$ is a clique and $G$ is not a near-clique.
Recall from the definition of $\tau$-linearized decompositions that
$G_1=G_{V_1\cup U\cup \{r\}}$ for some node $r\in V_2$, if
$U$ is not a clique, and that $G_1=G_{V_1\cup U}$, otherwise.
This gives the following identity:
\begin{eqnarray}       \label{sumup}
\load{\rootindmaster} +\load{\rootservant}=
\begin{cases}
           \load{(G,Z)}-1&\text{ if $U$ is a clique,}\\
           \load{(G,Z)}  &\text{ if $U$ is not a clique}.
\end{cases}
\end{eqnarray}
We distinguish between {\em special} decompositions, when $\load{\rootindmaster}=-1$,
and {\em normal} decompositions, when $\load{\rootindmaster}\ge 0$.
First we analyze special decompositions.
For those $G_1$ has all nodes in $Z$, so $U$ is a clique.
Hence (\ref{sumup}) gives: $\load{\rootservant}=\load{(G,Z)}$, which is positive.
Since $|V_1\cup U|>|\lazy{\cU}|$ and $Z=V_1\cup U$, the
root of $\rootservant$ is smaller than the 
root of $(G,Z)$.
Hence a special decomposition in $\cG$ gives a list $\cH$ with:
\begin{equation}\label{special}
\Load{\cH}=\Load{\cG},\ \ \  \
\pos{\cH}=\pos{\cG},\ \ \  \
\rootcount{\cH}<\rootcount{\cG}.
\end{equation}
Next we analyze normal decompositions in $\cG$, so when $\load{\rootindmaster} \ge 0$.
Since $Z$ is a clique and $G$ is not a near-clique, we have that $\load(G,Z)\ge 2$.
So, by (\ref{sumup}),
at least one of $\rootindmaster$ and $\rootservant$ has positive load.
So a normal decomposition in $\cG$ gives a list $\cH$ with:
\begin{equation}\label{normal}
\Load{\cH}\le \Load{\cG},\ \ \  \
\pos{\cH}\ge \pos{\cG},\ \ \  \
\rootcount{\cH}\le \rootcount{\cG}+n-1.
\end{equation}
Moreover:
\begin{equation}\label{strict}
\Load{\cH}<\Load{\cG}\mbox{ or }
\pos{\cH}>\pos{\cG}.
\end{equation}
Indeed, if $\Load{\cH}=\Load{\cG}$, then
$\load{\rootindmaster} +\load{\rootservant}= \load{(G,Z)}$,
so by (\ref{sumup}), $U$ is not a clique.
That means that $\load\rootservant=|V_2|-1\ge 1$
and that $U$ contains a node that is not in $Z$.
Since the extra node in $G_1$ is not in $Z$ and not in $U$,
we see that also $\load{\rootindmaster}\ge 1$.
Hence, $\pos{\cH}>\pos{\cG}$.
This proves (\ref{strict}).
\\[.5\baselineskip]
Now (\ref{lowhigh})-(\ref{strict}) tell that
when decomposing a rooted graph on $n$ nodes by proper $1$-linearized decompositions
we will make at most $n$
normal decompositions, and that we will, over time, create no more
than $n(n-1)$ root nodes.
So we do at most $n(n-1)$ special decompositions.
So $|\cG|\le n+n(n-1)=n^2$, as claimed.
\end{proof}

\begin{theorem}\label{semiamalgamrecursion}
Let $\cR$ and $\cP$ be classes of rooted graphs
such that there exists a polynomial-time algorithm that for each input
from $\cR\sm \cP$ finds a proper $1$-linearized decomposition
into $\cR$ and such that the stable set problem on rooted graphs in $\cP$
is solvable in polynomial time.  Then the stable set problem on rooted graphs in $\cR$
is solvable in polynomial time.
\end{theorem}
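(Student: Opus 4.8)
The plan is to reuse, almost verbatim, the list-based algorithm from the proof of Theorem~\ref{th:templ-stablesets}, replacing template decompositions by $0/1$-linearized decompositions; this way no nonlinear terms have to be carried around and the sharper count of Lemma~\ref{quadratic} becomes available. Concretely, I would maintain a right/left ordered list of rooted stable set problems whose underlying rooted graphs form a proper $1$-linearized decomposition-list of the input $(G,Z)\in\cR$, and repeatedly process the \emph{right most} problem: if it lies in $\cP$, solve it with the given algorithm and store an explicit listing of its solution; otherwise it lies in $\cR\sm\cP$, so I run the given decomposition algorithm to obtain a proper $1$-linearized decomposition into $\cR$, leave the linearized master in place, and insert the servant to its right. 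Since the servant-root of a $1$-linearized decomposition is always a clique and the master keeps the root it had (Lemma~\ref{1sepdec}), all roots stay cliques provided the input root is one---which we may assume, as it is what makes $\stable{Z}$ polynomially enumerable in the first place.

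The only genuine departure from Theorem~\ref{th:templ-stablesets} is how a servant feeds its master, and this is where I would take the most care. When decomposing $(G,Z)$ along a $1$-linearizable separation $(V_1,U,V_2)$ with $\cU$, Lemma~\ref{1sepdec} lets me pick $r\in A_2$ so that $H=G_{U\cup\{r\}}$ linearizes $G_U$ with $\cU$, both $G_{V_1\cup U\cup\{r\}}$ and $G_{V_2\cup\,\lazy\cU}$ are proper induced subgraphs of $G$, and the servant-root $\lazy\cU$ is a clique (if $A_2=\emptyset$ the separation is already $0$-linearizable and no node is added). At decomposition time the master's weights are not yet known, since they depend on the servant's answer $\Service_{\,\lazy\cU}$; but because the servant sits to the right of the master it is solved first, so by the time the master is processed those weights are available. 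As $\Service_{\,\lazy\cU}$ is nonnegative and inclusion-wise non-increasing (Fact~\ref{servantrecord}), Lemma~\ref{1linearizedset} supplies explicit $\gamma,\sigma$ with $[H,\gamma,\sigma]$ linearizing $\Service_{\,\lazy\cU}\circ\Hom{\cU}$, and I plug these into the linearized master's objective. Correctness of one such step is then exactly the linearization Fact combined with the rooted master/servant identity~(\ref{rootmaxformula}); iterating it down the list shows that the stored solution of the first problem solves the original one.

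For termination and running time I would invoke Lemma~\ref{quadratic}: the list is a proper $1$-linearized decomposition-list of a clique-rooted graph on $n$ nodes, hence has at most $n^2$ members. As every created problem is processed exactly once, the procedure performs $O(n^2)$ iterations, each polynomial by the assumed decomposition algorithm for $\cR\sm\cP$ and optimization algorithm for $\cP$, together with the explicit linearization of Lemma~\ref{1linearizedset}; when the list empties, the stored solution is the answer. I would also verify that the numbers stay controlled: each linearization adds to the weights only differences of two $\Service_{\,\lazy\cU}$-values, each bounded by the current total weight, so over the at most $n^2$ nested decompositions the bit-length of the weights grows by only a polynomial amount.

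The main obstacle, as I see it, is not any single computation but bookkeeping the right-to-left dependency while keeping the invariants that Lemma~\ref{quadratic} needs, namely clique roots and \emph{proper} decompositions into proper induced subgraphs; that these survive the $1$-linearized step is precisely the content of Lemma~\ref{1sepdec} (the single added node $r$ lies in $A_2\subseteq V_2$), and that trading the symbolic template term $\Service_{\,\lazy\cU}\circ\Hom{\cU}$ for node weights preserves correctness is the linearization Fact. Graphs that resist proper decomposition, such as near-cliques, cause no difficulty: by hypothesis every member of $\cR\sm\cP$ admits a proper $1$-linearized decomposition into $\cR$, so any rooted graph reached that cannot be so decomposed must already lie in $\cP$ and is solved directly.
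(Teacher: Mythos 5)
Your proposal is correct and follows essentially the same route as the paper's proof: generate a proper $1$-linearized decomposition-list of quadratic length (Lemma~\ref{quadratic}), solve the problems from right to left so that each servant's values, converted into node weights via the linearization of Lemma~\ref{1linearizedset}, feed its master, and assemble the final solution left to right. The differences are cosmetic---the paper builds the entire list before solving rather than interleaving decomposition and solution, and it leaves implicit both the clique-root invariant needed to invoke Lemma~\ref{quadratic} and the explicit computation of $\gamma$ and $\sigma$, which you rightly spell out.
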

\begin{proof}
It follows from Lemma~\ref{quadratic}, that there is a polynomial time algorithm that
finds for any input $(G,Z)\in \cR$ on $n$ nodes
a proper $1$-linearized decomposition list
$\{(G_1,Z_1), \ldots,$ $ (G_m,Z_m)\}$ in $\cP$ with $m\le n^2$.
With such list at hand, a stable set problem on $(G,Z)$ reduces to solving a stable
set problem on each of the rooted graphs $\{(G_1,Z_1), \ldots, (G_m,Z_m)\}$.
For each $i=1,\ldots,m$ the node weights for the stable set problem on
for $(G_i,Z_i)$ can be determined from the solutions of the problems on
$\{(G_{i+1},Z_{i+1}), \ldots, (G_m,Z_m)\}$.
As each $(G_i,Z_i)$ is in $\cP$, we can solve the full list of problems
in polynomial-time,
provided that we do that going from right-to-left along the list, starting by $(G_m,Z_m)$ and
ending with $(G_1,Z_1)$. Having a found a solution to all problems on the list,
we can construct a solution of the original stable set problem on $(G,Z)$ 
by scanning the list of solutions from left-to-right.
\end{proof}
\subsubsection*{Amalgam decomposition}
An array $(V_1,A_1,K,A_2,V_2)$ of disjoint sets with union $V$ is
an {\em amalgam separation} for $G=(V,E)$, with amalgam $(A_1,K,A_2)$, if 
it has the following properties:
\begin{itemize}
\item
$A_1$ and $A_2$ are nonempty fully adjacent sets.
\item
$K$ is a (possibly empty) clique that is fully adjacent to $A_1\cup A_2$.
\item
$V_1$ is not adjacent to $V_2\cup A_2$ and
$V_2$ is not adjacent to $V_1\cup A_1$.
\item
$|V_1\cup A_1|\ge 2$ and $|V_2\cup A_2|\ge 2$.
\end{itemize}
Note that it is allowed that $K$ has neighbors in $V_1\cup V_2$.
\\[.5\baselineskip]
Amalgams were introduced by Burlet and Fonlupt \cite{BF1}
to design a polynomial-time algorithm to recognize Meyniel graphs,
a special class of perfect graphs.
Cunningham and Cornu\'ejols \cite{CC} designed a
polynomial-time algorithm that finds an amalgam separation
or decides that none exists.
In \cite{BF1,CC}, a graph with an amalgam separation
$(V_1,A_1,K,A_2,V_2)$ is decomposed into two {\em amalgam blocks} $G_1$ and $G_2$,
where, for both $i=1$ and $i=2$, the graph $G_i$ is obtained from $G_{V_i\cup A_i\cup K}$ by adding a single
new node that is fully adjacent to $A_i\cup K$.
We call the pair $G_1,G_2$ an {\em amalgam decomposition} of $G$
corresponding to $(A_1,K,A_2)$.
\\[.5\baselineskip]
Amalgams give rise to $1$-linearizable cutsets and
amalgam decompositions are $1$-linearized decompositions;
this is the next lemma.
\begin{lemma}\label{amalgamisonelin}
Let $(A_1,K,A_2)$ be an amalgam of a graph $G$.
Then $A_1\cup K$ is a $1$-linearizable cutset of $(G,\emptyset)$
with grouping $\cU=\{A_1\}\cup\{\{u\}:u\in K\}$.
Moreover, $(G,\emptyset)$ has a $1$-linearized decomposition such that
the master-graph and the server-graph form an amalgam decomposition of 
$G$ corresponding to amalgam $(A_1,K,A_2)$,
the master-root is empty, and the servant-root is the clique
$\lazy{\cU}$.
 \end{lemma}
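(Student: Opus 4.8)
The plan is to recognize the amalgam decomposition as precisely the $1$-linearized decomposition delivered by Lemma~\ref{1sepdec}, applied to the node cutset separation $(V_1,\,A_1\cup K,\,A_2\cup V_2)$ of the rooted graph $(G,\emptyset)$ together with the grouping $\cU=\{A_1\}\cup\{\{u\}:u\in K\}$ of $U:=A_1\cup K$. First I would verify that this is a genuine node cutset separation. The three sets are disjoint with union $V$, and $V_1$ is nonadjacent to $A_2\cup V_2$ because $V_1$ is not adjacent to $V_2\cup A_2$ in an amalgam. The collection $\cU$ partitions $U$; each singleton is a group, and $A_1$ is a group of $G_{(A_2\cup V_2)\cup U}$ because within that subgraph its neighbourhood is $A_2\cup K$, to which it is fully adjacent (recall $A_1$ is fully adjacent to $A_2$ and to $K$ and nonadjacent to $V_2$). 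Moreover any two members of $\cU$ are nonadjacent or fully adjacent, since $K$ is a clique fully adjacent to $A_1$. Finally $|V_1\cup U|=|V_1\cup A_1|+|K|>1+|K|=|\cU|$ by $|V_1\cup A_1|\ge 2$, and $|A_2\cup V_2|\ge 2>0$.

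Next I would invoke Lemma~\ref{1sepdec} with $A_1,K,A_2$ the amalgam sets themselves. Condition (\ref{1sepdec1}) holds since $K$ is a clique, $A_1=U\sm K$, and $A_2\subseteq A_2\cup V_2$; condition (\ref{1sepdec2}) holds since $A_1$ is fully adjacent to $K\cup A_2$ and nonadjacent to $(A_2\cup V_2)\sm A_2=V_2$. The converse half of the lemma then yields, for any $r\in A_2$, that $(G_{V_1\cup U\cup\{r\}},\emptyset)$ and $(G_{(A_2\cup V_2)\cup\lazy\cU},\lazy\cU)$ form a $1$-linearized node cutset decomposition whose servant-root is a clique; in particular $A_1\cup K$ is a $1$-linearizable cutset with grouping $\cU$, which is the first assertion.

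It remains to match the master- and servant-graphs with the amalgam blocks. The transversal $\lazy\cU$ consists of one chosen node $a\in A_1$ together with all of $K$, so $\lazy\cU=\{a\}\cup K$ is a clique (as $A_1$ is fully adjacent to the clique $K$); this is the servant-root, while the master-root stays the untouched $\emptyset$. For the master: inside $G_{V_1\cup A_1\cup K\cup\{r\}}$ the node $r\in A_2$ is adjacent to all of $A_1$ and all of $K$ but to nothing in $V_1$ (since $V_1$ is nonadjacent to $A_2$), so its neighbourhood is exactly $A_1\cup K$, matching the vertex adjoined in the amalgam block $G_1$; hence the identity on $V_1\cup A_1\cup K$ with $r$ sent to that vertex is an isomorphism onto $G_1$. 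Symmetrically, inside $G_{(A_2\cup V_2)\cup\{a\}\cup K}$ the node $a\in A_1$ is adjacent to exactly $A_2\cup K$, matching the vertex adjoined in $G_2$, so the servant-graph is isomorphic to $G_2$. Thus the decomposition is the amalgam decomposition corresponding to $(A_1,K,A_2)$, with empty master-root and clique servant-root $\lazy\cU$, as required. The only nonroutine step is this last identification: one must notice that the two vertices adjoined in the amalgam blocks can be realized by borrowing a node $r$ from $A_2$ on the master side and the transversal node $a$ from $A_1$ on the servant side, and then check that their neighbourhoods in the respective induced subgraphs are precisely those of the block's new vertex. Everything else is a direct translation of the amalgam axioms into the node-cutset and Lemma~\ref{1sepdec} language.
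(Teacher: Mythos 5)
Your proof is correct and follows essentially the same route as the paper's: both apply Lemma~\ref{1sepdec} to the node cutset separation $(V_1,\,A_1\cup K,\,V_2\cup A_2)$ with grouping $\cU=\{A_1\}\cup\{\{u\}:u\in K\}$ and then identify the resulting master- and servant-graphs with the amalgam blocks via the node $r\in A_2$ and the transversal node from $A_1$. The only difference is that you verify explicitly (the separation axioms, the grouping conditions, the cardinality bounds, the block isomorphisms) what the paper dispatches with ``observe'' and ``it is obvious''.
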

\begin{proof}
Let $(V_1,A_1,K,A_2,V_2)$ be an amalgam separation.
Then $(V_1,A_1\cup K,V_2\cup A_2)$ is a node cutset separation of $(G,\emptyset)$
with grouping $\cU=\{A_1\}\cup\{\{u\}:u\in K\}$.
Observe that $A_1,K,A_2$ satisfy (\ref{1sepdec1}) and (\ref{1sepdec2}) of
Lemma~\ref{1sepdec} with respect to that node cutset separation.
So $A_1\cup K$ is $1$-linearizable.
By the definition of amalgam, $A_2\neq\emptyset$. Take $r\in A_2$.
Now the second part of Lemma~\ref{1sepdec} tells  that the pair
$(G_{V_1\cup A_1\cup K\cup\{r\}},\emptyset)$,
$(G_{V_2\cup A_2\cup \, \lazy{\cU}}, \lazy{\cU})$ is
a $1$-linearized cutset decomposition.
It is obvious that 
$G_{V_1\cup A_1\cup K\cup\{r\}}$ and
$G_{V_2\cup A_2\cup\,\lazy{\cU}}=
G_{V_2\cup A_2\cup K\cup\{\lazynode{A_1}\}}$
 is an amalgam decomposition
corresponding to amalgam decomposition
$(V_1,A_1,K,A_2,V_2)$.
Clearly, $\lazy{\cU}$ is a clique.
\end{proof}%
\begin{corollary}
If $\cG$ is an amalgam decomposition-list of a graph $G=(V,E)$,
then $|\cG|\le |V|^2$.
\end{corollary}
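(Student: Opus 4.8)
The plan is to recognise an amalgam decomposition-list $\cG$ of $G$ as a proper $1$-linearized decomposition-list of the rooted graph $(G,\emptyset)$ and then to quote Lemma~\ref{quadratic}. Since $\emptyset$ is a clique and $G$ has $|V|$ nodes, that lemma returns exactly $|\cG|\le|V|^2$, so the entire task reduces to justifying this reinterpretation.

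The reinterpretation is delivered step by step by Lemma~\ref{amalgamisonelin}. I would replay the amalgam decompositions that build $\cG$ and, at each step, view the amalgam decomposition of the current graph along its amalgam $(A_1,K,A_2)$ as the $1$-linearized decomposition coming from the $1$-linearizable cutset $A_1\cup K$ with grouping $\cU=\{A_1\}\cup\{\{u\}:u\in K\}$. By Lemma~\ref{amalgamisonelin} the two rooted graphs produced are precisely the two amalgam blocks, the master keeps the current root and the servant receives the clique $\lazy{\cU}$ as its root. As the process starts from the clique root $\emptyset$ and every step leaves the master-root unchanged while handing the servant a clique root, all roots arising in the list are cliques; this is the standing hypothesis under which the load bookkeeping in the proof of Lemma~\ref{quadratic} is carried out. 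One also has to check that each successive amalgam keeps the accumulated clique root on the master side, so that the step is a legitimate $1$-linearized decomposition of the rooted block, but this is a routine compatibility verification.

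The hard part will be properness, since Lemma~\ref{quadratic} counts only lists in which no near-clique is ever decomposed, whereas an amalgam lives on a plain graph and may sit inside a near-clique (already on four or five nodes one finds graphs that carry an amalgam yet have a clique missing only two vertices). The crux is to show that such improper steps are harmless, and here the computation is clean. When a rooted graph $(G',Z')$ with clique root $Z'$ is split along an amalgam $(A_1',K',A_2')$, the servant side contains the two-or-more nodes of $V_2'\cup A_2'$, all disjoint from $Z'\subseteq V_1'\cup A_1'\cup K'$, so $\load{(G',Z')}=|V(G')|-|Z'|-1\ge 1$ always. It fails to reach the value $2$ required for properness only if $|V_2'\cup A_2'|=2$ and $Z'$ is the whole master set $V_1'\cup A_1'\cup K'$; but then $Z'$ being a clique forces the cutset $A_1'\cup K'$ to be a clique, so by the identity in the proof of Lemma~\ref{quadratic} the master has load $-1$ and is trivial, while (using $|V_1'\cup A_1'|\ge 2$) the servant keeps a positive load and a strictly smaller clique root. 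In other words, every improper amalgam step is exactly one of the \emph{special} decompositions treated in the proof of Lemma~\ref{quadratic}, whose analysis there never invokes properness and which can recur only a linear number of times per area before the root is exhausted; dually, every amalgam step with $\load\ge 2$ is automatically proper. Feeding this dichotomy into the same counting as in Lemma~\ref{quadratic} shows that $\cG$ obeys the quadratic estimate, and hence $|\cG|\le|V|^2$.
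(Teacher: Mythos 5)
Your overall route is exactly the paper's: its proof of this corollary consists precisely of the two steps you outline, namely using Lemma~\ref{amalgamisonelin} to equip each member of $\cG$ with a clique root so that $\cG$ becomes a $1$-linearized decomposition-list of $(G,\emptyset)$, and then quoting Lemma~\ref{quadratic}. You are also right that this leaves a wrinkle which the paper passes over silently: Lemma~\ref{quadratic} is stated for \emph{proper} lists, and a graph with an amalgam can perfectly well be a near-clique, so some patch of the kind you attempt is genuinely called for.

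The patch itself, however, contains a false claim. Properness is a property of the graph alone ($G'$ not being a near-clique), not of the load, so your dichotomy ``every improper amalgam step is special; every step with load at least $2$ is proper'' fails. Label $P_4$ as $x\!-\!a\!-\!b\!-\!y$ and take $Z'=\emptyset$: this graph is a near-clique ($n=4$ and any edge is a clique of size $n-2$) and $(\{x\},\{a\},\emptyset,\{b\},\{y\})$ is an amalgam separation, so the step is improper; yet $\load{(G',Z')}=3$ and both blocks receive positive load, so it is a normal decomposition, not a special one. Moreover, in your boundary case $Z'=V_1'\cup A_1'\cup K'$ the master is not trivial with load $-1$ as you assert: an amalgam block always carries the extra marker node $r\in A_2'$ (Lemma~\ref{amalgamisonelin} adds $r$ whether or not $A_1'\cup K'$ is a clique), so the master is $G'_{Z'\cup\{r\}}$ with load $0$. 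The correct salvage is simpler than your dichotomy: the only place the proof of Lemma~\ref{quadratic} invokes properness is to guarantee $\load{(G,Z)}\ge 2$, which is in turn used only to conclude that at least one of master and servant gets positive load; for an amalgam step the servant load equals $|V_2'\cup A_2'|-1\ge 1$ unconditionally, so the normal-case invariants hold for every amalgam step (and in the load-$1$ case one recovers the special-case profile: total load and the number of positive-load members unchanged, total root size strictly decreased), near-clique or not. Finally, the point you wave off as ``a routine compatibility verification''---that the inherited clique root always lies on one side of the next amalgam---is where real care is needed: a clique may meet both $A_1'$ and $A_2'$, since these sets are fully adjacent (compare the case analysis in the proof of Theorem~\ref{amalgamrecursion}, where a straddling root forces the enlarged cutset $A_1\cup K\cup Z$ and hence blocks that are \emph{not} the amalgam blocks); neither your write-up nor, to be fair, the paper's own two-line proof settles this.
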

\begin{proof}
By Lemma~\ref{amalgamisonelin}, there is a map $H\mapsto Z_H$ from
$\cG$ to cliques, so that
$\{(H,Z_H)\,:\,H\in \cG\}$ is a $1$-linearized decomposition-list of
$(G,\emptyset)$.
Now Lemma~\ref{quadratic} yields $|\cG|\le |V|^2$.
\end{proof}
\noindent
Theorem~\ref{capdec} uses amalgams to describe the structure
of cap-free graphs with no even holes.
When we wanted to use that to design
an algorithm for the stable set problem on
these graphs, we ran into the multiple-problem aspect of the servant.
Cornu\'ejols and Cunningham \cite{CC} can find an amalgam separation with
minimal servant, but as illustrated by Figure \ref{hiddenamalgam},
that does not guarantee that the
amalgam blocks will have no amalgams. 
This lead us to the rooted graph approach of this paper.
It is essential here and to our knowledge new;
till today we know of no other way to use
amalgam decompositions in polynomial-time algorithms for 
finding maximum weight stable sets.
\begin{theorem}\label{amalgamrecursion}
Let $\cG$ be a class of graphs closed under amalgam decomposition.
If $\cP\subseteq \cG$ contains all near-cliques
and all members of $\cG$ without amalgams,
then the stable set problem on graphs in $\cG$ 
is solvable in polynomial time
if and only if the stable set problem on graphs in $\cP$ 
is solvable in polynomial time.
\end{theorem}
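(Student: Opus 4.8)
The plan is to mirror the proof scheme of Theorem~\ref{semiamalgamrecursion}, recasting the (unrooted) amalgam recursion as a $1$-linearized recursion on rooted graphs. I would let $\cR$ be the class of rooted graphs $(H,Z)$ with $H\in\cG$ and $Z$ a clique of $H$, and let $\cQ$ be the subclass with additionally $H\in\cP$; since $\cP\subseteq\cG$ we have $\cQ\subseteq\cR$. The stable set problem on a graph $G\in\cG$ is exactly the stable set problem on the rooted graph $(G,\emptyset)\in\cR$ (with $Z=\emptyset$ a clique), so it suffices to solve the rooted stable set problem on $\cR$ in polynomial time, and I would do this by applying Theorem~\ref{semiamalgamrecursion} to the pair $\cR,\cQ$. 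The forward implication of the ``if and only if'' is immediate because $\cP\subseteq\cG$; the content is the reverse implication, which follows once the two hypotheses of Theorem~\ref{semiamalgamrecursion} are verified.

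First I would supply the base-case algorithm on $\cQ$. There the root $Z$ is a clique, so $\stable{Z}$ consists of $\emptyset$ and the $|Z|$ singletons. For each such stable set $R$ I would read off the required maximum from a single ordinary maximum-weight stable set computation on the \emph{same} graph $H\in\cP$, after a weight modification: give the nodes of $Z\sm R$ a prohibitively negative weight to forbid them, and, when $R=\{z\}$, give $z$ a large bonus to force it, subtracting the bonus afterwards. This stays inside $\cP$ (no induced-subgraph passage is needed), so the assumed polynomial algorithm for $\cP$ yields one for $\cQ$.

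Next I would build the decomposition oracle on $\cR\sm\cQ$. If $(H,Z)\in\cR\sm\cQ$ then $H\in\cG\sm\cP$, so, as $\cP$ contains all near-cliques and all amalgam-free members of $\cG$, the graph $H$ is not a near-clique and has an amalgam; an amalgam separation $(V_1,A_1,K,A_2,V_2)$ can be found in polynomial time by Cornu\'ejols and Cunningham~\cite{CC}. After orienting the amalgam so that $Z$ lies on the master side, Lemma~\ref{amalgamisonelin}---whose linearization argument, through Lemmas~\ref{1linearizedset} and~\ref{1sepdec}, is insensitive to the root once $Z\subseteq V_1\cup A_1\cup K$---turns this into a $1$-linearized decomposition of $(H,Z)$ whose two blocks form an amalgam decomposition of $H$, and it is proper because $H$ is not a near-clique. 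Closure of $\cG$ under amalgam decomposition puts both blocks in $\cG$, the inherited master-root stays the clique $Z$, and the servant-root $\lazy{\cU}$ is again a clique; hence the decomposition is into $\cR$. With both hypotheses in hand, Theorem~\ref{semiamalgamrecursion} finishes the reverse implication.

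The step I expect to be the crux is precisely orienting the amalgam so that the clique root lies entirely on the master side. Because of the nonadjacencies $V_1$--$V_2$, $V_1$--$A_2$ and $A_1$--$V_2$, a clique $Z$ can meet the five parts only in one of the patterns $Z\subseteq V_1\cup A_1\cup K$, $Z\subseteq V_2\cup A_2\cup K$, or $Z\subseteq A_1\cup K\cup A_2$. The first two are handled by choosing the matching orientation. The third, ``straddling'' pattern is the obstacle: when $Z\cap A_1$ and $Z\cap A_2$ are both nonempty, neither orientation keeps $Z$ on one side, and one cannot in general absorb the offending nodes into $K$ since $A_1$ and $A_2$ need not be cliques. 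I would resolve this by re-choosing the separation: if $A_1\sm Z$ or $A_2\sm Z$ is empty then $Z\cup K$ is a clique cutset, giving an admissible $0$-linearized decomposition with $Z$ inside the cutset; the remaining ``thick'' straddle I would rule out for the cliques that actually occur as roots, each being a servant-root $\lazy{\cU}=\{\lazynode{A_1}\}\cup K$ of an earlier amalgam, whose structure (a clique plus a single group-representative) prevents it from thickly straddling a fresh amalgam. Pinning down this last structural claim is where the real work lies.
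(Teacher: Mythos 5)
Your framework---rooted graphs with clique roots, the classes $\cR$ and $\cQ$, the reduction to Theorem~\ref{semiamalgamrecursion}, the Cornu\'ejols--Cunningham search, and the orientation of the amalgam when $Z$ meets $V_1\cup V_2$---coincides with the paper's proof, and your enumeration of how a clique can meet the five parts is correct. But the one step you defer, the ``thick straddle'' $Z\subseteq A_1\cup K\cup A_2$ with $Z\cap A_1$, $Z\cap A_2$, $A_1\sm Z$, $A_2\sm Z$ all nonempty, is exactly the crux, and the device you sketch for it does not work. The roots occurring along a decomposition list are not only servant-roots of the immediately preceding amalgam: a master carries its root $Z$ unchanged through arbitrarily many later decompositions of a changing graph, so no fixed relation between the root and ``an earlier amalgam'' survives; and even a fresh servant-root of the form $\{\lazynode{A_1}\}\cup K$ is just some clique of the block $G_2$---nothing in the definition of an amalgam prevents the next separation $(V'_1,A'_1,K',A'_2,V'_2)$ of $G_2$ from placing $\lazynode{A_1}$ in $A'_1$ and a node of $K$ in $A'_2$ with both $A'_1\sm Z'$ and $A'_2\sm Z'$ nonempty. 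So the structural invariant you hope to pin down is false as stated, and the route through it is a dead end.

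The paper closes the case directly, by a move you explicitly talked yourself out of: you rejected absorbing the offending nodes ``into $K$'' because $A_1$ and $A_2$ need not be cliques, but only $Z\cap A_2$---a subset of the clique $Z$---needs absorbing, and it goes into the clique part of the $1$-linearizable cutset of Lemma~\ref{1sepdec}, not into the amalgam's $K$. Concretely: since $Z\subseteq A_1\cup K\cup A_2$, the set $K\cup Z$ is a clique ($K$ is fully adjacent to $A_1\cup A_2$, and $A_1$ to $A_2$); take the cutset $U=A_1\cup K\cup Z$ with grouping $\cU=\{A_1\}\cup\{\{u\}:u\in K\cup (Z\cap A_2)\}$. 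The hypotheses of Lemma~\ref{1sepdec} hold with clique part $K\cup(Z\cap A_2)$ and group $A_1$ (which may contain root nodes and need not be a clique---it remains a group), because $Z\cap A_2$ is fully adjacent to $A_1$; all nonadjacent pairs of $U$ lie inside $A_1$, as Lemma~\ref{1linearizedset} requires. Since $G$ is not a near-clique, at least three nodes lie outside the clique $K\cup Z$, and by pigeonhole two of them lie on one side, say in $A_2\cup V_2$; then either $A_2\sm Z\neq\emptyset$ and any $r\in A_2\sm Z$ gives a proper $1$-linearized decomposition (the servant being precisely the block $G_2$ with clique root $\{\lazynode{A_1}\}\cup K\cup(Z\cap A_2)$), or $A_2\subseteq Z$ and Lemma~\ref{1sepdec} degenerates to the $0$-linearized clique-cutset decomposition that your fallback correctly anticipated. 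The rest of your write-up (the weight-modification algorithm on $\cQ$, the forward implication) is fine, but without this cutset-enlargement idea the proof is incomplete.
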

\begin{proof}
Let $\cR$ be the class of rooted graphs $(G,Z)$ such that $G\in \cG$ and 
$Z$ is a clique. The stable set problem on a graph $G\in \cG$
is a rooted stable set problem on $(G,\emptyset)$, which is in $\cR$.
So it suffices to prove that the stable set problem on rooted graphs
in $\cR$ is solvable in polynomial time.
Let $\cQ$ the class of rooted graphs $(G,Z)$ with $G\in \cP$ and
such that $Z$ is a clique. 
As a clique $Z$ has only $|Z|+1$ stable sets,
the stable set problem on rooted graphs in $\cQ$ is solvable in polynomial time.
So by Theorem~\ref{semiamalgamrecursion}
it suffices to design
a polynomial-time algorithm that finds a proper $1$-linearized decomposition for
any rooted graph $(G,Z)$ in $\cR\setminus Q$.

Here is this algorithm:
If $G$ is a near-clique, $(G,Z)$ is in $\cQ$.
Otherwise, use the algorithm of Cornu\'ejols and Cunningham \cite{CC}
to search for amalgams in $G$. If none is found:
$G\in \cP$, so in $(G,Z)\in \cQ$.
If an  amalgam separation $(V_1,A_1,K,A_2,V_2)$ is found, proceed as follows
to find a $1$-linearized decomposition for $(G,Z)$, which will be proper as $G$ is not a near-clique.
If the clique $Z$ meets $V_1\cup V_2$, the root $Z$ is contained in
$V_1\cup A_1\cup K$ or in $V_2\cup A_2\cup K$, so then
one of the node cutsets $A_1\cup K$ and $A_2\cup K$
yields a $1$-linearized decomposition of $(G,Z)$.
If $Z$ does not meet $V_1\cup V_2$, it lies in 
$A_1\cup K\cup A_2$ and thus $K\cup Z$ is a clique.
Since $G$ is not a near-clique, it has at least 3 nodes outside $K\cup Z$.
Assume two of those lie in $A_2\cup V_2$.
Then the node cutset $A_1\cup K\cup Z$
yields a $1$-linearized decomposition of  $(G,Z)$.
\end{proof}
\begin{figure}[htb]
    \hspace{2.5cm}
    \includegraphics[height=5cm]{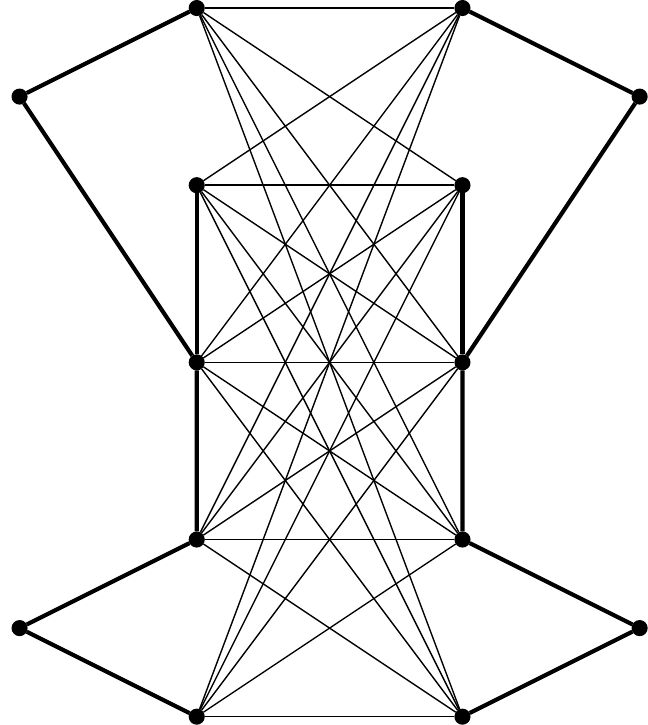}
    \hfill
    \includegraphics[height=5cm]{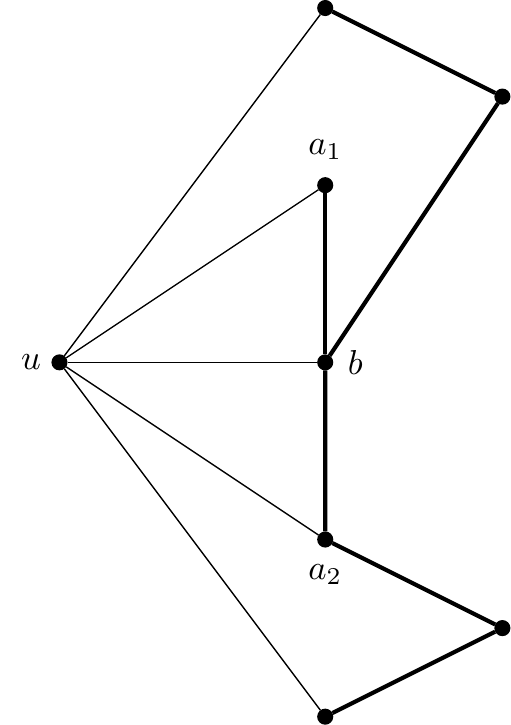}
    \hspace{2.5cm}
    \caption{The amalgam of the graph on the left is unique
(in this case the clique $K$ is empty). Its blocks have amalgams,
for instance $(\{a_1,a_2\},\{u\},\{b\})$.\label{hiddenamalgam}}
\end{figure}%
\subsection{Decomposing into fan-templates---Proof of Theorem 1}\label{pfthm1}
To prove Theorem \ref{algtheorem} we use Theorem \ref{capdec}, which
decomposes cap-free graphs into triangle-free graphs, and
Theorem \ref{tf}, which further decomposes triangle-free ``odd-signable''
graphs into ``the cube'' and ``fan-templates''.

A graph is {\em odd-signable} if it contains a set $F$ of edges such that
$|F\cap C|$ is odd for each chordless cycle $C$.
Triangle-free graphs with no even holes are clearly odd-signable.

{\em The cube} is the unique 3-regular bipartite graph on 8 vertices, so that is
the 1-skeleton of the three dimensional cube. The cube is odd-signable.

A {\em fan with base $(u,c,v)$} consists of an
$uv$-path $P$ together with a node
$c$ adjacent to a subset of nodes of $P$ including $u$ and $v$.
If $Z$ is a subset of the base of a fan $G=(V,E)$ and
$\Omega$ is a collection of triples in $V$, then we call
$(G, Z, \Omega)$ a {\em fan-template}.
If the triples in $\Omega$ each induce a subpath of one of the holes of $G$,
we call the fan-template {\em good}.
The following results say that good fans do come up in decomposing cap-free
odd-signable graphs and that they are well tractable.
\begin{theorem}[{\cite[Theorems 2.4 and 6.4]{TFODD}}]\label{tf}
If $G=(V,E)$ is a triangle-free odd-signable graph that is not isomorphic to the cube
and has no clique cutset,
then the template $(G,\varnothing,\varnothing)$ can in polynomial time
be decomposed into list of at most $|V|$ good fan-templates.
\end{theorem}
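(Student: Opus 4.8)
The plan is to reduce the statement to the two cited results of \cite{TFODD} and to recast their decompositions inside the template framework developed above. Theorem~2.4 of \cite{TFODD} is a structure theorem for triangle-free odd-signable graphs, while Theorem~6.4 supplies a polynomial-time procedure that either exhibits a decomposing cutset or certifies that the graph is basic. First I would pin down precisely which cutset \cite{TFODD} delivers and check that, under the hypotheses that $G$ is triangle-free, has no clique cutset, and is not the cube, that cutset is a node cutset separation $(V_1,U,V_2)$ whose cutset $U$ induces a chordless $3$-node path $(u,c,v)$. Since $G$ is triangle-free, $u$ and $v$ are nonadjacent and the natural grouping is into the three singletons $\{u\},\{c\},\{v\}$; thus we land exactly in the templates-with-singleton-regions setting, and each such step is a legitimate master/servant (template) decomposition.

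Next I would trace a single decomposition through the machinery. Cutting along the $3$-node-path cutset $U$ yields a master-graph and a servant-graph, both proper induced subgraphs of $G$, with servant-root $\lazy\cU$ a copy of that path. By Fact~\ref{record} the correction term $\Service_{\lazy\cU}\circ\Hom{\cU}$ is encoded by a record on the five stable sets of a $3$-node path, i.e.\ by adjoining the triple $\{u,c,v\}$ to $\Omega$ as a new region. Hence every decomposition step appends to $\Omega$ exactly the $3$-node path along which we cut. When the recursion terminates, each undecomposable non-cube piece must, by Theorem~2.4 of \cite{TFODD}, be a fan $G'$, and the triples it has inherited in $\Omega$ are among the paths that were cut; the point to verify is that each such triple lies on $G'$ and plays the role of (a segment of) the base $(u,c,v)$ of that fan, so that the piece is a genuine fan-template.

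The third step is to establish \emph{goodness}: every triple in the accumulated $\Omega$ of a terminal piece should induce a subpath of a hole of that piece. This is where the odd-signable, triangle-free hypothesis does the real work, since the cutsets of \cite{TFODD} are chosen so that the separating path is a chordless path lying on a hole of $G$. I would argue that this property is preserved under restriction to the master and the servant: a hole of $G$ meeting the cutset is split into chordless subpaths, and within whichever piece retains the relevant segment these close up again into a hole, so each surviving region still induces a chordless subpath of an actual hole. Checking that goodness survives every restriction---that no region degenerates or acquires a chord and that the terminal pieces really are fans rather than larger graphs---is the step I expect to be the main obstacle, as it requires tracking how the holes of $G$ distribute across the two pieces.

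Finally I would count and bound the running time. Each decomposition replaces a piece by two proper induced subgraphs whose areas partition the area of the original (Fact~\ref{size}), so the area/load bookkeeping of Lemma~\ref{graphcount} bounds the number of non-trivial rooted graphs in the decomposition list by $|V|$; the terminal non-cube pieces are precisely the good fan-templates, which yields the bound of at most $|V|$ of them. Polynomiality is then immediate: Theorem~6.4 of \cite{TFODD} locates each cutset in polynomial time, and by the same bookkeeping there are only linearly many decomposition steps.
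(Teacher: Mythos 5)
Your recursion runs in the wrong direction, and that is where the gap lies. You treat the cited results as a standard ``basic or has a cutset'' dichotomy: cut along whatever 3-node-path cutset is available, recurse, and claim each terminal piece ``must, by Theorem~2.4 of \cite{TFODD}, be a fan''. Theorem~2.4 does no such work: the paper invokes it only to conclude that $G$ has no induced subgraph isomorphic to the cube, which is the hypothesis needed to apply Theorem~6.4. Theorem~6.4 in turn is a \emph{construction} theorem: $G$ can be obtained from a hole by a sequence of good ear additions (Definition~6.1 of \cite{TFODD}). Nothing you cite guarantees that a piece admitting no 3-path cutset is a fan, nor that your pieces remain in the class you started from---in particular ``has no clique cutset'' is not preserved under passing to induced subgraphs, so the hypotheses of the cited theorems are not available inductively. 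The paper's proof avoids all of this by simply reversing the ear-addition sequence furnished by Theorem~6.4: each ear addition, read backwards, is a node cutset decomposition whose servant graph is the added fan and whose cutset is that fan's base, so the pieces of the list are the ears themselves (together with the initial hole, which is itself a fan), and the bound of at most $|V|$ pieces is immediate because every ear addition strictly increases the number of nodes.

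The same inversion undoes your goodness argument. You correctly flag as ``the main obstacle'' that each accumulated triple must induce a subpath of a hole of the terminal piece it lands in, and you sketch that holes of $G$ ``close up again'' inside the pieces---but for arbitrary 3-path cutsets this can fail (a hole of $G$ through a region need not restrict to, or reassemble into, a hole of the piece), and you give no proof. In the paper's route there is nothing left to prove: the regions of the templates are exactly the locations where later ears were attached, and the goodness of the fan-templates is precisely the goodness of the ear additions, inherited verbatim from \cite[Definition 6.1]{TFODD}. Likewise, polynomial time does not come from an algorithmic reading of Theorem~6.4 (which you assert ``locates each cutset in polynomial time''), but from the trivial observation that the cutsets are triples, so they can be found by enumerating all $O(|V|^3)$ of them. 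Your template bookkeeping---singleton groups, records on the five stable sets of a 3-node path, the count via Fact~\ref{size} and Lemma~\ref{graphcount}---is consistent with the paper, but the engine of the proof, namely using the ear decomposition itself as the decomposition list, is missing.
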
%
\begin{proof}
By \cite[Theorem 2.4]{TFODD},
$G$ has no induced subgraph isomorphic to the cube.
Now \cite[Theorem 6.4]{TFODD} says that $G$ can be obtained from the hole
by a sequence of ``good ear additions'' (defined in \cite[Definition 6.1]{TFODD}).
An ear addition is the reverse of a node cutset decomposition where
the servant graph is a fan and the node cutset is the base of the fan.
So reversing the sequence of good ear additions amounts to a decomposition of
$G$ into fan-templates. The regions of these templates are the locations where the
ears are added and the goodness of these ear additions means
that the fan-templates are good.
Since adding an ear increases the size of the graph,
we obtain at most at most $|V|$ good fan-templates.
As the node cutsets needed for this decomposition are triples, we can find them in polynomial time, by enumeration.
\end{proof}
\begin{lemma}\label{zipfan}
If $(G, Z, \Omega)$ is a good fan-template with base $(u,c,v)$ and $n$ nodes,
then $G(\Omega)-c$ can be decomposed
along 2-node clique cutsets into a list of at most $|\Omega|+n$ graphs, each with at most 8 nodes.
\end{lemma}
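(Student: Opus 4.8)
The plan is to delete $c$, read off the structure of $G(\Omega)-c$, and then peel the records off the remaining path one by one, using edges of the path as the $2$-node clique cutsets.

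\emph{Structure.} Since $G$ is a fan with base $(u,c,v)$, deleting the apex $c$ leaves the $uv$-path $P:=G-c$ on the nodes $p_1,\dots,p_{n-1}$ (with $p_1=u$, $p_{n-1}=v$). For each triple $W\in\Omega$ the graph $G(\Omega)$ carries a record $\record{W}$: a clique on the five nodes $\recordnode{T}$, $T\in\stable{G_W}$, where $\recordnode{T}$ is adjacent exactly to $W\setminus T$ and to nothing else. Hence records of distinct triples are nonadjacent, and once $c$ is deleted the only neighbours of $\record{W}$ outside itself are the path-nodes in $W\setminus\{c\}$. The first and decisive step is to extract from goodness that $W\setminus\{c\}$ is always a set of at most three \emph{consecutive} nodes of $P$: either $W=\{p_{i-1},p_i,p_{i+1}\}$ is a $3$-node subpath of $P$, or $W=\{c,p_i,p_{i+1}\}$, in which case after deleting $c$ the record $\record{W}$ hangs on the single edge $\{p_i,p_{i+1}\}$.

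\emph{Decomposition.} I will only use the edges $\{p_i,p_{i+1}\}$ of $P$ as clique cutsets, peeling records from left to right. If $\record{W}$ hangs on an edge $\{p_i,p_{i+1}\}$, then after removing those two nodes $\record{W}$ is a full connected component, so one cut at $\{p_i,p_{i+1}\}$ splits off the $7$-node piece $\record{W}\cup\{p_i,p_{i+1}\}$ and leaves the rest with $P$ intact. If $\record{W}$ hangs on an interior $3$-subpath $\{p_{i-1},p_i,p_{i+1}\}$, its neighbourhood is not a clique, so I use two cuts, first at $\{p_i,p_{i+1}\}$ and then, inside the piece still carrying $\record{W}$, at $\{p_{i-1},p_i\}$; this splits off the $8$-node piece $\{p_{i-1},p_i,p_{i+1}\}\cup\record{W}$ and cuts $P$ into the subpaths $p_1\cdots p_i$ and $p_i\cdots p_{n-1}$ (sharing $p_i$). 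A $3$-subpath at an end of $P$ peels off in a single cut. Because distinct records are nonadjacent and each hangs on its own window, every not-yet-peeled record stays inside a single residual piece, so this is well defined. Finally I decompose the residual subpaths of $P$ along their edges into pieces of at most $8$ nodes.

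\emph{Counting.} Each clique cut increases the number of pieces by one, so the number of pieces equals one plus the number of cuts. Every record is peeled with at most two cuts, and the records needing two are the interior $3$-subpath windows; let $t$ be their number, so there are at most $t\le n-3$ of them and the record peels use at most $|\Omega|+t$ cuts. Only these two-cut peels increase the residual of $P$, each by one subpath and one duplicated node, so afterwards $P$ splits into at most $1+t$ subpaths totalling at most $(n-1)+t$ nodes. Splitting these into $\le 8$-node pieces costs at most $\bigl((n-1+t)-2(1+t)\bigr)/6=(n-3-t)/6$ further cuts. Thus the total number of pieces is at most $|\Omega|+t+(n-3-t)/6+1$, which is at most $|\Omega|+n$ exactly when $5t\le 5n-3$, i.e.\ $t\le n-1$; this holds since $t\le n-3$. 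By construction every piece has at most $8$ nodes.

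\emph{Main obstacle.} The genuine work is the structural reduction in the first step. The danger case is a triple $W=\{x,c,y\}$ with $c$ in the middle and $x\not\sim y$: such a triple is a legitimate subpath of the hole formed by $c$ and the $x$--$y$ subpath of $P$, yet after deleting $c$ the record node $\recordnode{\emptyset}$ (adjacent to both $x$ and $y$) reconnects $x$ and $y$ and recreates an induced cycle of the original hole-length. An induced cycle has no clique cutset, so if such a reconstructed hole were long the conclusion would fail outright. Ruling these out from the goodness hypothesis—equivalently, showing that deleting the apex $c$ destroys every long hole and leaves each record attached to at most three consecutive path-nodes—is the crux on which the whole argument rests.
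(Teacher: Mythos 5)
Your argument is essentially the paper's own proof: the paper likewise views $G(\Omega)-c$ as the path $G-c$ with pairwise nonadjacent $5$-node records attached to $2$- or $3$-node subpaths, notes that each edge of the path is a $2$-node clique cutset (except possibly the extreme ones), cuts along these edges, and observes that every resulting piece is a $2$- or $3$-node subpath together with at most one record, hence has at most $8$ nodes. Your left-to-right peeling and the explicit bookkeeping of cuts versus pieces is just a more careful rendering of the count, which the paper compresses into the bare assertion that the list has at most $|\Omega|+n$ members; your arithmetic is sound (modulo the harmless point that a single cutset removal can produce more than two components, which only speeds the process up).

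The one step you leave open is, however, also left open by the paper. Its proof simply asserts that since each region is a $3$-node subpath of a hole of $G$, ``each record \ldots is attached in $G(\Omega)-c$ to a 2- or 3-node subpath of $G(\Omega)-c$'' --- which is exactly the reduction you flag as the crux, and which is \emph{not} a valid inference from the paper's literal definition of goodness: a triple $a$-$c$-$b$ with the apex in the middle does induce a subpath of a hole, so it is admissible under that definition, and then, just as you observe, the record node $\recordnode{\{c\}}$ (or $\recordnode{\emptyset}$) closes the hole minus $c$ into an induced cycle of the original hole length in $G(\Omega)-c$; since no clique cutset decomposition can split an induced cycle across pieces, the lemma would fail whenever such a hole is long. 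So the exclusion of apex-middle triples must be imported from the actual source of the templates, namely the good ear additions of \cite[Definition 6.1]{TFODD} invoked in Theorem~\ref{tf}, not from the abbreviated definition of ``good'' stated in this paper. In short: your decomposition and counting match the paper and are correct given the attachment claim, and the obstacle you honestly flag is a genuine gap --- but it is a gap in the paper's exposition exactly as much as in your attempt, and you deserve credit for making it explicit and for exhibiting why the apex-middle case is fatal rather than merely inconvenient.
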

\begin{proof} 
The graph $G(\Omega)-c$ consists of the path $G-c$
together with all the records.
As each region of a fan-template is a 3-node subpath of
one of the holes of $G$, each record has 5 nodes and is attached in
$G(\Omega)-c$ to a 2- or 3-node subpath of $G(\Omega)-c$.
Note that each edge of the path $G(\Omega)-c$ forms a 2-node clique cutset
of $G(\Omega)-c$ (except maybe the first or the last edge of $G(\Omega)-c$).
If we decompose $G(\Omega)-c$ along all these 2-node clique
cutsets, we obtain collection of graphs, each consisting of
a 2- or 3-node subpath of $G(\Omega)-c$ together with at most one of the records.
Such graphs have at most 8 nodes.
\end{proof}
\begin{lemma}\label{lem2gen}
The stable set problem on rooted record graphs of good fan-templates
is solvable in polynomial time.
\end{lemma}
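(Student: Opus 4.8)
The plan is to solve the stable set problem on a rooted record graph $(G(\Omega),Z)$ by reducing it, through a bounded amount of branching, to ordinary weighted stable set problems that each decompose along $2$-node clique cutsets into constant-size pieces as in Lemma~\ref{zipfan}.

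First, since $Z$ is contained in the base $(u,c,v)$ of the fan, we have $|Z|\le 3$, so $\stable{Z}$ has only a constant number of members. It therefore suffices to solve, for each $R\in\stable{Z}$ separately, the single problem of finding a maximum-weight stable set $S_R$ in $G(\Omega)$ with $Z\cap S_R=R$. I would in fact enumerate over the constantly many pairs consisting of a set $R\in\stable{Z}$ together with a guess of whether $c\in S_R$, discarding the pairs that are inconsistent (e.g. $c$ guessed in but $c\in Z\setminus R$). For each surviving pair, I force the chosen nodes into the solution---adding their weights and deleting their neighbours---and delete the nodes forbidden by $R$ and by the guess on $c$.

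The point of also guessing the status of $c$ is that $c$ is the one obstruction to Lemma~\ref{zipfan}: being adjacent to many path nodes, $c$ never sits in a small clique cutset, which is exactly why Lemma~\ref{zipfan} works with $G(\Omega)-c$. After the forcing above, $c$ has either been deleted (guessed out, or a neighbour of a forced base node) or forced in and removed together with $\neighbor{G(\Omega)}{c}$; either way the residual instance is an ordinary weighted stable set problem on an induced subgraph of $G(\Omega)-c$. Such deletions only split the underlying path $G-c$ into shorter subpaths and drop some record nodes, so the proof of Lemma~\ref{zipfan} applies unchanged: every surviving path edge is a $2$-node clique cutset and every record remains attached to a $2$- or $3$-node subpath, so the residual graph decomposes along $2$-node clique cutsets into $O(|\Omega|+n)$ graphs, each on at most $8$ nodes.

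It then remains to solve a weighted stable set problem on a graph that decomposes along $2$-node clique cutsets into $O(|\Omega|+n)$ pieces of at most $8$ nodes. Each piece is solved by brute force in constant time, and the pieces are recombined along the clique cutsets by the standard clique-cutset composition underlying Corollary~\ref{cliquerecursion}; as there are only linearly many pieces, this is polynomial in the size $n+5|\Omega|$ of $G(\Omega)$. Taking, for each $R\in\stable{Z}$, the best value over the admissible guesses for $c$ yields $S_R$, and collecting these solves the rooted problem. The one step needing care is the claim that removing $\neighbor{G(\Omega)}{c}$ preserves the clique-cutset structure, but as noted this is immediate from the proof of Lemma~\ref{zipfan}.
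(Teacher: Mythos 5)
Your proposal is correct and follows essentially the same route as the paper: enumerate the constantly many stable sets in the root $Z\subseteq\{u,c,v\}$, reduce to $G(\Omega)-c$, and solve via Lemma~\ref{zipfan} together with the clique-cutset machinery of Corollary~\ref{cliquerecursion}. The only difference is that you make explicit the branching on the status of $c$ (and check that the induced subgraphs arising after forcing still decompose into pieces of at most $8$ nodes), a step the paper's one-line proof leaves implicit but uses in exactly this form in its polyhedral analogue, Lemma~\ref{hole-template-pol}.
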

\begin{proof}
Let $(u,c,v)$ be the base of a good fan-template $(G, Z, \Omega)$.
By Corollary \ref{cliquerecursion} and Lemma \ref{zipfan},
for each of the (at most 5) stable sets $T$ in $Z$,
we can find in polynomial time,
a stable set $S_T$ in 
$G(\Omega)$ that has maximum weight
among those that intersect $Z$ in $T$.
Among these stable sets $S_T$, we choose the best one.
\end{proof}
\begin{theorem}\label{thm2gen}
The stable set problem on cap-free odd-signable graphs
is solvable in polynomial time.
\end{theorem}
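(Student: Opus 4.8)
The plan is to treat the stable set problem on cap-free odd-signable graphs as a rooted problem and solve it by a single $1$-linearized decomposition recursion that uses clique cutsets and amalgams together, bottoming out at triangle-free graphs. Write $\cG$ for the class of cap-free odd-signable graphs. First I would record that $\cG$ is closed under taking induced subgraphs: a cap is an induced configuration, and an odd-signing restricts to any induced subgraph because an induced cycle of a subgraph is an induced cycle of the whole graph. Consequently $\cG$ is closed under clique cutset decomposition (the blocks $G_{V_i\cup K}$ are induced subgraphs) and under amalgam decomposition: for an amalgam $(A_1,K,A_2)$ and any $r\in A_2$, the block $G_{V_1\cup A_1\cup K\cup\{r\}}$ used in Lemma~\ref{amalgamisonelin} is literally an induced subgraph of $G$, since such an $r$ has exactly the adjacencies into $V_1\cup A_1\cup K$ that the amalgam marker must have. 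I would then pass to the class $\cR$ of rooted graphs $(G,Z)$ with $G\in\cG$ and $Z$ a clique; the original problem on $G$ is the rooted problem on $(G,\emptyset)\in\cR$.

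For the bottom of the recursion I would first settle the triangle-free case. Triangle-free members of $\cG$ are odd-signable and the class is again closed under induced subgraphs, hence under clique cutset decomposition, so by Corollary~\ref{cliquerecursion} it suffices to handle those with no clique cutset. Such a graph is either the cube, which I dispatch by brute force on its $8$ nodes, or, by Theorem~\ref{tf}, decomposes in polynomial time into a list of at most $|V|$ good fan-templates, each solvable in polynomial time by Lemma~\ref{lem2gen}; assembling the list right-to-left through the template machinery (Theorem~\ref{th:templ-stablesets}) solves the problem, and this extends to the rooted setting since a clique root has only $|Z|+1$ stable sets. This identifies a polynomially solvable base class $\cP\subseteq\cR$ consisting of the rooted graphs whose underlying graph is triangle-free or a near-clique (near-cliques have $\alpha\le 3$, so I just enumerate).

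The engine is Theorem~\ref{semiamalgamrecursion}: I claim a polynomial-time algorithm that, given $(G,Z)\in\cR\setminus\cP$ with no universal node, produces a proper $1$-linearized decomposition into $\cR$. Such a $G$ has a triangle and is not a near-clique, so by Theorem~\ref{capdec} it admits an amalgam or a clique cutset. A clique cutset is $0$-linearizable and an amalgam is $1$-linearizable by Lemma~\ref{amalgamisonelin}; in either case Lemma~\ref{1sepdec} gives a decomposition into two proper induced subgraphs of $G$ with clique roots, hence into $\cR$, where the root $Z$ is pushed into one side exactly as in the proof of Theorem~\ref{amalgamrecursion} so that both new roots stay cliques. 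The separations are found in polynomial time by the algorithms of Whitesides~\cite{WHI1} and Cornu\'ejols--Cunningham~\cite{CC}. Theorem~\ref{semiamalgamrecursion} then yields a polynomial algorithm for $\cR$, and in particular for $(G,\emptyset)$.

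The hard part will be the third outcome of Theorem~\ref{capdec}: a graph with a universal node but neither an amalgam nor a clique cutset. Deleting such a node $v$ is not a $1$-linearized decomposition and $G-v$ need not lie in $\cP$, so I can neither hide it in the base class nor recurse on it from scratch without risking exponential blow-up. I would instead enlarge the repertoire of list operations in Theorem~\ref{semiamalgamrecursion} by one \emph{universal-node step}: replace a rooted graph $(G,Z)$ having a universal node $v$ by the single strictly smaller rooted graph $(G-v,Z\setminus\{v\})$, recording the trivial alternative $\{v\}$ for reconstruction. With this step available, every rooted graph that is neither in $\cP$ nor reducible by a universal-node step has a triangle, is not a near-clique, and has no universal node, so Theorem~\ref{capdec} supplies a proper $1$-linearized decomposition as above. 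The point that needs checking is that the list stays polynomial: a universal-node step strictly decreases the load $\load{(G,Z)}$ (if $v\notin Z$) or the root size (if $v\in Z$), which are precisely the quantities bounding the list length in Lemmas~\ref{graphcount} and~\ref{quadratic}, so those counts go through unchanged. Verifying this accounting, together with the correctness of the reconstruction across universal-node steps, is the only delicate part; everything else is assembly of the cited results.
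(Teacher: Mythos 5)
Your proposal is correct and follows essentially the same route as the paper, which proves Theorem~\ref{thm2gen} by exactly this assembly: Theorem~\ref{capdec} feeding the $1$-linearized recursion machinery (Theorem~\ref{amalgamrecursion} via Theorem~\ref{semiamalgamrecursion}, together with Corollary~\ref{cliquerecursion}) over the triangle-free base handled by Theorems~\ref{tf} and~\ref{th:templ-stablesets}, Lemma~\ref{lem2gen}, and brute force on the cube. The ``hard part'' you isolate is dispatched in the paper by the one-line remark that a node adjacent to all other nodes can be set aside and compared with a maximum weight stable set of the rest; your explicit universal-node list operation with its load/root-size accounting is a sound elaboration of that remark (and a reasonable precaution in the rooted single-list setting, since deleting a universal node is not itself a node cutset decomposition in the paper's framework), not a departure from the paper's argument.
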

\begin{proof}
If node $u$ in graph $G$ is adjacent to all other nodes, then we can set it aside
to compare it with a maximum weight stable set in $G-u$, once we found that.
The stable set problem on the cube can be found by
enumeration.
So the result follows from 
Theorems \ref{capdec},
\ref{amalgamrecursion},
\ref{th:templ-stablesets},
\ref{tf},
Corollary \ref{cliquerecursion},
 and Lemma \ref{lem2gen}.
\end{proof}%
\section{Stable set polytopes}
We examine extended formulations for the stable set polytope of a graph
that admits certain decompositions into smaller graphs and combine
formulations for these smaller parts to one for the whole graph.
We apply this to cap-free odd-signable graphs and thus prove Theorem~2.
\\[.5\baselineskip]
{\bf Notation.}
We denote the convex hull of characteristic vectors of stable sets in
a graph $G$ by $\pol{G}$.
If $\cL$ is a collection of cliques in $G$,
we denote the collection of stable sets in $G$ that intersect
each member of $\cL$ by $\stable{G, \cL}$ and the convex hull
of the characteristic vectors of these stable sets by $\pol{G,\cL}$.
So $\pol{G}=\pol{G,\emptyset}$
and $\pol{G,\cL}$ is the face of $\pol{G}$
obtained by setting at equality all clique constraints
associated to the cliques in $\cL$.
If $x\in \R^V$ and $H=G_U$, we denote the restriction of $x$ to
$U$ by $x_H$, of by $x_U$. 
\subsection{Extended formulations and records}
Extra variables used in extended formulations here mostly come
from records.

Consider a set of nodes $U$ in a graph $G$.
Recall from Section~\ref{decomposition}, that $G(U)$ denotes the graph obtained
by adding to $G$ a clique $\record{U}$ (the record) consisting of new nodes
$\recordnode{S}$, one for each $S \in \stable{G_U}$,
and connect each such $\recordnode{S}$
to all nodes in $U\sm S$.
We denote by 
$\cL(U)$ the collection consisting of the clique 
$\record{U}$ together with all the cliques
$\{v\} \cup \{\recordnode{T}:v\not\in T\in \stable{G_U}\}$ with $v\in U$.

The following result says that any (extended) formulation for
$\pol{G(U), {\cL}(U)}$ is an extended formulation for $\pol{G}$.
\begin{lemma}\label{lift}
Let $U$ be a set of nodes in a graph $G$. Then each stable set $S$ in $G$ has a
unique extension to a member of $\stable{G(U),{\cL}(U)}$,
namely $S\cup\{\recordnode{S\cap U}\}$, and
$$\pol{G}= \{x_G:x\in \pol{G(U),{\cL}(U)}\}.$$
If, moreover, $\cL$ is a collection of cliques in $G$, then
$$\pol{G,{\cL}}
= \{x_G:x\in \pol{G(U), \cL\cup{\cL}(U)}\}.$$
\end{lemma}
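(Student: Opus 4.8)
The plan is to reduce both polytope identities to a single combinatorial bijection between $\stable{G}$ and $\stable{G(U),{\cL}(U)}$, and then to lift that bijection to the polytopes using only the elementary fact that a coordinate projection maps the convex hull of a finite point set onto the convex hull of the images. So first I would prove the set-level statement: the map $S\mapsto S\cup\{\recordnode{S\cap U}\}$ is a bijection from $\stable{G}$ onto $\stable{G(U),{\cL}(U)}$, with inverse the restriction $S^+\mapsto S^+\cap V$.

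For the forward direction I would take any $S^+\in\stable{G(U),{\cL}(U)}$ and read off what the cliques of $\cL(U)$ force. Because $\record{U}$ is a clique, $S^+$ contains exactly one record node, say $\recordnode{T}$ with $T\in\stable{G_U}$; and since $\recordnode{T}$ is adjacent to all of $U\sm T$ but to nothing outside $U$, stability gives $S^+\cap U\subseteq T$. The remaining cliques $\{v\}\cup\{\recordnode{T'}:v\notin T'\}$, one for each $v\in U$, can be met by $S^+$ only through $v$ itself or through its unique record node $\recordnode{T}$; requiring all of them to be met forces $T\subseteq S^+$, hence $S^+\cap U=T$. From this equality it follows that $S:=S^+\cap V$ is stable in $G$ and that $S^+=S\cup\{\recordnode{S\cap U}\}$, which gives both the existence and the uniqueness of the extension. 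The converse direction is a routine check: for $S\in\stable{G}$ with $T=S\cap U$, the set $S\cup\{\recordnode{T}\}$ is stable because $\recordnode{T}$ misses $U\sm T$, and it meets every clique of $\cL(U)$ (through $v$ when $v\in S$, and through $\recordnode{T}$ when $v\notin S$, since then $v\notin T$).

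With the bijection in hand I would finish through the linear projection $\pi:x\mapsto x_G$. As $\pi$ is linear it sends $\pol{G(U),{\cL}(U)}=\operatorname{conv}\{\chi^{S^+}\}$ onto $\operatorname{conv}\{\chi^{S^+\cap V}\}$, and by the bijection the sets $S^+\cap V$ range exactly over $\stable{G}$, so the image is $\pol{G}$. For the refined identity I would only observe that every clique in $\cL$ is contained in $V$ while record nodes lie outside $V$; hence $S\cup\{\recordnode{S\cap U}\}$ meets a clique $C\in\cL$ if and only if $S$ does, so the same bijection restricts to one between $\stable{G,\cL}$ and $\stable{G(U),\cL\cup{\cL}(U)}$, and the identical projection argument yields the second equality. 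The one delicate step is the forced equality $S^+\cap U=T$ in the forward direction; everything after it is bookkeeping about which cliques a record node is allowed to meet.
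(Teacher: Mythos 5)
Your proposal is correct and follows essentially the same route as the paper: the paper's proof also rests on the bijection $S\mapsto S\cup\{\recordnode{S\cap U}\}$ between $\stable{G}$ and $\stable{G(U),\cL(U)}$ (noting that meeting the cliques of $\cL$ is preserved) and then concludes via the fact that all four polytopes are convex hulls of the characteristic vectors of the corresponding stable sets. You merely spell out in full the forcing argument ($S^+$ contains exactly one record node $\recordnode{T}$, stability gives $S^+\cap U\subseteq T$, and the cliques $\{v\}\cup\{\recordnode{T'}:v\notin T'\}$ give $T\subseteq S^+$) that the paper dismisses as straightforward.
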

\begin{proof}
Proving that $S \cup \{\recordnode{S\cap U}\}$ is the only extension of $S$ in
$\stable{G(U),{\cL}(U)}$ is straightforward.
Moreover, if $S$ meets $\cL$, then so does $S \cup \{\recordnode{S\cap T}\}$.
The rest now follows as each of $\pol{G},\pol{G(U),{\cL}(U)}$,
$\pol{G,\cL}$, and $\pol{G(U), \cL\cup{\cL}(U)}$ are
convex hulls of stable sets.
\end{proof}
\subsection{Composing across node cutsets}\label{nodecutsystems}
Lemma~\ref{lift} expresses the stable set polytope of a graph $G$ 
as a particular face of the stable set polytope of the record graph of $U$ and $G$.
Our next result is that those particular faces
admit a simple composition rule when $U$ is a node cutset.
\begin{theorem}\label{composesystem}
Let $(V_1,U,V_2)$ be a node cutset separation of graph $G=(V,E)$.
Moreover, let $\cL_1$ be a collection of cliques in $G_1=G_{V_1\cup U}$ and
let $\cL_2$ be a collection of cliques in $G_2=G_{V_2\cup U}$. 
Then, each $x\in\R^{V\cup \record{U}}$ satisfies:
\begin{multline*}
x \in \pol{G(U),{\cL}_1\cup{\cL}_2\cup{\cL}(U)}
\mbox{ if and only if }\\
x_{G_1(U)}\in \pol{G_1(U),{\cL}_1\cup{\cL}(U)}
\mbox{ and }
x_{G_2(U)}\in \pol{G_2(U),{\cL}_2\cup{\cL}(U)}.
\end{multline*}
Hence
$$\pol{G} = \{x\in \R^V:\exists_{y\in\R^{\record{U}}}\,
[\,(x_{G_1},y)\in \pol{G_1(U),{\cL}(U)}
\mbox{ and }
(x_{G_2},y)\in \pol{G_2(U),{\cL}(U)}\,]\}.$$
\end{theorem}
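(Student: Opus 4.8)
The plan is to prove the biconditional first and then derive the projection formula as a corollary via Lemma~\ref{lift}. The key structural fact I want to exploit is that the record clique $\record{U}$ forces any stable set of $G(U)$ satisfying $\cL(U)$ to select exactly one record node $\recordnode{T}$, and that record node exactly encodes the trace on $U$: if $x$ is the characteristic vector of such a stable set $S^+$, then $S^+\cap U = T$ where $\recordnode{T}$ is the unique selected record node. This is precisely the uniqueness-of-extension statement established in Lemma~\ref{lift}. So the traces on $U$ of the two sides $G_1(U)$ and $G_2(U)$ are pinned down to be consistent automatically.

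First I would prove the ``only if'' direction, which is the easy one: if $x\in\pol{G(U),\cL_1\cup\cL_2\cup\cL(U)}$, then $x$ is a convex combination of characteristic vectors of stable sets in $G(U)$ meeting every clique in $\cL_1\cup\cL_2\cup\cL(U)$. Restricting each such vector to the coordinates of $G_1(U)$ (respectively $G_2(U)$) gives a stable set in $G_1(U)$ meeting $\cL_1\cup\cL(U)$ (respectively $G_2(U)$ meeting $\cL_2\cup\cL(U)$), since a clique of $G_i$ remains a clique in $G_i(U)$ and the restriction of a stable set is stable. Convexity is preserved under the linear restriction map $x\mapsto x_{G_i(U)}$, so both projected memberships hold.

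The ``if'' direction is where the real content lies, and I expect it to be the main obstacle. Here I have $x_{G_1(U)}$ and $x_{G_2(U)}$ lying in the respective polytopes and must reconstruct a single point of $\pol{G(U),\cL_1\cup\cL_2\cup\cL(U)}$ whose restrictions are exactly these. The crucial leverage is that both polytopes share the record coordinates $\record{U}$, and by the $\cL(U)$ constraints the record coordinates determine a distribution over traces $T\in\stable{G_U}$ on $U$. The standard technique is to write each $x_{G_i(U)}$ as a convex combination of vertices, \emph{group the vertices by which record node $\recordnode{T}$ they select}, and observe that the induced distribution on $T$ agrees for $i=1$ and $i=2$ because both equal the common record coordinates of $x$. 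Then, for each fixed $T$, I can independently glue a $V_1$-side stable set (trace $T$ on $U$) to a $V_2$-side stable set (same trace $T$) along $U$; since $V_1$ and $V_2$ are nonadjacent across the cutset $U$, the union is stable in $G$, and adding back $\recordnode{T}$ keeps it stable in $G(U)$ and meeting $\cL(U)$. Taking the product convex combination within each $T$-class and then combining over $T$ with the shared weights yields the desired global point. The delicate bookkeeping is matching the marginal weights per trace on the two sides, which is exactly the role the shared record coordinates play.

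Finally, the displayed projection formula follows by specializing $\cL_1=\cL_2=\varnothing$ in the biconditional and applying Lemma~\ref{lift} to pass from $\pol{G(U),\cL(U)}$ back to $\pol{G}$; the existential quantifier over $y\in\R^{\record{U}}$ is just the projection forgetting the record coordinates, and the biconditional guarantees that a point lifts to $G(U)$ exactly when its two restrictions are simultaneously feasible. I would present the trace-matching gluing argument as the heart of the proof and treat the ``only if'' direction and the corollary as routine.
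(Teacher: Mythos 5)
Your proposal is correct and matches the paper's proof in essence: both directions hinge on the fact that the cliques in $\cL(U)$ force every relevant stable set to pick exactly one record node $\recordnode{T}$ encoding its trace on $U$, so the two sides' trace distributions agree (both equal the shared record coordinates), and stable sets can then be glued per trace across the nonadjacent pair $V_1,V_2$, with the projection formula following from Lemma~\ref{lift}. The only difference is bookkeeping: the paper reduces to rational $x$, scales by a common denominator $n$, and renumbers the two lists of $n$ stable sets so that traces match, whereas you couple the two convex combinations directly via product weights within each trace class --- the same coupling realized probabilistically rather than combinatorially.
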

\begin{proof}
By Lemma~\ref{lift}, the second assertion follows from the first one.
The ``only if'' direction of the first assertion is obvious.
For the ``if'' direction, it suffices to consider 
$x\in \Q^{V\cup\record{U}}$.

Assume
$x_{G_1(U)}\in \pol{G_1(U),{\cL}_1\cup{\cL}(U)}$
and
$x_{G_2(U)}\in \pol{G_2(U),{\cL}_2\cup{\cL}(U)}$.
Then, for $i=1,2$, there exists a positive integer $n_i$,
so that $n_ix_{G_i}(U)$
is the sum of the characteristic vectors of a collection of
(not necessarily distinct) stable sets $S^i_1,\ldots,S^i_{n_i}$ in $G_i(U)$.
By replicating members in these two collections of stable sets (if necessary),
we may assume that $n_1=n_2$; let $n=n_1=n_2$.

Since $x_{G_i(U)}\in \pol{G_i(U),{\cL}(U)}$,
each 
$S^i_1,\ldots,S^i_n$ meets each clique in ${\cL}(U)$ exactly once.
So, for each stable set $S$ in $U$,
the number of sets among $S^i_1,\ldots,S^i_n$ that 
intersect $U$ in $S$ is equal to
$n(x_{G_i(U)})_{{\recordnode{S}}}=
nx_{{\recordnode{S}}}$.
As this applies to both $i=1,2$, we can renumber $S^2_1,\ldots,S^2_n$ so that
$S^1_j\cap U=S^2_j\cap U$ for $j=1,\ldots,n$.
Doing so,
 $x$ is a convex combination of the characteristic vectors
of the stable sets $S^1_1\cup S^2_1, \ldots, S^1_n\cup S^2_n$.
Since
$S^1_j\in \pol{G_1(U),{\cL}_1\cup {\cL}(U)}$
and
$S^2_j\in \pol{G_2(U),{\cL}_2\cup {\cL}(U)}$
for all $j$, each $S^1_j\cup S^2_j$ is in $\stable{G(U),{\cL}_1\cup{\cL}_2\cup{\cL}(U)}$.  Hence, $x\in \pol{G(U),{\cL}_1\cup{\cL}_2\cup{\cL}(U)}$, as claimed.
\end{proof}
\noindent
Consider Theorem~\ref{composesystem} in case $U$ is a clique cutset.
Then
$\record{U}=\{\recordnode{\varnothing}\}\cup \{\recordnode{\{v\}}:v\in U\}$.
Moreover, for $(x,y)\in \R^V\times \R^{\record{U}}$, we have that
$(x,y)\in \pol{G(U),\cL\cup{\cL}(U)}$
if and only if:
\begin{equation}\label{eq:clique-transf}
x_G\in \pol{G,\cL},\ \ 
 y_{{\recordnode{\varnothing}}}=1-\sum_{v\in U}x_v,
\mbox{\ \  and\ \  }
y_{{\recordnode{\{v\}}}}= x_v\, (v\in U).
\end{equation}
Applying this to each of the three graphs $G, G_1, G_2$ in
Theorem~\ref{composesystem}, we obtain the following result of Chv\'atal.
\begin{corollary}[Chv\'atal \cite{CHV}]\label{cliquecut}
Let $(V_1,U,V_2)$ be a clique cutset separation of a graph $G=(V,E)$ and
let $G_1=G_{V_1\cup U}$ and $G_2=G_{V_2\cup U}$.
Then:
$$\pol{G} = \{x\in \R^V:
x_{G_1}\in \pol{G_1} \mbox{ and } x_{G_2}\in \pol{G_2}\}.$$
If, moreover, ${\cL}_1$ is a collection of cliques in $G_1$
and ${\cL}_2$ is a collection of cliques in $G_2$,
then each $x\in \R^V$ satisfies:
$$x\in \pol{G,{\cL}_1\cup {\cL}_2}
\mbox{ if and only if }
x_{G_1}\in \pol{G_1,{\cL}_1}
\mbox{ and }
x_{G_2}\in \pol{G_2,{\cL}_2}.$$
\end{corollary}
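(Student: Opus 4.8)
The plan is to obtain both statements as direct specializations of Theorem~\ref{composesystem}, exploiting the transformation~(\ref{eq:clique-transf}). The whole point is that when $U$ is a clique the record carries no real information: $\stable{G_U}$ consists only of $\varnothing$ and the singletons $\{v\}$ with $v\in U$, so $\record{U}=\{\recordnode{\varnothing}\}\cup\{\recordnode{\{v\}}:v\in U\}$ and, by Lemma~\ref{lift}, the unique lift of a stable set $S$ to $\stable{G(U),\cL(U)}$ adds the single record node $\recordnode{S\cap U}$. Reading off the coordinates of that lift gives exactly~(\ref{eq:clique-transf}): the record variable $y_{\recordnode{\{v\}}}$ equals $x_v$ and $y_{\recordnode{\varnothing}}$ equals $1-\sum_{v\in U}x_v$. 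Since these are affine in $x_U$, they survive convex combinations, so I would first record~(\ref{eq:clique-transf}) as the basic dictionary between $\pol{G(U),\cL\cup\cL(U)}$ and $\pol{G,\cL}$, and likewise with $G_1$ and $G_2$ in place of $G$.

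With that dictionary in hand I would prove the clique-collection statement first, since it is the sharper one and the polytope identity falls out as the case $\cL_1=\cL_2=\varnothing$. Apply the first assertion of Theorem~\ref{composesystem} with the given $\cL_1,\cL_2$ (which are cliques of $G$ as well): $x\in\pol{G(U),\cL_1\cup\cL_2\cup\cL(U)}$ iff $x_{G_1(U)}\in\pol{G_1(U),\cL_1\cup\cL(U)}$ and $x_{G_2(U)}\in\pol{G_2(U),\cL_2\cup\cL(U)}$. Now translate each of the three memberships through~(\ref{eq:clique-transf}) applied to $G$, $G_1$, $G_2$ respectively: the left side becomes $x_G\in\pol{G,\cL_1\cup\cL_2}$, while the two right-hand conditions become $x_{G_1}\in\pol{G_1,\cL_1}$ and $x_{G_2}\in\pol{G_2,\cL_2}$. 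The one point to check is that the record coordinates $y$ imposed by~(\ref{eq:clique-transf}) agree in all three instances, because in each case they are the \emph{same} affine function of $x_U$; hence the $y$ forced by $G_1$ and the $y$ forced by $G_2$ do not clash, and the equivalence descends cleanly to the $x$-description. This gives the second displayed equivalence of the corollary.

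For the first (polytope) statement I would either specialize the above to $\cL_1=\cL_2=\varnothing$, or invoke the second assertion of Theorem~\ref{composesystem} directly: $\pol{G}$ is the projection onto $\R^V$ of the pairs $(x,y)$ with $(x_{G_1},y)\in\pol{G_1(U),\cL(U)}$ and $(x_{G_2},y)\in\pol{G_2(U),\cL(U)}$. Here the existential quantifier over $y\in\R^{\record{U}}$ collapses: by~(\ref{eq:clique-transf}) each of the two conditions forces $y$ to equal the one vector determined by $x_U$, so a witnessing $y$ exists precisely when $x_{G_1}\in\pol{G_1}$ and $x_{G_2}\in\pol{G_2}$, which is the claim.

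The only real content beyond Theorem~\ref{composesystem} is the verification of~(\ref{eq:clique-transf}) and the remark that it forces the same $y$ on both sides of the cutset; I expect this bookkeeping, rather than any genuine polyhedral difficulty, to be the single step needing care. As a sanity check one could also supply the classical direct argument: the forward inclusions are immediate by restricting stable sets of $G$ to $V_1\cup U$ and to $V_2\cup U$, and for the reverse one writes $x_{G_1}$ and $x_{G_2}$ over a common denominator as averages of stable sets, uses that a clique meets each stable set in at most one node to match the two families so that $S^1_j\cap U=S^2_j\cap U$, and glues each matched pair into a stable set $S^1_j\cup S^2_j$ of $G$, which is legitimate because $V_1$ and $V_2$ are nonadjacent.
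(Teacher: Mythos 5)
Your proposal is correct and follows essentially the same route as the paper, which likewise derives the corollary by specializing Theorem~\ref{composesystem} to a clique cutset and applying the affine dictionary~(\ref{eq:clique-transf}) to each of $G$, $G_1$, $G_2$; your observation that the record variables are the \emph{same} affine function of $x_U$ on both sides is exactly the (implicit) bookkeeping the paper relies on. The extra direct gluing argument you sketch as a sanity check is just the proof of Theorem~\ref{composesystem} itself, so nothing genuinely new is needed.
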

\noindent
In Corollary~\ref{cliquecut}, we can not drop the condition that $U$ is a clique.
Indeed, let $u$ and $v$ be two nonadjacent nodes in $U$
and suppose $G_1$ has a chordless even $uv$-path $Q_1$
and $G_2$ has a chordless odd $uv$-path $Q_2$.
Consider the vector $x\in\R^V$ with $x_v=1/2$ if $v$ lies on $Q_1\cup Q_2$ and $x_v=0$ otherwise.
Then $x\not\in \pol{G}$, but $x_{G_1}\in \pol{G_1}$ and $x_{G_2}\in \pol{G_2}$.
\\[.5\baselineskip]
Balas~\cite{B} has shown how to obtain an extended formulation for the convex hull
of polytopes $P_1$, \ldots, $P_k$, whose size is approximately the sum of the sizes
of the descriptions for these polytopes.
If $A^ix+B^iy\le d^i$ is an extended formulation for $P_i (i=1,\ldots,k)$,
then Balas's formulation for the convex hull reads:
\begin{eqnarray}\label{balas}
x=x^1+\cdots +x^k,\ \lambda_1+\cdots+\lambda_k=1;\ \ 
A^ix^i+B^iy^i-\lambda_id^i\le 0,\ \lambda_i\ge 0\ (i=1,\ldots,k).
\end{eqnarray}
This formula can be used to construct a formulation for $\pol{G}$
from such formulations for parts of a node cutset decomposition of $G$.
Let $H$ be one of these parts and let $U$ denote the node cutset.
For every stable set $S$ in $U$, a
description of the face of $\pol{H(U)}$ given by $x_{\recordnode {S}}=1$
can be inferred from any linear description of
the face $\{x\in \pol{H}:x_v=1\, (v\in S)\}$ of $\pol{H}$.
Since $\pol{H(U), \cL(U)}$ is the convex hull of these faces, Balas's formula~(\ref{balas})
gives an extended formulation for $\pol{H(U), \cL(U)}$
whose size is in the order of $|\record U|=|\stable{H_U}|$ times the size of the linear description of $\pol{H}$.
If we apply this to each part $H$ of the decomposition and
combine the resulting formulations into one list of linear inequalities, we obtain,
by Theorem~\ref{composesystem}, an extended formulation for $\pol{G}$.
This leads to the following result.
\begin{theorem}\label{th:templ-extform}
Let $G$ be a graph and
$\{(G_1,Z_1,\Omega_1), \ldots, (G_k,$\ $Z_k,\Omega_k)\}$
be a decomposition-list of $(G,\varnothing,\varnothing)$.
Assume we are given for each $i=1,\ldots,k$ an extended formulation with size $m_i$
for $\pol{G_i({\{Z_i\}\cup\, \Omega_i})}$.
Then there exists an extended formulation for $\pol{G}$ with size at most $O(k)+m_1+\cdots+m_k$.
\end{theorem}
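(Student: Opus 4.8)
The plan is to proceed by induction on the length $k$ of the decomposition list, using Theorem~\ref{composesystem} as the composition engine and Balas's formula~(\ref{balas}) as the size-bookkeeping device. The base case $k=1$ is immediate: the list is just $(G,\varnothing,\varnothing)$ itself, so $G_1=G$, $Z_1=\varnothing$, $\Omega_1=\varnothing$, and the given formulation for $\pol{G_1(\{Z_1\}\cup\Omega_1)}=\pol{G(\varnothing)}$ of size $m_1$ already is (after projecting away the single record node $\recordnode{\varnothing}$, which is trivial) an extended formulation for $\pol{G}$ by Lemma~\ref{lift}. So the work is all in the inductive step.

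For the inductive step I would isolate the first master/servant decomposition that produced the list. By definition of a decomposition list, $(G,\varnothing,\varnothing)$ was decomposed along some node cutset separation $(V_1,U,V_2)$ with grouping into a master template $(G_{V_1\cup U},Z,\Omega')$ and a servant template inserted somewhere to its right; each of these in turn has its own sub-decomposition list, and concatenating those two lists reconstitutes the given list. First I would apply the induction hypothesis to the master's sublist and to the servant's sublist separately, obtaining extended formulations for $\pol{G_{V_1\cup U}(\{Z\}\cup\Omega')}$ and for the servant polytope, of sizes bounded by the sum of the relevant $m_i$ plus $O(\text{sublist length})$. The key technical move, as sketched in the paragraph preceding the theorem, is that for each part $H$ with node cutset $U$, one converts a formulation for $\pol{H}$ into one for $\pol{H(U),\cL(U)}$: for every stable set $S$ in $U$ one takes the face $\{x\in\pol{H}:x_v=1\,(v\in S)\}$, reads off its description, and then glues these $|\stable{H_U}|$ faces together by Balas~(\ref{balas}) to get a formulation for the convex hull $\pol{H(U),\cL(U)}$ whose size is $|\record U|$ times that of the $\pol{H}$ description plus the $O(1)$ overhead of~(\ref{balas}). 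Finally I would invoke Theorem~\ref{composesystem} to assert that the conjunction of the master-side formulation and the servant-side formulation in the shared record variables $y\in\R^{\record U}$ is an extended formulation for $\pol{G}$, the projection onto $x\in\R^V$ eliminating the $y$'s.

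The size accounting is where I would be most careful rather than where the real difficulty lies. Each application of Balas's formula contributes only an additive $O(1)$ per part (new variables $x^i,y^i,\lambda_i$ and a constant number of coupling equations), and the composition in Theorem~\ref{composesystem} contributes nothing beyond concatenating inequality lists and identifying the shared $y$-block. Summed over the at-most-$k$ parts this yields the claimed $O(k)+m_1+\cdots+m_k$; I would note that the factor $|\record U|$ flagged in the preceding paragraph is already absorbed into the given input sizes $m_i$, since $m_i$ is by hypothesis the size of a formulation for the \emph{record graph} $\pol{G_i(\{Z_i\}\cup\Omega_i)}$ rather than for $\pol{G_i}$ itself — this is precisely why the theorem is stated in terms of record-graph formulations.

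The main obstacle I anticipate is bookkeeping the roots $Z_i$ and regions $\Omega_i$ correctly through the recursion so that Theorem~\ref{composesystem} applies with the right clique collections. Theorem~\ref{composesystem} is stated for a plain graph $G$ with a node cutset and two clique collections $\cL_1,\cL_2$; to use it at a general node of the decomposition tree I must take $\cL_1,\cL_2$ to encode not only the servant-root clique (which is a clique by the $1$-linearizability analysis of Section~\ref{cliqueroot}, or a singleton grouping in the template setting) but also all the record-cliques $\cL(W)$ for the regions $W\in\Omega_i$ and roots carried by the two sides. The subtlety — the same one handled in the proof that the servant for a template is again a template — is that a region meeting $V_2$ must be replaced by its intersection with $V_2\cup\lazy\cU$ before it becomes a genuine region of the servant graph, and I would need to check that this replacement is compatible with the record construction $G_i(\cdot)$ and leaves the face description unchanged, exactly as in~(\ref{tempservant}). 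Once that correspondence between the template data and the clique collections fed into Theorem~\ref{composesystem} is pinned down, the induction closes cleanly.
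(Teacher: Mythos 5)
Your proposal is correct and follows essentially the same route as the paper, whose entire proof consists of recursively applying Theorem~\ref{composesystem} to the faces $\pol{G_i(\{Z_i\}\cup\Omega_i),\cL(U)}$ with the record variables $y\in\R^{\record{U}}$ shared between master and servant --- precisely the strengthened induction tracking roots, regions, and clique collections that you describe in your final paragraph. Your Balas conversion step is superfluous in the proof itself (in the paper it belongs to the discussion \emph{preceding} the theorem, explaining how the input formulations are obtained): since the hypothesis already supplies formulations for the record-graph polytopes $\pol{G_i(\{Z_i\}\cup\Omega_i)}$, the faces needed by Theorem~\ref{composesystem} are cut out simply by adding the clique equations $x(K)=1$ for $K\in\cL(U)$, which is covered by the $O(k)$ term.
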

\begin{proof}
Recursively apply the following immediate corollary of Theorem~\ref{composesystem}:
if $(V_1,U,V_2)$ is a cutset separation of template $(G,Z,\Omega)$ with
master template $(G_1,Z,\Omega_1)$ and servant template $(G_2,U,\Omega_2)$,
then a vector $x$ lies in
$\pol{G(\{Z,U\}\cup\, \Omega),{\cL}(U))}$
if and only if
$x_{G_1(\{Z, U\}\cup \Omega_1)}
\in \pol{G_1(\{Z, U\}\cup \Omega_1),
{\cL}(U)}$
and 
$x_{G_2(\{U\}\cup \Omega_2)}
\in \pol{G_2(\{U\}\cup \Omega_2),
{\cL}(U)}$.
\end{proof}
\noindent
An alternative for adding a record to a graph $G$ is
{\em lifting a node set $U$ to a clique}.
This amounts to deleting $U$ from $G$ and replacing it by a clique
with node set $\lift U$,
and connecting each $\liftnode S\in \lift U$ with
each node in $\neighbor{G}{S}\sm U$.
We call the new graph the {\em clique lift} of $U$ from $G$.
An advantage of clique lifts over records is that clique lifts
yield extended formulations for stable sets that do not
involve ``tight clique constraints'':
$x(K)=1\, (K\in \cL(U))$.
\begin{lemma}\label{cliquelift}
Let $G^+$ be the clique lift of $U\subseteq V$ from a graph $G=(V,E)$.
Then the stable set polytope $\pol{G}$
is the image of $\pol{G^+}$ under the projection
$p:\R^{(V\sm U)\cup(\lift{U})}\rightarrow \R^V$ defined by
\begin{displaymath}
  p_v(x) =\begin{cases}
     \sum_{S\in \stable{G_U},  S\ni v} x_{\powernode S}&\text{if  } v\in U\\
      x_v &\text{otherwise}\,.
    \end{cases}
\end{displaymath}
\end{lemma}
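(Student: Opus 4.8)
The plan is to prove the identity vertex-by-vertex and then invoke linearity of $p$. Both $\pol{G}$ and $\pol{G^+}$ are convex hulls of characteristic vectors of stable sets, so since $p$ is linear it suffices to show that $p$ maps the vertex set $\{\chi^T : T\in\stable{G^+}\}$ of $\pol{G^+}$ exactly onto the vertex set $\{\chi^S : S\in\stable{G}\}$ of $\pol{G}$; then $p(\pol{G^+})=p(\mathrm{conv}\{\chi^T\})=\mathrm{conv}\{p(\chi^T)\}=\pol{G}$.

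First I would describe the stable sets of $G^+$. Because $\lift{U}$ is a clique, any stable set $T$ of $G^+$ contains at most one record node, so $T$ is either $T_0$ or $T_0\cup\{\liftnode{S}\}$, where $T_0=T\sm\lift{U}\subseteq V\sm U$ is stable in $G_{V\sm U}$ and, in the second case, $S$ is a nonempty stable set of $G_U$. The defining adjacency of the clique lift — namely that $\liftnode{S}$ is adjacent exactly to $\neighbor{G}{S}\sm U$ — shows that $T_0\cup\{\liftnode{S}\}$ is stable in $G^+$ if and only if $T_0$ is stable in $G_{V\sm U}$ and $T_0\cap\neighbor{G}{S}=\emptyset$ (the last using $T_0\subseteq V\sm U$).

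Next I would compute the image. By the definition of $p$, for $T=T_0\cup\{\liftnode{S}\}$ the coordinate $p_v(\chi^T)$ equals $1$ exactly when $v\in S$ (for $v\in U$) or $v\in T_0$ (for $v\notin U$), and $0$ otherwise; that is, $p(\chi^T)=\chi^{S\cup T_0}$, while $p(\chi^{T_0})=\chi^{T_0}$ when no record node is present (the case $S=\emptyset$). The adjacency condition above is precisely the statement that $S\cup T_0$ is a stable set of $G$: $S$ is stable in $G_U$, $T_0$ is stable in $G_{V\sm U}$, and $T_0\cap\neighbor{G}{S}=\emptyset$ says there is no edge of $G$ between $S$ and $T_0$. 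Hence $p$ carries every vertex of $\pol{G^+}$ to a vertex of $\pol{G}$. For the reverse inclusion I would start from an arbitrary stable set $S^*$ of $G$ and recover a preimage by setting $S=S^*\cap U$ and $T_0=S^*\sm U$: then $S\in\stable{G_U}$, $T_0$ is stable in $G_{V\sm U}$, and stability of $S^*$ gives $T_0\cap\neighbor{G}{S}=\emptyset$, so $T_0$ (if $S=\emptyset$) or $T_0\cup\{\liftnode{S}\}$ (if $S\neq\emptyset$) is a stable set of $G^+$ mapped by $p$ to $\chi^{S^*}$.

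This establishes the required surjectivity onto the vertices of $\pol{G}$ and finishes the argument. The only point that needs care is the adjacency bookkeeping for the clique lift — converting ``$\liftnode{S}$ is adjacent to $\neighbor{G}{S}\sm U$'' into the equivalence between stability in $G^+$ and the absence of $S$--$T_0$ edges in $G$ — together with the handling of the excluded record node $\recordnode{\emptyset}$, which is exactly why the case $S=\emptyset$ must be routed through the no-record-node stable sets of $G^+$.
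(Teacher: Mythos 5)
Your proof is correct. The paper states Lemma~\ref{cliquelift} without proof, treating it as immediate, and your argument---the correspondence $T_0\cup\{\liftnode{S}\}\leftrightarrow S\cup T_0$ between stable sets of $G^+$ and of $G$ at the level of characteristic vectors, combined with linearity of $p$ and convex-hull preservation, including the correct routing of the $S=\emptyset$ case through stable sets of $G^+$ that avoid $\lift{U}$ (since $\recordnode{\emptyset}$ is excluded from the clique lift)---is precisely the routine verification the authors leave implicit.
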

Lifting a node cutset to a clique turns it into a clique cutset.
So we get the following consequence of Corollary~\ref{cliquecut}.
\begin{corollary}
Let $(V_1,U,V_2)$ be a node cutset separation of graph $G=(V,E)$. Moreover, let
$G^+, G^+_1,$ and $G^+_2$ be the clique lifts of $U$
from $G, G_{V_1\cup U}$, respectively $G_{V_2\cup U}$.
Then each $x\in \R^{(V\sm U)\cup(\lift U)}$ satisfies:
$$ x \in \pol{G^+} \mbox{ if and only if }
x_{G^+_1}\in \pol{G^+_1}
\mbox{ and }
x_{G^+_2}\in \pol{G^+_2}.$$
\end{corollary}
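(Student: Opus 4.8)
The plan is to reduce the statement to the clique-cutset composition of Corollary~\ref{cliquecut}, exploiting the remark (made just before the statement) that lifting a node cutset to a clique turns it into a clique cutset. The first thing I would record is the structural observation that makes the three lifts compatible: since $(G_{V_1\cup U})_U=G_U=(G_{V_2\cup U})_U$, the graphs $G^+$, $G^+_1$ and $G^+_2$ are all built over the \emph{same} clique $\lift U$, with one new node $\liftnode S$ per nonempty $S\in\stable{G_U}$. So at the level of vertex sets we have $(V\sm U)\cup\lift U$ for $G^+$, and $V_1\cup\lift U$ and $V_2\cup\lift U$ for $G^+_1$ and $G^+_2$.

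The heart of the argument is to upgrade these vertex-set identities to identities of induced subgraphs, i.e.\ to check that the adjacencies also match. Inside $G^+$ each lift-node $\liftnode S$ is joined exactly to $\neighbor{G}{S}\sm U$, and since $\{V_1,V_2\}$ partitions $V\sm U$, this set is the disjoint union $(\neighbor{G}{S}\cap V_1)\cup(\neighbor{G}{S}\cap V_2)$. Restricting $G^+$ to $V_1\cup\lift U$ therefore leaves $\liftnode S$ adjacent to exactly $\neighbor{G}{S}\cap V_1=\neighbor{G_{V_1\cup U}}{S}\sm U$, which is precisely the neighbourhood prescribed for $\liftnode S$ in the clique lift $G^+_1$ of $U$ from $G_{V_1\cup U}$; symmetrically for $\liftnode S$ and $V_2$. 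The sets $V_1,V_2$ and the edges among them are untouched by the lift. Hence $G^+_1=(G^+)_{V_1\cup\lift U}$ and $G^+_2=(G^+)_{V_2\cup\lift U}$ as induced subgraphs.

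It then remains to see that $(V_1,\lift U,V_2)$ is a clique-cutset separation of $G^+$. The set $\lift U$ is a clique by the definition of clique lift. The sides $V_1,V_2$ are nonempty and were nonadjacent in $G$ because $U$ is a node cutset separating them; no lift-node lies between them, so $V_1$ and $V_2$ remain nonadjacent in $G^+$, and $\lift U$ separates them. Applying Corollary~\ref{cliquecut} to $G^+$ with clique cutset $\lift U$ and the two parts $(G^+)_{V_1\cup\lift U}=G^+_1$ and $(G^+)_{V_2\cup\lift U}=G^+_2$ identified above yields exactly $x\in\pol{G^+}$ if and only if $x_{G^+_1}\in\pol{G^+_1}$ and $x_{G^+_2}\in\pol{G^+_2}$, as claimed. (If one prefers, the same identity can be obtained by combining Lemma~\ref{cliquelift} with Theorem~\ref{composesystem}, but routing through the already-lifted graphs and Corollary~\ref{cliquecut} is cleaner.)

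The only step that needs genuine care is the induced-subgraph bookkeeping of the middle paragraph: one must confirm that the neighbourhoods assigned to the lift-nodes when $U$ is lifted \emph{independently} inside $G_{V_1\cup U}$ and $G_{V_2\cup U}$ reassemble to exactly the neighbourhoods those nodes receive when $U$ is lifted inside all of $G$. This is where the node-cutset hypothesis (no edges between $V_1$ and $V_2$, so that $\neighbor{G}{S}\sm U$ splits cleanly along the partition) is used. Everything else---that $\lift U$ is a clique and that it separates the two sides---is immediate, and the polytope equivalence is then a direct citation of Corollary~\ref{cliquecut}.
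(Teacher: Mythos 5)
Your proposal is correct and follows exactly the route the paper intends: the paper derives this corollary immediately from Corollary~\ref{cliquecut} via the one-line observation that lifting the node cutset $U$ to the clique $\lift{U}$ turns $(V_1,U,V_2)$ into a clique cutset separation of $G^+$ with parts $G^+_1$ and $G^+_2$. Your middle paragraph merely makes explicit the induced-subgraph bookkeeping (that $(G^+)_{V_1\cup\lift{U}}=G^+_1$ and $(G^+)_{V_2\cup\lift{U}}=G^+_2$, using that $\neighbor{G}{S}\sm U$ splits along the nonadjacent sides $V_1,V_2$) which the paper leaves implicit, so there is nothing to correct.
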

\subsection{Generalized amalgams}
We give a decomposition rule for stable set polytopes of graphs 
that admit a {\em generalized amalgam separation};
for a graph with node set $V$, this
is a pair $(U, \cW)$ with $U\subseteq V$
such that $\cW$ is a partition of $V\setminus U$
into nonempty sets $W$ that each have the property that
each node in $W$ with a neighbor outside $W\cup U$ is fully adjacent to $U$.

Generalized amalgam separation unifies a great variety of known
separations. Clique cutset separation and amalgam separation are obvious
special cases.
A notable other example is the ``strip-structure for trigraphs''
introduced by Chudnovsky and Seymour~\cite{CS}.
Faenza, Oriolo, and Stauffer~\cite{FOG} used strip-structures to obtain
extended formulations and polynomial-time algorithms for stable sets problems
in ``claw-free'' graphs. The ``2-clique-bonds'' that
Galluccio, Gentile, and Ventura~\cite{GGV} use to compose linear formulations 
of stable set problems are generalized amalgam separations as well.
\\[.5\baselineskip]
Before actually decomposing a graph along a generalized amalgam
separation $(U, \cW)$ we first lift $U$ to a clique,
$K$ (say). 
Then $(K, \cW)$ is a generalized amalgam separation of the clique lift
and
all structure of $(U, \cW)$ and the original graph
fully carries over to $(K, \cW)$ and the clique lift, 
except for the internal structure of $U$ resp. $K$.
Since Lemma~\ref{cliquelift} explains the effect of clique lifts to
the stable set polytope,
it is enough to investigate $(K,\cW)$ in the
clique lift;  we call the clique lift $G$.
\\[.5\baselineskip]
So $(K, \cW)$ is a generalized amalgam separation of a graph $G$ and $K$ is
a clique.

For $W\in \cW$, we denote by $\gaA{W}$
the collection of equivalence classes in $\buur{G-K}{W}$ of the relation
``having the same neighbors outside $W$''. Related to $\gaA{W}$
we will consider a clique $\power{\gaA{W}}$ consisting of
new nodes $\recordnode{\cX}$,
one for each subcollection $\cX$ of ${\gaA{W}}$.

The {\em generalized amalgam decomposition of $G$ along $(K, \cW)$}
consists of a collection of graphs $G(K,W)$, one for each $W\in \cW$,
together with a ``connecting'' graph $\connect G(K,\cW)$.
Each graph $G(K,W)$ is obtained from the disjoint union of
$G_{K\cup W}$ and $\power{\gaA{W}}$ by connecting each
$\recordnode{\cX}\in \power{\gaA{W}}$ to all nodes in 
$\buur{G-K}{W}\sm \bigcup \cX$ and to all nodes in $K$.
The graph $\connect G(K,\cW)$ is obtained from the disjoint union of
the clique $K$ and all cliques $\power{\gaA{W}}$ with $W\in \cW$,
by adding edges from each node in $K$ to all nodes in
all cliques $\power{\gaA{W}}$ and by adding
all edges $\recordnode{\cX}\recordnode{\cX'}$ 
such that
$\cup \cX$ and $\cup \cX'$ are adjacent in $G$ and
$\cX\subseteq \gaA{W}, \cX'\subseteq \cA_{W'}$,
$W,W'\in \cW, W\neq W'$.
We also define $\cL(K,\cW)=
\{K\cup \power{\gaA{W}}:W\in \cW\}$.
\begin{theorem}\label{graph_composition_extendended_formulation}
Let $(K,\cW)$ be a generalized amalgam separation of a graph $G=(V,E)$ such
that $K$ is a clique.
Moreover, let $\cL= \cL(K,\cW)$.
Then $\pol{G}$ consists of the restrictions $x_G$ of those 
$x\in \R^{V\cup (\bigcup \{\power{\gaA{W}}:W\in \cW\})}$ with
\begin{equation}\label{extamalform}
x_{\connect{G}(K, {\cW})}\in \pol{\connect{G}(K, \cW),\cL}
\mbox{ and }
x_{G(K,W)}\in \pol{G(K,W)}
\mbox{ for all } W\in \cW.
\end{equation}
\end{theorem}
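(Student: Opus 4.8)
The approach is to generalise the two-block gluing of Theorem~\ref{composesystem} to the ``star'' in which $\connect{G}(K,\cW)$ is a hub recording the interface behaviour of the blocks $G(K,W)$, each block being glued to the hub along the shared clique $K\cup\power{\gaA{W}}$. The structural fact I would isolate first is that, for $W\ne W'$, two classes $\cY\in\gaA{W}$ and $\cY'\in\gaA{W'}$ are either fully adjacent or nonadjacent in $G$: since the members of a class have a common neighbourhood outside their block, a single edge between $\bigcup\cY$ and $\bigcup\cY'$ already forces full adjacency. This is what makes the record nodes $\recordnode{\cX}$ an adequate summary of a block and is used in both inclusions.

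For the inclusion $\pol{G}\subseteq\{x_G:\ldots\}$ I would extend each $S\in\stable{G}$ and take convex combinations. If $S\cap K=\{k\}$, I put $k$ into the hub and into every block, together with $S\cap W$ in $G(K,W)$; since boundary nodes are fully adjacent to $K$, $S$ meets no $\buur{G-K}{W}$, so this is stable and meets every clique of $\cL$ at $k$. If $S\cap K=\varnothing$, I let $\cX_W$ be the collection of classes of $\gaA{W}$ met by $S$ and choose $\recordnode{\cX_W}$ in the hub and in $G(K,W)$. The set $\{\recordnode{\cX_W}:W\in\cW\}$ is stable in $\connect{G}(K,\cW)$: were $\recordnode{\cX_W}\recordnode{\cX_{W'}}$ an edge, then $\bigcup\cX_W$ and $\bigcup\cX_{W'}$ would be adjacent, hence by the structural fact some met class of $W$ would be fully adjacent to some met class of $W'$, producing two adjacent vertices of $S$. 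Each block set $(S\cap W)\cup\{\recordnode{\cX_W}\}$ is stable as well, so the extension lies in the right faces and restricts to $\chi^{S}$; convexity finishes the direction.

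For the reverse inclusion I would rationalise $x$ and argue as in Theorem~\ref{composesystem}. The key preliminary observation is that $x_{\connect{G}(K,\cW)}\in\pol{\connect{G}(K,\cW),\cL}$ makes every clique $K\cup\power{\gaA{W}}$ tight; as these coordinates are shared with $G(K,W)$, the same clique is tight on $x_{G(K,W)}$, so in fact $x_{G(K,W)}\in\pol{G(K,W),\{K\cup\power{\gaA{W}}\}}$. Hence, for a common denominator $n$, I can write $x_{\connect{G}(K,\cW)}$ as an average of hub stable sets $T_1,\dots,T_n$ and each $x_{G(K,W)}$ as an average of block stable sets $S^W_1,\dots,S^W_n$ in which every set meets $K\cup\power{\gaA{W}}$ exactly once. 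For each $W$ the multisets of chosen interface nodes on both sides have the same histogram $n\,x$ restricted to $K\cup\power{\gaA{W}}$, so there is a bijection $\sigma_W$ of $\{1,\dots,n\}$ with $T_j$ and $S^W_{\sigma_W(j)}$ agreeing on that clique. Because $K$ is a clique fully adjacent to each $\power{\gaA{W}}$, a hub set $T_j$ either meets $K$ in a single node $k$, and then meets each $K\cup\power{\gaA{W}}$ precisely at $k$, or it misses $K$ and picks a record node in every $\power{\gaA{W}}$; in either case the agreement makes the $K$-part of the glued set well defined and the same in every block. Setting $R_j=(K\cap T_j)\cup\bigcup_{W}(W\cap S^W_{\sigma_W(j)})$, I would check that $R_j\in\stable{G}$ and that $\tfrac1n\sum_j\chi^{R_j}$ restricts to $x_G$.

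The step I expect to be the main obstacle is verifying that each glued $R_j$ is stable \emph{across} blocks, and this is exactly where the edge rule of $\connect{G}(K,\cW)$ is needed. When $T_j$ meets $K$, the block sets carry no boundary vertices and the claim is immediate. Otherwise an edge of $G$ between $b\in W\cap R_j$ and $b'\in W'\cap R_j$ would have $b,b'$ boundary vertices lying in $\bigcup\cX$ and $\bigcup\cX'$, where $\recordnode{\cX},\recordnode{\cX'}$ are the record nodes chosen by $S^W_{\sigma_W(j)}$ and $S^{W'}_{\sigma_{W'}(j)}$, equivalently by $T_j$; the edge forces $\bigcup\cX$ and $\bigcup\cX'$ adjacent, hence $\recordnode{\cX}\recordnode{\cX'}$ an edge of the hub, contradicting stability of $T_j$. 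The remaining coordinate count for $v\in W$ uses that $\sigma_W$ is a bijection, and the $K$-coordinates follow from the case distinction above; these are routine.
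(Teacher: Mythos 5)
Your proof is correct, and its combinatorial core (the case split on whether a stable set meets $K$, the record nodes $\recordnode{\cX_W}$ as interface summaries, and the cross-block stability check via the edge rule of $\connect{G}(K,\cW)$) coincides with the paper's, but the polyhedral bookkeeping is genuinely different. You generalize the fractional-matching argument of Theorem~\ref{composesystem} directly to the star of blocks: you rationalize $x$, decompose the hub and each block into $n$ stable sets, observe (correctly) that tightness of each clique $K\cup\power{\gaA{W}}$ transfers from the hub to the block because those coordinates are shared, and synchronize each block against the hub by a bijection $\sigma_W$ before gluing. The paper avoids all of this: it builds one auxiliary graph $H$ on $V\cup(\bigcup\{\power{\gaA{W}}:W\in\cW\})$ whose edge set is the union of the edge sets of $\connect{G}(K,\cW)$ and of all the $G(K,W)$, notes that every member of $\cL$ is a clique cutset of $H$, and invokes Corollary~\ref{cliquecut} so that condition~(\ref{extamalform}) is precisely membership in $\pol{H,\cL}$; the entire convexity argument is thereby delegated to Chv\'atal's composition, and what remains is the purely integral claim that $S\mapsto S_G$ maps $\stable{H,\cL}$ onto $\stable{G}$, verified by the same two constructions you give. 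What each approach buys: the paper's route is shorter and needs no common denominators or matchings, since the clique cutset machinery handles all blocks at once; your route is self-contained and makes explicit two points the paper uses silently, namely the dichotomy that classes $\cY\in\gaA{W}$ and $\cY'\in\gaA{W'}$ with $W\neq W'$ are either fully adjacent or nonadjacent in $G$ (this is exactly what justifies the paper's unproved assertion that $S'\cup\{\recordnode{\cX_W}:W\in\cW\}$ is stable in $H$), and the observation that a hub stable set meeting $K$ forces the glued blocks to carry no boundary vertices, so that all cross-block edges are automatically excluded.
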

\begin{proof}
Let $H$ be the graph
defined as follows:
the node set of $H$ is $V\cup (\bigcup \{\power{\gaA{W}}:W\in \cW\})$
and the edge set of $H$ is
the union of the edge set of $\connect{G}(K, \cW)$
with the edge sets of all graphs $G(K,W)$ with $W\in \cW$. 

Since each member of $\cL$ is a clique cutset of $H$, it
follows from Corollary~\ref{cliquecut} that $x\in \pol{H,\cL}$ if and only if
$x$ satisfies (\ref{extamalform}).
Hence, it suffices to prove that $\pol{G}=\{x_G:x\in \pol{H,\cL}\}$.
For that it suffices to prove that the function
$S\mapsto S_G$ maps $\stable{H,\cL}$ onto
$\stable{G}$.

First consider $S\in \stable{H,\cL}$. We prove that $S_G\in \stable{G}$.
If $S\cap K\neq \emptyset$, then 
$S\subseteq V\sm \bigcup\{\buur{G}{W}:W\in \cW\}$,
so $S\in \stable{G}$.
Hence we may assume that $S\cap K = \emptyset$.
Then there exists, for each $W \in \cW$,
a collection $\cX_W\subseteq \gaA{W}$ with 
$S\cap \power{\gaA{W}}=\{\recordnode{\cX_W}\}$.
Since $S$ is a stable set in $H$, we have that
$S\cap \buur{G-K}{W} \subseteq \buur{G-K}{W}\sm \neighbor{H}{\recordnode{\cX_W}}=
\bigcup \cX_W$.
Now consider $W, W'\in\cW$ with $W'\neq W$.
Then in $H$, node
$\recordnode{\cX_W}$ is not adjacent to node $\recordnode{\cX_{W'}}$.
Hence $\bigcup{\cX_W}$ and $\bigcup{\cX_{W'}}$ are not adjacent in
$G$.
From this it follows that $S_G$ is a stable set in $G$, as claimed.

Next consider $S'\in \stable{G}$.
We prove that there exists an $S\in \stable{H,\cL}$ with $S'=S_G$.
If $S'\cap K\neq \emptyset$, we just take $S=S'$.
Indeed, in that case,
$S'\subseteq V\sm  \bigcup\{\buur{G}{W}:W\in \cW\}$,
so $S' \in \stable{H, \cL}$.
Hence, we may assume $S'\cap K=\emptyset$.
For each $W\in \cW$,
let $\cX_W$ be the members of $\gaA{W}$ that contain an element of $S'$.
Define $S=S'\cup\{\recordnode{\cX_W}:W\in \cW\}$.
Then $S\in \stable{H,\cL}$ and $S_G=S'$, as required.
\end{proof}
\subsubsection*{Amalgams}
If graph $G=(V,E)$ has an amalgam separation $(V_1,A_1,K,A_2,V_2)$, then
$(K,\{V_1\cup A_1,V_2\cup A_2\})$
is a generalized amalgam separation and $K$ is a (possibly empty) clique.
By Theorem~\ref{graph_composition_extendended_formulation},
$\pol{G}$ consists of the restrictions $x_G$ of all vectors
$x\in\R_+^{V \cup\{\recordnode{\{A_1\}},\recordnode{\varnothing_1},
\recordnode{\{A_2\}},\recordnode{\varnothing_2}\}\}}$ with
\begin{eqnarray}
x_{V_1\cup A_1\cup K\cup\{\recordnode{\{A_1\}},\recordnode{\varnothing_1}\}}
    &\in& \pol{G(K,V_1\cup A_1)},\label{left}\\
x_{V_2\cup A_2\cup K\cup\{\recordnode{\{A_2\}},\recordnode{\varnothing_2}\}}
    &\in& \pol{G(K,V_2\cup A_2)},\label{mid}\\
x_{K \cup\{\recordnode{\{A_1\}},\recordnode{\varnothing_1}, \recordnode{\{A_2\}},\recordnode{\varnothing_2}\}}
    &\in& \pol{\connect{G}(K,\{V_1\cup A_1,V_2\cup A_2\}), \cL},\label{right}
\end{eqnarray}
where $\cL$ consists of the two cliques
$K \cup \{\recordnode{\{A_1\}},\recordnode{\varnothing_1}\}$ and
$K \cup \{\recordnode{\{A_2\}},\recordnode{\varnothing_2}\}$.

For $x\in\R_+^{K \cup\{\recordnode{\{A_1\}},\recordnode{\varnothing_1},
\recordnode{\{A_2\}},\recordnode{\varnothing_2}\}}$,
condition (\ref{right}) is equivalent to
\begin{equation*}
x(K) +x_{\recordnode{\{A_1\}}}+x_{\recordnode{\varnothing_1}}= 1,\ \ 
x(K) + x_{\recordnode{\varnothing_2}} + x_{\recordnode{\{A_2\}}} = 1,\ \ 
x(K) +x_{\recordnode{\{A_1\}}}+x_{\recordnode{\{A_2\}}}\leq 1,
\end{equation*}
so, with
\begin{equation}\label{amalproj-equiv}
x_{\recordnode{\{A_1\}}}=1-x(K)-x_{\recordnode{\varnothing_1}},\ \ 
x_{\recordnode{\{A_2\}}}=1-x(K)-x_{\recordnode{\varnothing_2}},\ \ 
x(K) + x_{\recordnode{\varnothing_1}} + x_{\recordnode{\varnothing_2}} \geq  1.
\end{equation}
We now eliminate $x_{\recordnode{\{A_1\}}}$ and $x_{\recordnode{\{A_2\}}}$.
In (\ref{left}), this amounts to deleting 
$\recordnode{\{A_1\}}$ from $G(K,V_1\cup A_1)$,
In (\ref{mid}), this amounts to deleting 
$\recordnode{\{A_2\}}$ from $G(K,V_2\cup A_2)$.
Since
$G(K,V_1\cup A_1)- \recordnode{\{A_1\}}$
and 
$G(K,V_2\cup A_2)- \recordnode{\{A_2\}}$
are the blocks of the amalgam decomposition of $G$, we get the following result.
\begin{theorem}\label{amalgam_extended_formulation} 
If $G_1$ and $G_2$ are the blocks of an amalgam decomposition of $G=(V,E)$
using the amalgam separation $(V_1,A_1,K,A_2,V_2)$,
then the stable set polytope $\pol{G}$ of $G$ satisfies:
\\[.5\baselineskip]
\mbox{}\hfill
$
\pol{G}=\{x_G\:x\in \R^{V\cup\{\recordnode{\varnothing_1},\recordnode{\varnothing_2}\}},\;
x_{G_1}\in \pol{G_1}, \;
x_{G_2}\in \pol{G_2},\;
x(K) +x_{{\recordnode{\varnothing_1}}}+x_{{\recordnode{\varnothing_2}}}\ge 1\}.
$
\hfill\mbox{}
\end{theorem}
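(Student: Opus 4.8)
The plan is to read Theorem~\ref{amalgam_extended_formulation} as the specialization of Theorem~\ref{graph_composition_extendended_formulation} to the generalized amalgam separation $(K,\cW)$ with $\cW=\{V_1\cup A_1,\,V_2\cup A_2\}$, exactly the instance set up just before the statement. First I would record that $(K,\cW)$ really is a generalized amalgam separation with $K$ a clique: $A_1,A_2$ are fully adjacent, $K$ is a clique fully adjacent to $A_1\cup A_2$, and $V_1,V_2$ are shielded from the other side, so each side of $\cW$ meets the rest only through $A_1$ resp.\ $A_2$, which are fully adjacent to $K$. Thus Theorem~\ref{graph_composition_extendended_formulation} applies and produces conditions (\ref{left}), (\ref{mid}), (\ref{right}).

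Next I would pin down the combinatorial data feeding those conditions. The key observation is that $\buur{G-K}{V_i\cup A_i}=A_i$ (because $V_i$ is nonadjacent to the other side), and every node of $A_i$ has the same neighbours outside $V_i\cup A_i$, namely $A_{3-i}\cup K$. Hence $\gaA{V_i\cup A_i}=\{A_i\}$ is a single class and $\power{\gaA{V_i\cup A_i}}$ collapses to the two nodes $\recordnode{\varnothing_i}$ and $\recordnode{\{A_i\}}$. From the construction of $G(K,V_i\cup A_i)$ one then reads off that $\recordnode{\varnothing_i}$ is fully adjacent to $A_i\cup K$ while $\recordnode{\{A_i\}}$ is adjacent only to $K$ and to $\recordnode{\varnothing_i}$, so that deleting $\recordnode{\{A_i\}}$ leaves exactly the amalgam block $G_i$. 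Similarly the connecting graph $\connect{G}(K,\cW)$ is the union of the two cliques in $\cL$ together with the single extra edge $\recordnode{\{A_1\}}\recordnode{\{A_2\}}$ arising from $A_1$ being adjacent to $A_2$; this graph is perfect, so $\pol{\connect{G}(K,\cW),\cL}$ is cut out by its clique inequalities with the two members of $\cL$ at equality.

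With this data the connecting condition (\ref{right}) unfolds into the two clique equalities and the clique inequality displayed just before (\ref{amalproj-equiv}). I would then solve these for $x_{\recordnode{\{A_1\}}}$ and $x_{\recordnode{\{A_2\}}}$, retaining the surviving inequality $x(K)+x_{\recordnode{\varnothing_1}}+x_{\recordnode{\varnothing_2}}\ge 1$, and substitute the two pinned values into (\ref{left}) and (\ref{mid}) to eliminate the variables $x_{\recordnode{\{A_1\}}}$ and $x_{\recordnode{\{A_2\}}}$ entirely. Projecting onto $\R^V$ should then leave precisely the asserted system over $\R^{V\cup\{\recordnode{\varnothing_1},\recordnode{\varnothing_2}\}}$.

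The hard part will be justifying that eliminating the pinned coordinate $x_{\recordnode{\{A_i\}}}$ turns the condition $x\in\pol{G(K,V_i\cup A_i)}$ of (\ref{left})/(\ref{mid}) into the clean condition $x_{G_i}\in\pol{G_i}$. Here the decisive point is that $\recordnode{\{A_i\}}$ is simplicial in $G(K,V_i\cup A_i)$ with closed neighbourhood exactly the clique $K\cup\{\recordnode{\varnothing_i},\recordnode{\{A_i\}}\}\in\cL$; the equality coming from (\ref{right}) forces $x$ onto the face of $\pol{G(K,V_i\cup A_i)}$ where that clique is tight, and I must verify that this face projects, by deleting the $\recordnode{\{A_i\}}$-coordinate, bijectively onto $\pol{G_i}$. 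This is the same mechanism as the clique-record identity (\ref{eq:clique-transf}) underlying Corollary~\ref{cliquecut}: the value $x_{\recordnode{\{A_i\}}}=1-x(K)-x_{\recordnode{\varnothing_i}}$ is exactly the ``empty-set record'' coordinate for the clique $K\cup\{\recordnode{\varnothing_i}\}$, and stable sets meeting that clique once correspond to stable sets of $G_i$ coordinate-preservingly. Once this face-projection is checked, the three conditions assemble into the stated description of $\pol{G}$, with nonnegativity of the record coordinates automatic from membership in $\pol{G_i}$.
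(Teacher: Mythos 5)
Your proposal is correct and follows essentially the same route as the paper: both specialize Theorem~\ref{graph_composition_extendended_formulation} to the generalized amalgam separation $(K,\{V_1\cup A_1,V_2\cup A_2\})$, identify $\gaA{V_i\cup A_i}=\{A_i\}$ so each $\power{\gaA{V_i\cup A_i}}$ has just the two nodes $\recordnode{\varnothing_i},\recordnode{\{A_i\}}$, unfold condition (\ref{right}) into the system preceding (\ref{amalproj-equiv}), and eliminate the pinned coordinates $x_{\recordnode{\{A_i\}}}=1-x(K)-x_{\recordnode{\varnothing_i}}$, observing that deleting $\recordnode{\{A_i\}}$ from $G(K,V_i\cup A_i)$ yields exactly the amalgam block $G_i$. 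Your added justifications (perfection of the connecting graph for unfolding (\ref{right}), and the face-projection argument for the elimination step) are sound fillings-in of details the paper states more tersely.
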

\noindent
If we have original space descriptions for 
$\pol{G_1}$ and $\pol{G_2}$, Theorem~\ref{amalgam_extended_formulation} yields
an extended formulation for $\pol{G}$ with
$x_{\recordnode{\varnothing_1}}$ and $x_{\recordnode{\varnothing_2}}$
as the only extra variables.
With Fourier-Motzkin elimination it easy to remove
$x_{\recordnode{\varnothing_1}}$ and $x_{\recordnode{\varnothing_2}}$
from that extended formulation.
This leads to a new proof of the following result of
Burlet and Fonlupt (see \cite{NS} for an extension).
\begin{corollary}[Burlet and Fonlupt\cite{BF2}] \label{polycap}
Let the stable set polytopes of the blocks of an amalgam
decomposition of $G$ be described by the following systems:
\begin{equation}\label{burletfonlupt1}
x\geq 0,\ \ D^1x \leq \delta^1,\ \ x_{\recordnode{\varnothing_1}}\geq 0, \ \ 
\mbox{and}\ \ 
x_{\recordnode{\varnothing_1}} + c^{1,i}x \leq\gamma^{1,i}\ \ (i=1,\ldots,n_1),
\end{equation}
and 
\begin{equation}\label{burletfonlupt2}
x\geq  0,\ \ D^2x\leq \delta^2,\ \ x_{\recordnode{\varnothing_1}}\geq 0, \ \ 
\mbox{and}\ \ 
x_{\recordnode{\varnothing_2}} + c^{2,i}x\leq\gamma^{2,i}\ \ (i=1,\ldots,n_2)
\end{equation}
where $\recordnode{\varnothing_1}$ and $\recordnode{\varnothing_2}$ are
the nodes that are not in $G$.
Then $\pol{G}$ is given by the following system:
\begin{eqnarray}\label{sysblockamal}
x\geq 0,\ \
D^1x \leq  \delta^1,\ \ 
D^2 \leq  \delta^2,&&\label{burletfonlupt3}\\[.3\baselineskip]
\left[c^{1,i}+ c^{2,j}\right]x-x(K)\leq  \gamma^{1,i}+\gamma^{2,j}-1
&& (i=1,\ldots,n_1,\ j=1,\ldots,n_2).\label{burletfonlupt4}
\end{eqnarray}
where $K$ is the clique in the amalgam separation.
\end{corollary}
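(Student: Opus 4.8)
The plan is to start from the extended formulation for $\pol{G}$ given by Theorem~\ref{amalgam_extended_formulation} and eliminate the two auxiliary variables $x_{\recordnode{\varnothing_1}}$ and $x_{\recordnode{\varnothing_2}}$ by Fourier--Motzkin. First I would substitute the hypothesized systems (\ref{burletfonlupt1}) and (\ref{burletfonlupt2}) for $\pol{G_1}$ and $\pol{G_2}$ into the three conditions ``$x_{G_1}\in \pol{G_1}$'', ``$x_{G_2}\in \pol{G_2}$'', and the linking inequality $x(K)+x_{\recordnode{\varnothing_1}}+x_{\recordnode{\varnothing_2}}\ge 1$. This gives one big system over the variables of $G$ together with $x_{\recordnode{\varnothing_1}}$ and $x_{\recordnode{\varnothing_2}}$, and the task is to project out the latter two.

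The key observation that makes the elimination clean is that each of $x_{\recordnode{\varnothing_1}}$ and $x_{\recordnode{\varnothing_2}}$ appears in a very restricted way. The variable $x_{\recordnode{\varnothing_1}}$ occurs only in the inequalities $x_{\recordnode{\varnothing_1}}\ge 0$ and $x_{\recordnode{\varnothing_1}}+c^{1,i}x\le \gamma^{1,i}$ coming from the first block (where it always has coefficient $+1$), and in the linking inequality $x(K)+x_{\recordnode{\varnothing_1}}+x_{\recordnode{\varnothing_2}}\ge 1$ (again coefficient $+1$); symmetrically for $x_{\recordnode{\varnothing_2}}$. So I would eliminate $x_{\recordnode{\varnothing_1}}$ first: its upper bounds are $x_{\recordnode{\varnothing_1}}\le \gamma^{1,i}-c^{1,i}x$, and its only lower bound is the rearranged linking inequality $x_{\recordnode{\varnothing_1}}\ge 1-x(K)-x_{\recordnode{\varnothing_2}}$ (the constraint $x_{\recordnode{\varnothing_1}}\ge 0$ is a weaker lower bound, as will be seen, and does not produce a binding combination with the block inequalities). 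Pairing each upper bound with this lower bound yields, for each $i$,
\begin{equation*}
1-x(K)-x_{\recordnode{\varnothing_2}}\le \gamma^{1,i}-c^{1,i}x,
\quad\text{i.e.}\quad
x_{\recordnode{\varnothing_2}}\ge 1-x(K)-\gamma^{1,i}+c^{1,i}x.
\end{equation*}
Then I would eliminate $x_{\recordnode{\varnothing_2}}$, whose upper bounds are $x_{\recordnode{\varnothing_2}}\le \gamma^{2,j}-c^{2,j}x$ and whose relevant lower bounds are exactly the $n_1$ inequalities just produced. Pairing each such lower bound (index $i$) with each upper bound (index $j$) gives precisely
\begin{equation*}
1-x(K)-\gamma^{1,i}+c^{1,i}x\le \gamma^{2,j}-c^{2,j}x,
\end{equation*}
which rearranges to $\left[c^{1,i}+c^{2,j}\right]x-x(K)\le \gamma^{1,i}+\gamma^{2,j}-1$, exactly inequality (\ref{burletfonlupt4}). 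The surviving block inequalities $x\ge 0$, $D^1x\le\delta^1$, $D^2x\le\delta^2$ are those not involving the eliminated variables, giving (\ref{burletfonlupt3}).

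The main obstacle---really the only point requiring care---is to check that no \emph{other} pairings in the Fourier--Motzkin elimination produce inequalities beyond (\ref{burletfonlupt3}) and (\ref{burletfonlupt4}), i.e.\ that the remaining pairings are redundant. Specifically, when eliminating $x_{\recordnode{\varnothing_1}}$ one must also pair the trivial lower bound $x_{\recordnode{\varnothing_1}}\ge 0$ with each upper bound $x_{\recordnode{\varnothing_1}}\le\gamma^{1,i}-c^{1,i}x$, yielding $c^{1,i}x\le\gamma^{1,i}$; and similarly the bound $x_{\recordnode{\varnothing_2}}\ge 0$ against the upper bounds of $x_{\recordnode{\varnothing_2}}$. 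I would argue these are implied by the final system: the inequality $c^{1,i}x\le\gamma^{1,i}$ is obtained from (\ref{burletfonlupt4}) by adding the block-2 inequality $c^{2,j}x\le\gamma^{2,j}-x_{\recordnode{\varnothing_2}}$ at $x_{\recordnode{\varnothing_2}}=0$ together with $x(K)\le 1$ (a clique inequality, valid since $K$ is a clique and available from $D^2x\le\delta^2$), so such pairings add nothing new. Since Fourier--Motzkin elimination is an exact projection, once redundancy of the extraneous pairings is verified, the resulting system (\ref{burletfonlupt3})--(\ref{burletfonlupt4}) describes exactly the projection of the extended formulation of Theorem~\ref{amalgam_extended_formulation} onto the $x$-space, which by that theorem equals $\pol{G}$.
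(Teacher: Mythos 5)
Your overall route is the paper's: start from Theorem~\ref{amalgam_extended_formulation}, substitute (\ref{burletfonlupt1})--(\ref{burletfonlupt2}), and project out $x_{\recordnode{\varnothing_1}}$ and $x_{\recordnode{\varnothing_2}}$ by Fourier--Motzkin; the pairings of the linking inequality $x(K)+x_{\recordnode{\varnothing_1}}+x_{\recordnode{\varnothing_2}}\ge 1$ with the block upper bounds do produce exactly (\ref{burletfonlupt4}). However, the step you yourself single out as ``the only point requiring care'' is handled with an invalid derivation. To discard the Fourier--Motzkin products coming from $x_{\recordnode{\varnothing_1}}\ge 0$, you claim that $c^{1,i}x\le\gamma^{1,i}$ follows from (\ref{burletfonlupt4}) by \emph{adding} $c^{2,j}x\le\gamma^{2,j}$ and $x(K)\le 1$. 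But (\ref{burletfonlupt4}) bounds $c^{1,i}x$ above by $\gamma^{1,i}+\gamma^{2,j}-1+x(K)-c^{2,j}x$, so to conclude $c^{1,i}x\le\gamma^{1,i}$ you would need a \emph{lower} bound of the form $c^{2,j}x\ge \gamma^{2,j}-1+x(K)$ for some $j$; the block inequalities only give upper bounds on $c^{2,j}x$, and adding them together with $x(K)\le 1$ yields only the weaker $c^{1,i}x+2c^{2,j}x\le\gamma^{1,i}+2\gamma^{2,j}$. So as written the redundancy of the extraneous pairings is not established, and the projection argument is incomplete. (A secondary imprecision: $x(K)\le 1$ need not be literally ``available from $D^2x\le\delta^2$''; it is only \emph{implied} by (\ref{burletfonlupt2}), being valid for $\pol{G_2}$.)

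The paper closes precisely this gap by disposing of the nonnegativity bounds \emph{before} eliminating, instead of arguing their products away afterwards. In the block $G_1$, the node $\recordnode{\varnothing_1}$ is fully adjacent to $K$, so $K\cup\{\recordnode{\varnothing_1}\}$ is a clique of $G_1$; hence $x(K)+x_{\recordnode{\varnothing_1}}\le 1$ is valid for $\pol{G_1}$ and therefore implied by (\ref{burletfonlupt1}), since that system is assumed to describe $\pol{G_1}$. Subtracting this from the linking inequality shows that $x_{\recordnode{\varnothing_2}}\ge 0$ is redundant in the combined system, and symmetrically $x_{\recordnode{\varnothing_1}}\ge 0$ is redundant using (\ref{burletfonlupt2}). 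After deleting both, each variable to be eliminated has exactly one lower bound (the linking inequality) and only the block upper bounds, so Fourier--Motzkin outputs exactly (\ref{burletfonlupt3}) and (\ref{burletfonlupt4}) with no extra products to dispose of. Replacing your redundancy argument by this observation turns your proposal into the paper's proof.
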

\begin{proof}
Let $G_1$ and $G_2$ be the blocks of the amalgam decomposition,
where (\ref{burletfonlupt1}) describes $\pol{G_1}$
and (\ref{burletfonlupt2}) describes $\pol{G_2}$.
By Theorem~\ref{amalgam_extended_formulation},
$\pol{G}$ consists of all $x$ for which there exists
$x_{\recordnode{\varnothing_1}}$ and
$x_{\recordnode{\varnothing_2}}$ such that
$(x, x_{\recordnode{\varnothing_1}},x_{\recordnode{\varnothing_2}})$
satisfies
 (\ref{burletfonlupt1}), (\ref{burletfonlupt2}), and
\begin{equation}\label{up}
x(K)+x_{\recordnode{\varnothing_1}}+x_{\recordnode{\varnothing_2}}\ge 1.
\end{equation}
Since  (\ref{burletfonlupt1}) describes $\pol{G_1}$,
we get that (\ref{burletfonlupt1})
implies ``$x(K)+x_{\recordnode{\varnothing_1}}\le 1$''.
Subtracting that inequality from
(\ref{up}),
yields ``$x_{\recordnode{\varnothing_2}}\geq 0$''.
In other words: the constraint ``$x_{\recordnode{\varnothing_2}}\geq 0$''
is redundant in the
system of linear inequalities given by
(\ref{burletfonlupt1}), (\ref{burletfonlupt2}) and (\ref{up}).
By symmetry, the same applies to ``$x_{\recordnode{\varnothing_1}}\geq 0$''.
So the system of linear inequalities given by
(\ref{burletfonlupt1}), (\ref{burletfonlupt2}) and (\ref{up}),
 is equivalent to the system consisting of
(\ref{burletfonlupt3}) together with:
\begin{eqnarray}
&x_{\recordnode{\varnothing_1}}&\le \gamma^{1,i} -c^{1,i}x\ \  (i=1,\ldots, n_1)\label{fm1}\\
&x_{\recordnode{\varnothing_2}}&\le \gamma^{2,j} -c^{2,j}x\ \  (j=1,\ldots, n_2)\label{fm2}\\
1-x(K)-x_{\recordnode{\varnothing_1}}\le& x_{\recordnode{\varnothing_2}}&\label{fm3}
\end{eqnarray}
Eliminating $x_{\recordnode{\varnothing_2}}$, replaces (\ref{fm1})-(\ref{fm3}) by
\begin{eqnarray}
&x_{\recordnode{\varnothing_1}}&\le \gamma^{1,i} -c^{1,i}x\ \  (i=1,\ldots, n_1)\label{fm4}\\
1-x(K)+ c^{2,j}x- \gamma^{2,j} \le& x_{\recordnode{\varnothing_1}}&
\hspace{2.25cm} (j=1,\ldots, n_2)\label{fm5}
\end{eqnarray}
Eliminating $x_{\recordnode{\varnothing_1}}$, replaces (\ref{fm4}) and (\ref{fm5}) by
(\ref{burletfonlupt4}).
\end{proof}
\subsection{Proof of Theorem 2}
\begin{lemma}\label{hole-template-pol}
Stable set polytopes of record graphs of 
fan-templates 
have compact extended formulations
that can be constructed in polynomial time.
\end{lemma}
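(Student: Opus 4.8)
The plan is to exhibit the record graph $H=G(\{Z\}\cup\Omega)$ of the good fan-template $(G,Z,\Omega)$ as a family of constant-size graphs glued along $2$-node clique cutsets, so that I can combine their (trivially compact) stable set descriptions by iterating the clique-cutset composition rule of Corollary~\ref{cliquecut}. The one feature that blocks a direct appeal to Lemma~\ref{zipfan} is the fan-center $c$, which has large degree, together with the root-record $\record Z$, which lives on the base $(u,c,v)$ and may be attached to both ends $u,v$ of the path. I would clear both obstructions at once by conditioning a stable set $S$ on its intersection with the base $\{u,c,v\}$.

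First I would condition on the base. The induced graph $G_{\{u,c,v\}}$ has at most five stable sets $T$, and fixing $S\cap\{u,c,v\}=T$ determines the node $\recordnode{T\cap Z}$ of $\record Z$ that belongs to $S$ and forbids all the others. Hence the face $\{x\in\pol H:\ x_w=1\,(w\in T),\ x_w=0\,(w\in\{u,c,v\}\sm T)\}$ is, after discarding the fixed coordinates, the stable set polytope $\pol{H_T}$ of the graph $H_T$ obtained from $H$ by deleting the base $\{u,c,v\}$, the record $\record Z$, and all remaining neighbours $\neighbor{H}{T}$ of the forced-in nodes. By the extended-formulation rule for a convex hull of polytopes~(\ref{balas}), a compact extended formulation for $\pol H$ then follows from compact extended formulations for these at most five polytopes $\pol{H_T}$.

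Next I would handle each $\pol{H_T}$ separately. Since $H_T$ has lost the center $c$, it is a subgraph of the path $G-c$ carrying only records of $\Omega$; as every region of a good fan-template induces a $3$-node subpath of a hole, the argument of Lemma~\ref{zipfan} still applies (deleting further nodes only fragments the path and shrinks records), and shows that $H_T$ decomposes along $2$-node clique cutsets into a list of at most $|\Omega|+n$ graphs, each on at most $8$ nodes. For a graph on a bounded number of nodes the complete description of its stable set polytope has bounded size and is obtained by enumeration. Iterating Corollary~\ref{cliquecut} along these $2$-node cutsets assembles these bounded-size descriptions into an extended formulation for $\pol{H_T}$ of size $O(|\Omega|+n)$, constructed in polynomial time.

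The hard part is precisely the bookkeeping around $c$ and $Z$: naively removing $c$ as in Lemma~\ref{zipfan} would leave the records of regions through $c$, and the record $\record Z$ attached to far-apart path nodes, so that the clique-cutset decomposition is destroyed. Conditioning on the constant-size base is what repairs this. When $c\in T$ all neighbours of $c$ are excluded, so every region meeting $c$ has its record forced; when $c\notin T$ the only possibly non-local record, $\record Z$, is pinned down by the fixed values on $\{u,v\}$ and deleted. In either case what remains is a center-free path with purely local records, to which Lemma~\ref{zipfan} and Corollary~\ref{cliquecut} apply, and the only blow-up from conditioning is the constant factor $|\stable{G_{\{u,c,v\}}}|\le 5$.
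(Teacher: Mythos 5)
Your overall architecture is the paper's own: the paper likewise writes $\pol{H}$ as a Balas convex hull of constantly many faces and decomposes each piece along $2$-node clique cutsets via Lemma~\ref{zipfan} and Corollary~\ref{cliquecut}; it merely conditions on $c$ alone (the two pieces $\pol{H-c}$ and the face $x_c=1$, affinely $\pol{H-\neighbor{H}{c}-c}$) rather than on all of $\{u,c,v\}$. Your finer disjunction into at most five cases is legitimate, and your motive for it---absorbing the root record $\record{Z}$, which Lemma~\ref{zipfan} (stated for $G(\Omega)-c$, without a $Z$-record) does not cover---addresses a point the paper leaves implicit.

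However, one step is false as written, and it recurs throughout: fixing $S\cap\{u,c,v\}=T$ does \emph{not} determine $S\cap\record{Z}$. In the plain stable set polytope of a record graph a stable set need not meet the record clique at all, and it may contain any $\recordnode{T'}$ with $T'\supseteq T\cap Z$ (for instance $\{\recordnode{\emptyset}\}$ by itself is a stable set of $H$). The forcing you invoke is exactly the characterization of the face $\pol{H,\cL(Z)}$ in Lemma~\ref{lift}, where the cliques of $\cL(Z)$ are tight; and an extended formulation for that face does not yield one for $\pol{H}$ (only the reverse direction, adding the tight equations, is automatic). Consequently your face $F_T=\{x\in\pol{H}:x_w=1\,(w\in T),\ x_w=0\,(w\in\{u,c,v\}\sm T)\}$ is \emph{not} $\pol{H_T}$ with $\record{Z}$ deleted: the coordinates on the surviving record nodes are not fixed on $F_T$, so they can be neither ``discarded'' (Balas's formula~(\ref{balas}) needs all pieces in the common ambient space $\R^{V(H)}$) nor set to zero (then the convex hull of your pieces would miss every stable set meeting $\record{Z}$ and describe a proper subpolytope of $\pol{H}$). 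The same slip appears in ``every region meeting $c$ has its record forced.'' The repair is short: the nodes $\recordnode{T'}$ with $T'\not\supseteq T\cap Z$ lie in $\neighbor{H}{T}$ and are correctly deleted, while the surviving ones have all their neighbours outside $\record{Z}$ inside the deleted base, hence form an isolated clique in $H_T$; keep that clique as one more trivial block of the clique-cutset decomposition (its stable set polytope is a simplex), and with $F_T\cong\pol{H_T}$ so corrected your conditioning-plus-Balas argument goes through and matches the paper's proof in all remaining respects.
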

\begin{proof} 
Let $H$ be the record graph of a fan-template with base $(u,c,v)$.
Then $\pol{H}$ is the convex hull of $\pol{H-c}$ and of
a face of the convex hull of the characteristic vector of $\{c\}$
and $\pol{H-\neighbor{H}{c}-c}$.
Since, by Lemma~\ref{zipfan}, the graphs $H-c$ and $H-\neighbor{H}{c}-c$ are decomposable
by 2-node clique sets into a list of at most $|V|$ graphs, each with at most 8 nodes,
the lemma follows from Corollary~\ref{cliquecut} and Balas's formula (\ref{balas}).
\end{proof}
\begin{theorem}
The stable set polytopes of cap-free odd-signable graphs have a compact
extended formulation that can be constructed in polynomial time.
\end{theorem}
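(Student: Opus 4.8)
The plan is to transcribe the proof of Theorem~\ref{thm2gen} into the extended-formulation setting, replacing each algorithmic composition tool by its polytope counterpart: Theorem~\ref{th:templ-extform} plays the role of Theorem~\ref{th:templ-stablesets}, Corollary~\ref{cliquecut} that of Corollary~\ref{cliquerecursion}, Theorem~\ref{amalgam_extended_formulation} that of the amalgam recursion, and Lemma~\ref{hole-template-pol} that of Lemma~\ref{lem2gen}; the structure theorems~\ref{capdec} and~\ref{tf} are used verbatim. First I would dispose of the two atomic cases. If $G$ has a node $u$ adjacent to all others, then every stable set is either $\{u\}$ or avoids $u$, so $\pol{G}$ is the convex hull of the single point corresponding to $\{u\}$ and of the embedding of $\pol{G-u}$; as $G-u$ is again cap-free and odd-signable, Balas's formula~(\ref{balas}) turns a compact formulation for $\pol{G-u}$ into one for $\pol{G}$ at the cost of $O(1)$ extra variables and inequalities. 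For the cube, a single fixed graph on eight nodes, I would simply record an explicit constant-size description of $\pol{\mathrm{cube}}$.

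Next I would build the decomposition-list. For a cap-free odd-signable $G$ with no universal node and distinct from the cube, Theorem~\ref{capdec} followed by Theorem~\ref{tf} produces in polynomial time a decomposition-list $\{(G_1,Z_1,\Omega_1),\ldots,(G_k,Z_k,\Omega_k)\}$ of $(G,\varnothing,\varnothing)$ whose pieces are good fan-templates, cubes, and near-cliques---precisely the list underlying the proof of Theorem~\ref{thm2gen}. Since amalgams are node cutset separations (Lemma~\ref{amalgamisonelin}) and clique cutsets are a special case, this is a genuine template decomposition-list, so by Lemma~\ref{graphcount} its length $k$ is $O(n^2)$.

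For each piece I would then exhibit a compact formulation of the record graph $\pol{G_i(\{Z_i\}\cup\Omega_i)}$: for good fan-templates this is exactly Lemma~\ref{hole-template-pol}, while the remaining pieces---cubes, near-cliques, and the other base graphs of the recursion---admit compact formulations directly, being either fixed graphs or graphs with only polynomially many stable sets. Feeding these into Theorem~\ref{th:templ-extform} yields an extended formulation for $\pol{G}$ of size $O(k)+m_1+\cdots+m_k$; as $k$ and every $m_i$ are polynomial, the formulation is compact, and each ingredient---locating the separations, constructing the piece formulations, and gluing them through the record variables of Theorem~\ref{composesystem}---is carried out in polynomial time.

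The step I expect to carry the real content is keeping all records polynomial. Every application of Theorem~\ref{composesystem} introduces the coordinates $\R^{\record{U}}$, one per stable set of the cutset, so compactness hinges on $|\stable{G_U}|$, and likewise on the root-records $\record{Z_i}$, staying polynomial at each composition. This is the polytope incarnation of the $\alpha(\lazy{\cU})=1$ phenomenon exploited on the algorithmic side: amalgam separations deliver a \emph{clique} servant-root $\lazy{\cU}$ (Lemma~\ref{amalgamisonelin}), clique cutsets are cliques by definition, the roots $Z_i$ are cliques, and the fan-template regions are $3$-node subpaths with only five stable sets---so every record arising in the construction has at most linearly many nodes. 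Granting the earlier results, checking this uniform boundedness is the one place where something genuinely must be verified; the remainder is assembly parallel to Theorem~\ref{thm2gen}.
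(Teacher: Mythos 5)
There is a genuine gap, and it sits exactly where you flag the ``real content''. You glue \emph{all} pieces, including the amalgam steps, through Theorem~\ref{th:templ-extform}, i.e.\ through the record composition of Theorem~\ref{composesystem}, and you argue compactness because ``amalgam separations deliver a clique servant-root $\lazy{\cU}$''. This conflates the contracted root $\lazy{\cU}$ with the cutset $U$ itself. Theorem~\ref{composesystem} has no grouping mechanism: it introduces one coordinate per node of $\record{U}$, that is, one per stable set of $G_U$, on both sides of the cut. For an amalgam cutset $U=A_1\cup K$ the transversal $\lazy{\cU}$ is indeed a clique, but $\record{U}$ has $|\stable{G_{A_1}}|+|K|+1$ nodes, and nothing in the amalgam definition bounds $|\stable{G_{A_1}}|$. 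Concretely, take the tree with a center $c$ adjacent to leaves $a_1,\dots,a_t$ and to one pendant node $d$: it is cap-free and odd-signable, and $(\emptyset,\{a_1,\dots,a_t\},\emptyset,\{c\},\{d\})$ is an amalgam separation whose $A_1$ is a stable set of size $t$, so the record at this cutset has more than $2^t$ nodes. The clique-ness of $\lazy{\cU}$ is what rescues the \emph{algorithmic} servant, via the map $\Hom{\cU}$; its polyhedral counterpart --- contracting the group $A_1$ before composing --- is exactly the clique-lift/generalized-amalgam machinery of Theorem~\ref{graph_composition_extendended_formulation}, which your construction never invokes. (A secondary mismatch: amalgam block decompositions are $1$-linearized, not plain template decompositions, so the quadratic length bound should come from Lemma~\ref{quadratic} rather than Lemma~\ref{graphcount}.)

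Ironically, your opening sentence names the right tool --- Theorem~\ref{amalgam_extended_formulation} as the counterpart of the amalgam recursion --- but your actual construction in the second and third paragraphs never uses it. The paper keeps the gluing type-separated: amalgam steps are composed by Theorem~\ref{amalgam_extended_formulation}, which costs only the two variables $x_{\recordnode{\varnothing_1}},x_{\recordnode{\varnothing_2}}$ and one extra inequality per step; clique cutsets are composed by Corollary~\ref{cliquecut} with no extra variables at all; the number of these steps is at most $|V|^2$ by $1$-linearizability (Lemma~\ref{quadratic}); and the record machinery of Theorems~\ref{composesystem} and~\ref{th:templ-extform} is reserved for the fan phase of Theorem~\ref{tf}, where every cutset is a $3$-node path and every record has five nodes. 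In that phase your argument is correct, as are your treatments of universal nodes via Balas's formula, of the cube, and of fan-templates via Lemma~\ref{hole-template-pol}. So the repair requires no new idea, only actually deploying the amalgam composition theorem where your plan promised it --- but as written, the uniform record bound you claim is false, and with it the proof collapses at every amalgam step.
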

\begin{proof}
When graph $G$ has as a node $u$ adjacent to all other nodes,
$\pol{G}$ is the convex hull of the characteristic vector of $\{u\}$ and $\pol{G-u}$.
Hence in that case it follows from (\ref{balas}), that
 $\pol{G}$ has an extended formulation with only three more variables
than any such formulation for $\pol{G-u}$.
Recall from Section~\ref{cliqueroot},
that clique cutset separations and amalgam separations are $1$-linearizable and that
a $1$-linearized decomposition-list of a graph $G=(V,E)$ can have at most $|V|^2$ members.
Hence, by Lemma \ref{hole-template-pol},
the result follows from the decomposition results Theorem \ref{capdec},
\ref{tf}  and
the polyhedral  composition results Corollary~\ref{cliquecut} and
Theorems \ref{th:templ-extform} and \ref{amalgam_extended_formulation}.
\end{proof}

\end{document}